\def\date{12 Jan 2015. Revised 21 May 2017}
\newtheorem{proposition}{Proposition}[section]
\newtheorem{lemma}[proposition]{Lemma}
\newtheorem{corollary}[proposition]{Corollary}
\newtheorem{theorem}[proposition]{Theorem}
\newtheorem{claim}[proposition]{Claim}
\newtheorem{conjecture}[proposition]{Conjecture}
\newtheorem{thm}[proposition]{Theorem}
\theoremstyle{definition}
\newtheorem{definition}[proposition]{Definition}
\begin{document}
\font\smallrm=cmr8




\baselineskip=12pt
\phantom{a}\vskip .25in
\centerline{{\bf  On the Minimum Number of Edges in Triangle-Free 5-Critical Graphs}}
\vskip.4in
\centerline{{\bf Luke Postle}
\footnote{\texttt{lpostle@uwaterloo.ca. Partially supported by NSERC under Discovery Grant No. 2014-06162}}} 
\smallskip
\centerline{Department of Combinatorics and Optimization}
\centerline{University of Waterloo}
\centerline{Waterloo, Ontario, Canada N2L 3G1}

\vskip 1in \centerline{\bf ABSTRACT}
\bigskip

{
\parshape=1.0truein 5.5truein
\noindent

Kostochka and Yancey proved that every $5$-critical graph $G$ satisfies: $|E(G)|\ge \frac{9}{4} |V(G)| - \frac{5}{4}$. A construction of Ore gives an infinite family of graphs meeting this bound.

We prove that there exists $\epsilon, \delta > 0$ such that if $G$ is a $5$-critical graph, then $|E(G)| \ge (\frac{9}{4}+\epsilon)|V(G)|-\frac{5}{4} - \delta T(G)$ where $T(G)$ is the maximum number of vertex-disjoint cliques of size three or four where cliques of size four have twice the weight of a clique of size three. As a corollary, a triangle-free $5$-critical graph $G$ satisfies: $|E(G)|\ge  (\frac{9}{4} + \epsilon)|V(G)| - \frac{5}{4}$.
}

\vfill \baselineskip 11pt \noindent \date.
\vfil\eject
\baselineskip 18pt

\section{Introduction}
A graph $G$ is \emph{$k$-critical} if $G$ is not $(k-1)$-colorable but every proper subgraph of $G$ is $(k-1)$-colorable. Since the minimum degree of $k$-critical graph is at least $k-1$, it follows trivially that the number of edges in a $k$-critical graph $G$ is at least $\frac{k-1}{2}|V(G)|$. In a recent landmark paper, Kostochka and Yancey~\cite{KY2} improved this lower bound as follows.

\begin{theorem}[Kostochka, Yancey \cite{KY1}] \label{OreK}
If $G$ is a $k$-critical graph, then $$|E(G)|\ge \left(\frac{k}{2}-\frac{1}{k-1}\right)|V(G)| - \frac{k(k-3)}{2(k-1)}.$$
\end{theorem}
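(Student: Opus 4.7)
The plan is to use the potential method. Define
\[
\rho(H) = (k-2)(k+1)|V(H)| - 2(k-1)|E(H)|,
\]
so that the theorem is equivalent to $\rho(G) \le k(k-3)$ for every $k$-critical $G$. One checks at once that $\rho(K_k)=k(k-3)$, so the bound is sharp on complete graphs. Suppose the theorem fails, and take a $k$-critical counterexample $G$ minimizing $|V(G)|$; then $\rho(G) \ge k(k-3)+1$ and $G\neq K_k$.

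The key device is a \emph{potential lemma}: for every proper subset $W\subsetneq V(G)$ with $|W|\ge 2$, a lower bound of the form $\rho(G[W])\ge \rho(K_{k-1}) = 2(k-1)(k-2)$ holds, up to finitely many small exceptional configurations. I would prove this by contradiction. Given a $W$ that violates the bound, use the $k$-criticality of $G$ to obtain a $(k-1)$-coloring $\varphi$ of $G[W]$; this imposes list-coloring constraints on the vertices of $V(G)\setminus W$ adjacent to $W$. One then constructs a strictly smaller $k$-critical graph $G'$ by replacing $W$ with a gadget that faithfully encodes these constraints, applies the induction hypothesis to $G'$ to obtain $\rho(G')\le k(k-3)$, and derives that $\varphi$ can be extended to a $(k-1)$-coloring of all of $G$, contradicting $k$-criticality.

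With the potential lemma in hand, the structural implications are strong. For instance, every vertex of degree $k-1$ in $G$ is forbidden: applying the lemma to $W=V(G)\setminus\{v\}$ and computing $\rho(G[W])=\rho(G)+k^2-3k+4$ quickly produces a numerical contradiction with $\rho(G)\ge k(k-3)+1$. Similar arguments forbid small separating cliques and bound the density locally near every vertex. A discharging or direct counting argument, starting from the initial charge $\deg(v)$ and redistributing along the forbidden configurations, then forces $2|E(G)|$ to strictly exceed $\frac{(k-2)(k+1)}{k-1}|V(G)| - \frac{k(k-3)}{k-1}$, contradicting the assumption $\rho(G)>k(k-3)$.

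The main obstacle will be the potential lemma, and in particular the construction of the gadget that replaces $W$. The gadget must simultaneously (i) shrink the vertex count so induction is available, (ii) produce a graph that is itself $k$-critical, not merely $k$-chromatic, and (iii) transmit the list-coloring constraints on the cut vertices so that extending a $(k-1)$-coloring of $G'$ back across the cut yields a valid $(k-1)$-coloring of $G$. Handling the boundary cases where $G[W]$ is itself close to $K_k$, or $G[V(G)\setminus W]$ has very few edges, is the technical heart of the argument and requires a careful case analysis tailored to the exact form of $\rho$.
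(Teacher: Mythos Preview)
The paper does not prove this theorem; it is quoted from Kostochka and Yancey \cite{KY1} and used as input. So there is no proof in the paper to compare against, and your sketch should be measured against the original Kostochka--Yancey argument.

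Your overall framework is the right one: the potential $\rho(H)=(k-2)(k+1)|V(H)|-2(k-1)|E(H)|$, a minimum counterexample, and a potential lemma bounding $\rho(G[W])$ from below for proper $W$. But two concrete steps in your outline are wrong, and one of them is fatal.

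First, your claimed structural consequence that ``every vertex of degree $k-1$ in $G$ is forbidden'' is false, and the argument you give for it does not work. The extremal graphs for this theorem (the $k$-Ore graphs, including $K_k$ itself) are full of degree-$(k-1)$ vertices, so any proof that purported to exclude them in a minimum counterexample would already be suspect. Numerically: for $W=V(G)\setminus\{v\}$ with $\deg(v)=k-1$ you compute $\rho(G[W])=\rho(G)+k^2-3k+4$, which under your hypothesis $\rho(G)\ge k(k-3)+1$ gives $\rho(G[W])\ge 2k^2-6k+5$. This is \emph{larger} than the potential-lemma lower bound $2(k-1)(k-2)=2k^2-6k+4$, so there is no contradiction at all. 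The potential lemma bounds $\rho(G[W])$ from below; a set with large $\rho$ cannot violate it. Since your discharging step is premised on having eliminated low-degree vertices, the whole final paragraph collapses.

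Second, your description of the potential-lemma mechanism is garbled. After building the smaller $k$-critical $G'$ and applying induction to get $\rho(G')\le k(k-3)$, one does \emph{not} ``derive that $\varphi$ can be extended to a $(k-1)$-coloring of $G$.'' The contradiction is purely arithmetic: the construction gives an explicit inequality relating $\rho(G')$, $\rho(G[W])$, and the potential of the gadget, and this forces $\rho(G[W])$ to be large. The actual Kostochka--Yancey gadget is specific: given a $(k-1)$-coloring of $G[W]$, identify each color class to a single vertex and add all edges among the resulting $\le k-1$ vertices. You have left this construction as a black box (``a gadget that faithfully encodes these constraints''), but it is precisely this device, together with tracking how many of the new vertices survive into the $k$-critical subgraph of $G'$, that makes the arithmetic go through. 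Without it the outline is not a proof sketch but a description of a proof strategy.
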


Theorem~\ref{OreK} is tight for all $k$. In particular, $K_k$ satisfies the formula with equality, yet there also exist an infinite family of $k$-critical graphs matching this bound. Theorem~\ref{OreK} confirmed a conjecture of Gallai~\cite{Gallai} on the minimum asymptotic ratio of edges to vertices in a $k$-critical graph and almost proved a conjecture of Ore~\cite{Ore} on the exact lower bound for the number of edges in a $k$-critical graph on a fixed number of vertices. It is natural to wonder whether the lower bound may be improved by restricting to a subclass of $k$-critical graphs. In particular, would excluding certain subgraphs increase the asymptotic ratio of edges to vertices in a $k$-critical graph? 

For $k=4$, Theorem~\ref{OreK} states that a $4$-critical graph $G$ on $n$ vertices has at least $\frac{5n-2}{3}$ edges. A construction of Thomas and Walls~\cite{ThomasWalls} yields an infinite family of $4$-critical triangle-free graphs whose asymptotic ratio of edges to vertices is also $5/3$. In another paper~\cite{Postle}, the author proved the following theorem.

\begin{thm}\label{Girth5}
There exists $\epsilon>0$ such that if $G$ is a $4$-critical graph of girth at least five, then
$$|E(G)|\ge \left(\frac{5}{3}+\epsilon\right)|V(G)|-\frac{2}{3}.$$
\end{thm}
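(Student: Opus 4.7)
My plan is a minimum counterexample argument combined with a list-coloring style reducibility lemma. Let $G$ be a $4$-critical graph of girth at least $5$ minimizing $|V(G)|$ among counterexamples to Theorem~\ref{Girth5}, and for any subgraph $H \subseteq G$ define the potential
\[
\rho(H) = \left(5/3 + \epsilon\right)|V(H)| - |E(H)|.
\]
The conclusion of Theorem~\ref{Girth5} is equivalent to $\rho(G) \le 2/3$. Theorem~\ref{OreK} already yields $\rho(G) \le \epsilon\,|V(G)| + 2/3$, so the task is to extract an additional $\epsilon\,|V(G)|$ worth of edges from the girth hypothesis.

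The core of the proof is a reducibility lemma asserting that every nontrivial proper subgraph $H \subsetneq G$ has potential bounded below by an absolute constant. I would establish this in a list-coloring form: using minimality, one proves in parallel (by strong induction) a list-coloring analogue of the density lower bound, so that when $G - V(H)$ is properly $3$-colored the induced $3$-lists on the boundary of $H$ can always be extended, contradicting $4$-criticality. Standard consequences are that $G$ has no small vertex cuts, no clusters of adjacent low-degree vertices, and no small dense patches reminiscent of the Gallai-tree substructures from the Kostochka--Yancey analysis.

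Having ruled out such substructures, I would finish with a discharging argument. Each vertex gets initial charge equal to its degree, and discharging rules redistribute charge to reflect density and cycle-length information. The gain over Kostochka--Yancey is forced by the absence of $4$-cycles: the Thomas--Walls triangle-free tight examples are rich in $C_4$'s, and under girth $\ge 5$ the local structures that produce equality in Kostochka--Yancey cannot assemble themselves, leaving $\Omega(1)$ slack at many vertices. The hardest step, I expect, is the list-coloring reducibility lemma, which demands a uniform potential gap across a large family of small boundary configurations and a delicate inductive setup; the final discharging is technical but largely routine once the reducible configurations are catalogued. Constants are chosen in cascade: $\epsilon$ small enough that the list-coloring induction closes, and other parameters tuned so that the discharging inequality is strict.
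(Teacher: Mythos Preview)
This theorem is not actually proved in the present paper: it is quoted from the author's companion manuscript~\cite{Postle}, and the current paper only describes its method in passing. What the paper does tell us is that the proof of Theorem~\ref{Girth5} follows the same template as the main result here for $k=5$: one does \emph{not} restrict to girth-$5$ graphs, but instead proves a stronger inequality valid for \emph{all} $4$-critical graphs, of the form
\[
|E(G)|\ge \left(\tfrac{5}{3}+\epsilon\right)|V(G)|-\tfrac{2}{3}-\delta\, T(G),
\]
where $T(G)$ counts a maximum collection of vertex-disjoint cycles of length at most four. The point is that the Kostochka--Yancey $\phi$-identification reduction destroys the girth hypothesis (it can create triangles and $4$-cycles), so one must carry a penalty term through the induction rather than assume girth $\ge 5$ throughout.

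Your proposal takes a genuinely different route, staying inside the class of girth-$5$ graphs via a list-coloring reducibility scheme. In principle that is a reasonable alternative strategy, but as written it has a real gap. Your ``reducibility lemma'' asserts that for any proper $H\subsetneq G$, a $3$-coloring of $G-V(H)$ induces lists on $\partial H$ that can always be extended to $H$; taken literally this would make $G$ $3$-colorable, so you must mean something more refined (e.g.\ a potential threshold below which $H$ is reducible). But you never specify that threshold or how the list-coloring induction interacts with the potential $\rho$, and this is exactly where the work lies: the Thomas--Walls examples show that $4$-Ore graphs can be triangle-free, so the obstruction to improving Kostochka--Yancey is genuinely the $4$-cycles, and any reducibility catalogue must be rich enough to force $C_4$'s in a would-be counterexample. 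Without a concrete statement of which configurations are reducible and why the list-coloring induction closes, the discharging step has nothing to stand on. The paper's approach sidesteps this by absorbing the short cycles into the potential itself, which is what makes the induction go through cleanly.
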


This suggests that excluding certain subgraphs - for $k=4$, the triangle and $4$-cycle - can improve the lower bound. What then is a natural set of subgraphs to exclude for general $k$? The construction of Thomas and Walls can be extended to yield an infinite family of $k$-critical $K_{k-1}$-free graphs whose asymptotic ratio of edges to vertices is $\frac{k}{2} - \frac{1}{k-1}$, the same ratio as in Theorem~\ref{OreK}. 

What if we exclude even smaller cliques? In a work predating Kostochka and Yancey, Krivelevich~\cite{Kri} investigated this question, though the bounds he proved are worse than those of Kostochka and Yancey. Kostochka and Steibitz~\cite{KS} proved that that ratio of edges to vertices in a $k$-critical $K_s$-free graph is at least $k-o(k)$ when $s$ is fixed; this is a multiplicative factor of 2 improvement and is best possible. For details about these and other related results as well as more history about our problem in general, see the extensive survey of Kostochka~\cite{Ksurvey}, in particular Section 5. Also in that section of the survey, Kostochka says that finding the average degree of triangle-free $k$-critical graphs for small and moderate $k$ is an interesting open problem.
 
On the other extreme, instead of excluding fixed sized cliques, it is natural to wonder what is the largest clique that can be excluded such that the tight bound of Kostochka and Yancey can be improved. As noted above, $K_{k-1}$ does not suffice. We make the following conjecture that excluding $K_{k-2}$ does indeed suffice.

\begin{conjecture}\label{K2}
For every $k\ge 4$, there exists $\epsilon_k>0$ such that if $G$ is a $k$-critical $K_{k-2}$-free graph, then 
$$|E(G)|\ge \left(\frac{k}{2}-\frac{1}{k-1} + \epsilon_k\right)|V(G)| - \frac{k(k-3)}{2(k-1)}.$$
\end{conjecture}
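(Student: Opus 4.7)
The plan is to induct on the number of vertices, combining Theorem~\ref{OreK} with a structural analysis of the Ore-type constructions that achieve it. I first define the slack
$$\phi(G) = |E(G)| - \left(\frac{k}{2} - \frac{1}{k-1}\right)|V(G)| + \frac{k(k-3)}{2(k-1)},$$
which by Theorem~\ref{OreK} is nonnegative on every $k$-critical graph, and I aim to show $\phi(G) \ge \epsilon_k \, |V(G)|$ whenever $G$ is additionally $K_{k-2}$-free. The main point is that every known extremal $k$-critical graph (an ``Ore graph'') is built from $K_k$ by iteratively identifying a vertex of one piece with an edge of another, and each such gluing leaves a visible $K_{k-2}$ inside the graph. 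So the $K_{k-2}$-free hypothesis should forbid every near-extremal subgraph and buy uniform linear slack.

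The second step is to prove an approximate-structure statement along the following lines: there exists $c_k>0$ such that any $k$-critical graph $G$ with $\phi(G) < c_k |V(G)|$ contains a copy of $K_{k-2}$ as a subgraph, and in fact many pairwise-disjoint such copies. This is the analogue, at all levels $k$, of the role played by triangles and $K_4$'s in the main theorem of the paper under review. Here I would follow the Kostochka-Yancey potential machinery: take a minimum counterexample $G$ to $\phi(G) \ge \epsilon_k |V(G)|$, show using criticality that $G$ is $(k-1)$-connected away from small separators, and locate a small subgraph $H \subseteq G$ of low $\phi$-contribution. Such an $H$ can either be replaced by a cheaper ``gadget'' preserving $k$-criticality (contradicting minimality), or else its restricted list-colorings force an Ore-like local pattern, which in turn forces a $K_{k-2}$ -- contradicting the hypothesis.

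The third step is a discharging argument to convert the local ``gain'' from each forbidden Ore gadget into a global linear slack $\epsilon_k |V(G)|$. For this I would assign each vertex an initial charge reflecting its degree contribution in the Kostochka-Yancey inequality, then redistribute charge from the forbidden-$K_{k-2}$ locations outward along short paths, arguing that the reductions from step two guarantee that any low-charge vertex lies close to many such locations. The base case is $k=4$ (Theorem~\ref{Girth5}, once one strengthens it to the triangle-free setting rather than girth five -- a non-trivial refinement already in the literature) and $k=5$, the main corollary of this paper.

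The main obstacle is controlling many \emph{overlapping} near-Ore gadgets. The Kostochka-Yancey extremal graphs are highly recursive: each Ore gluing can itself be performed on an already-glued piece, and the forced copies of $K_{k-2}$ may share vertices across many gadgets, so one cannot naively sum the local gains. Producing a potential-function bookkeeping that charges each forbidden $K_{k-2}$ uniformly without double-counting -- and that remains robust under the reducible-configuration reductions of step two -- is, in my view, what makes the conjecture open for $k \ge 6$. One would likely need a structural decomposition theorem for near-extremal $k$-critical graphs (analogous to, but deeper than, the $T(G)$-based bound proved in this paper) before the discharging can be pushed through in full generality.
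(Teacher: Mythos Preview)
The statement you are addressing is a \emph{conjecture}; the paper does not prove it in general and does not claim to. The paper establishes only the case $k=5$ (Theorem~\ref{Ore5TriangleFree}, derived from Theorem~\ref{Main}), and remarks that $k=6$ and all sufficiently large $k$ have since been handled elsewhere, while moderate $k\ge 7$ remain open. Your proposal is therefore not a proof but a proof \emph{strategy}, and you yourself acknowledge as much in your final paragraph.

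As a strategy, your outline is broadly faithful to what the paper actually carries out for $k=5$: track a weighted count of forbidden cliques (the paper's $T(G)$), prove a potential inequality robust under Ore-compositions and under the Kostochka--Yancey critical-extension reduction, and finish with discharging. Two concrete errors, however, should be flagged.

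First, your treatment of $k=4$ is wrong. For $k=4$ the forbidden clique is $K_{k-2}=K_2$, i.e.\ an edge, so a $K_2$-free $4$-critical graph does not exist and the conjecture is vacuous there. Theorem~\ref{Girth5} (girth five) is the paper's proposed \emph{analogue}, not a base case, and no ``triangle-free refinement already in the literature'' can play the role you assign it: the paper explicitly notes that triangle-free $4$-critical graphs can meet the Kostochka--Yancey bound asymptotically (the Thomas--Walls family), so the strengthening you invoke is false.

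Second, you state the conjecture is open for $k\ge 6$; according to the paper it is settled for $k=6$ and for large $k$, with the gap beginning at $k=7$.

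Beyond these, your final paragraph correctly identifies the genuine obstruction: controlling overlapping near-Ore pieces via a potential that does not double-count. That is precisely the difficulty the paper's $T(G)$ (vertex-disjoint, with $K_4$'s weighted double) is engineered to handle for $k=5$, and no general-$k$ substitute is currently known.
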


The conjecture is vacuously true for $k=4$ since there does not exist a $4$-critical $K_2$-free graph. Hence, Theorem~\ref{Girth5} can be viewed as the appropriate analogue for Conjecture~\ref{K2} with $k=4$ where $K_2$-free is replaced by $\{C_4,K_3\}$-free. The subject of this paper is to consider the case when $k=5$. In fact, we prove Conjecture~\ref{K2} for $k=5$ as follows.

\begin{theorem}\label{Ore5TriangleFree}
There exists $\epsilon > 0$ such that if $G$ is a $5$-critical triangle-free graph, then 
$$|E(G)| \ge \left(\frac{9}{4} + \epsilon\right)|V(G)| - \frac{5}{4}.$$
\end{theorem}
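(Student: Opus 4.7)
The plan is to prove, by induction on $|V(G)|$, the stronger weighted inequality
$$|E(G)| \ge \bigl(\tfrac{9}{4}+\epsilon\bigr)|V(G)| - \tfrac{5}{4} - \delta\, T(G)$$
stated in the abstract; Theorem~\ref{Ore5TriangleFree} then follows immediately because $T(G) = 0$ for triangle-free $G$. Following the potential method of Kostochka--Yancey~\cite{KY1} and of the girth-$5$ analogue \cite{Postle}, I would work with the potential function
$$p(H) := (9+4\epsilon)|V(H)| - 4|E(H)| - 4\delta\, T(H)$$
on subgraphs $H$ of a fixed host graph $G$, so the target rephrases as $p(G) \le 5$. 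Assume for contradiction that $G$ is a $5$-critical graph with $p(G) > 5$ on the minimum number of vertices, breaking ties by maximum number of edges.

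The preliminary observations are standard: $\delta(G) \ge 4$, and small separators or other trivially reducible configurations are forbidden by criticality combined with the induction hypothesis applied to the pieces. The core technical step is then an analysis of proper subsets $R \subsetneq V(G)$ of \emph{low potential}. For any such $R$, replace $V(G) \setminus R$ by a constant-size "critical gadget" (built from $K_5$ and Ore-compositions thereof) attached along the boundary $\partial R$ to obtain a smaller $5$-critical graph $G'$. By the induction hypothesis $G'$ satisfies the weighted inequality, hence admits a suitable $4$-coloring of an auxiliary graph; the low-potential threshold on $R$ is calibrated so that this coloring must extend through $R$ back to $G$, contradicting criticality. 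The $T(G)$ term enters precisely here: each triangle or $K_4$ inside $R$ is a local obstruction that cuts down the number of extendable colorings by a bounded multiplicative factor, and the $\delta T(G[R])$ charge is designed to absorb exactly this loss.

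With no low-potential subsets, the argument closes globally. Pick a vertex $v$ of minimum degree, consider a $4$-coloring of $G - v$ guaranteed by criticality, and let $C_1,\ldots,C_4$ be the color classes. Since no $G[C_i]$ is low-potential, summing the lower bounds on $p(G[C_i])$ --- together with careful accounting for how the edges, triangles, and $K_4$'s of $G$ split across the classes and the neighborhood of $v$ --- should yield $p(G) \le 5$, the desired contradiction, provided $\epsilon \ll \delta$ are chosen small enough. The relative weighting "$K_4$ counts as twice $K_3$" in the definition of $T$ is calibrated precisely to make this counting balance across a proper $4$-coloring.

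The hardest step will be the low-potential reduction. In the girth-$5$ case of Theorem~\ref{Girth5}, $T \equiv 0$ lets the gadget-replacement and extension-counting go through cleanly, but here the gadget must be engineered not to introduce spurious triangles or $K_4$'s into $G'$ and not to destroy existing ones in an uncontrolled way; managing the size, independence, and clique-incidence structure of $\partial R$ across a detailed case analysis on the shape of $G[R]$ will be the bulk of the work. The triangle-free hypothesis itself is not used at any intermediate stage; its sole role is to force $T(G) = 0$ in the statement and thereby expose the full $\epsilon$-slack in the master inequality.
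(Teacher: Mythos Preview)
Your high-level reduction---prove the weighted inequality for all $5$-critical $G$ and then set $T(G)=0$---matches the paper exactly, and the use of a Kostochka--Yancey potential incorporating $T(G)$ is the right framework. But two of your key steps are either wrong as stated or miss the actual mechanism.

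First, you misidentify the role of $T(G)$. It is not that triangles inside $R$ obstruct extensions of colorings. The Kostochka--Yancey reduction takes a $4$-coloring $\phi$ of $G[R]$ and \emph{identifies} same-colored vertices of $R$ to obtain $G_\phi(R)$, which contains a smaller $5$-critical graph $W$; one then compares $p_G(R)$ to $p(W)$ via a submodularity inequality. The danger is that this identification may \emph{create} new triangles or $K_4$'s in $W$ that were not in $G$, so $T(W)$ can exceed $T(W\cap G)$ by a bounded amount (at most the core size, since new cliques must use the identified vertices). The $\delta T$ term is calibrated to absorb exactly this gain, and the choice ``vertex-disjoint'' in the definition of $T$ is what keeps the gain bounded---edge-disjoint triangles could proliferate unboundedly under identification. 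Your ``gadget replacement'' picture and ``colorings cut down by a multiplicative factor'' narrative do not describe this mechanism.

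Second, your proposed global closure does not work. If $C_1,\dots,C_4$ are color classes of $G-v$, then each $G[C_i]$ is an independent set, so $p(G[C_i])=(9+4\epsilon)|C_i|$ is trivially large; summing these yields no upper bound on $p(G)$. The paper closes the argument entirely differently: after the potential analysis rules out collapsible subsets and identifiable pairs, a substantial structural study (almost-$5$-Ore subgraphs, frames, bars) shows that in a minimum counterexample every component of $D_4(G)$ has at most two vertices and degree-$5$ vertices see at most one matched degree-$4$ neighbor. The final step is then a discharging argument combined with the Kierstead--Rabern bound $|E(G)|\ge\frac{k-2}{2}|V(G)|+\frac{1}{2}\mathrm{mic}(G)$, which together force $\epsilon>1/13$ and give the contradiction. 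Nothing like a color-class decomposition appears.
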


In fact, our main theorem shows that $\epsilon = \frac{1}{84}$ suffices in Theorem~\ref{Ore5TriangleFree}. We should also mention that Conjecture~\ref{K2} has recently been proven for $k=6$~\cite{GaoPostle} and also for large $k$~\cite{Larsen}. Strangely, the range of moderate numbers starting with $k=7$ remains open; indeed, these seem to be the hardest cases. Before we discuss how Theorem~\ref{Ore5TriangleFree} is proved, we need to discuss the family of graphs which attain the bound in Theorem~\ref{OreK} since these are central to the proof of Theorem~\ref{Ore5TriangleFree}.

\begin{definition}
An \emph{Ore-composition} of graphs $G_1$ and $G_2$ is a graph obtained by the following procedure:
\begin{enumerate}
\item delete an edge $xy$ from $G_1$;
\item split some vertex $z$ of $G_2$ into two vertices $z_1$ and $z_2$ of positive degree;
\item identify $x$ with $z_1$ and identify $y$ with $z_2$.
\end{enumerate}
We say that $G_1$ is the \emph{edge-side} and $G_2$ the \emph{vertex-side} of the composition. Furthermore, we say that $xy$ is the \emph{replaced edge} of $G_1$ and that $z$ is the \emph{split vertex} of $G_2$.
We say that $G$ is a \emph{$k$-Ore graph} if it can be obtained from copies of $K_k$ and repeated Ore-compositions.
\end{definition}

Kostochka and Yancey~\cite{KY3} proved that the only graph attaining the bound in Theorem~\ref{OreK} are $k$-Ore graphs as follows.

\begin{theorem}[Kostochka, Yancey \cite{KY3}] \label{OreKEquality}
If $G$ is a $k$-critical graph that is not $k$-Ore, then $$|E(G)|\ge \left(\frac{k}{2}-\frac{1}{k-1}\right)|V(G)| - \frac{y_k}{2(k-1)},$$
where $y_k=\max \{2k-6,k^2-5k+2\}$. 
\end{theorem}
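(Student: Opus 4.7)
The plan is a minimum-counterexample argument driven by the potential function
$$P(H) := (k-2)(k+1)\,|V(H)| - 2(k-1)\,|E(H)|.$$
Theorem~\ref{OreK} is equivalent to $P(G) \le k(k-3)$ for every $k$-critical $G$, and the statement I want to prove is $P(G) \le y_k$ under the additional hypothesis that $G$ is not $k$-Ore. A short direct calculation from the definition of Ore-composition gives
$$P(G_1 \oplus G_2) = P(G_1) + P(G_2) - k(k-3),$$
and since $P(K_k) = k(k-3)$, every $k$-Ore graph satisfies $P = k(k-3)$ exactly. The task is therefore to show that leaving the $k$-Ore family costs at least $k(k-3) - y_k$ in potential.

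Let $G$ be a counterexample with $|V(G)|$ minimum, and among such with $|E(G)|$ maximum. I would first collect the standard reducibility facts for $k$-critical graphs: minimum degree at least $k-1$, no $K_k$ as a subgraph, and the key color-extension lemma that, for any nonempty proper $R \subsetneq V(G)$, a $(k-1)$-coloring of $G - R$ always exists and fails to extend to $R$ in a controlled way. Combining this with Theorem~\ref{OreK} applied to a $k$-critical subgraph of an auxiliary graph built from $G$ by identifying vertices of $V(G) \setminus R$ gives, for each $R$, an inequality between $P(G[R])$, $|R|$, and the number of edges from $R$ to $V(G) \setminus R$.

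The heart of the argument is a cleaving/gluing lemma: if $G$ contains certain \emph{reducible} configurations, such as a $2$-vertex cut, a $K_{k-1}$-subgraph, or a degree-$(k-1)$ vertex with a neighborhood of restricted structure, then $G$ can be realized as an Ore-composition $G_1 \oplus G_2$ with both $G_1, G_2$ smaller $k$-critical graphs. By minimality, either some $G_i$ is not $k$-Ore, in which case induction and additivity of $P$ yield $P(G) \le y_k + k(k-3) - k(k-3) = y_k$; or both $G_1, G_2$ are $k$-Ore, making $G$ itself $k$-Ore. Both outcomes contradict the choice of $G$.

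The hardest step, which I expect to consume most of the work, is showing that the absence of \emph{every} such reducible configuration forces $G = K_k$, which is itself $k$-Ore and closes the contradiction. This requires a careful case analysis of low-degree vertices, small separators, and near-cliques; and above all it requires sharp bookkeeping of the potential so that each case either produces an Ore-decomposition or saves at least $k(k-3)-y_k$ in $P$. The two-case formula $y_k = \max\{2k-6,\, k^2-5k+2\}$ is not an artifact but the genuine worst case: for $k \le 5$ the tightest non-Ore reduction comes from eliminating a degree-$(k-1)$ vertex whose neighborhood lacks any potential-saving structure, while for $k \ge 6$ it comes from the $K_{k-1}$-based cleaving, and calibrating the argument so that $y_k$ is matched on both ends is the delicate part.
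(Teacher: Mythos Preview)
This theorem is not proved in the paper under review. It is stated with attribution to Kostochka and Yancey~\cite{KY3} and is used as an input (via its $k=5$ specialization, Theorem~\ref{Ore5Equality}) to the paper's own arguments. There is therefore no ``paper's own proof'' to compare against; any comparison is to the original Kostochka--Yancey argument in~\cite{KY3}.

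As for your proposal itself: what you have written is an outline, not a proof, and you acknowledge this explicitly (``The hardest step, which I expect to consume most of the work, is showing that the absence of every such reducible configuration forces $G=K_k$''). The preliminary calculations are correct: the potential $P$ is the right normalization, $P(K_k)=k(k-3)$, and Ore-composition is additive in exactly the way you describe. The minimum-counterexample framework and the cleaving/gluing step are also the right setup. But the entire content of the theorem lies in the part you have deferred, and your description of how that part concludes is not accurate. In the actual Kostochka--Yancey proof, eliminating reducible configurations does \emph{not} force $G=K_k$; rather, the structural constraints obtained (on low-degree vertices, clusters, and subsets of small potential) feed into a counting/discharging argument that directly contradicts $P(G)>y_k$. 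There is no finite list of local reductions whose absence pins $G$ down to a single graph; the argument is quantitative throughout. So as written, your proposal has a genuine gap: the endgame you sketch does not exist in the form you describe, and the substitute that does work (careful potential bookkeeping culminating in a global density contradiction) is precisely the long technical core you have not supplied.
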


Since $y_5=\max\{4,2\}=4$, this gives the following theorem which we will need in our proof of Theorem~\ref{Ore5TriangleFree}.

\begin{theorem}[Kostochka, Yancey \cite{KY3}] \label{Ore5Equality}
If $G$ is a $5$-critical graph that is not $5$-Ore, then $$|E(G)|\ge \frac{9}{4}|V(G)| - \frac{1}{2}.$$
\end{theorem}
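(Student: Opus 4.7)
The statement is a direct specialization of Theorem~\ref{OreKEquality} to the case $k = 5$, so the plan is simply to substitute $k=5$ into the general bound and simplify both the coefficient of $|V(G)|$ and the additive constant. No graph-theoretic argument beyond invoking the earlier theorem is required, since Kostochka and Yancey's proof of Theorem~\ref{OreKEquality} already handles every $k \ge 4$ uniformly.

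First, I would compute the slope: for $k=5$,
$$\frac{k}{2} - \frac{1}{k-1} = \frac{5}{2} - \frac{1}{4} = \frac{9}{4},$$
which matches the coefficient in the desired bound. Next, I would evaluate the quantity $y_k$ at $k=5$:
$$y_5 = \max\{2k-6,\ k^2 - 5k + 2\} = \max\{4,\ 2\} = 4,$$
so the additive constant becomes
$$\frac{y_5}{2(k-1)} = \frac{4}{2 \cdot 4} = \frac{1}{2}.$$

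Assuming $G$ is a $5$-critical graph that is not $5$-Ore, Theorem~\ref{OreKEquality} applied with $k=5$ immediately gives
$$|E(G)| \ge \frac{9}{4}|V(G)| - \frac{1}{2},$$
which is the claimed inequality. There is no real obstacle here: the only thing to verify is the arithmetic, in particular that $k^2 - 5k + 2 = 2 < 4 = 2k - 6$ at $k = 5$ so that the maximum defining $y_5$ is attained by the first term. Thus the entire proof reduces to this short calculation.
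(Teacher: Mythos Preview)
Your proposal is correct and takes essentially the same approach as the paper: the paper itself presents Theorem~\ref{Ore5Equality} as the immediate specialization of Theorem~\ref{OreKEquality} to $k=5$, noting only that $y_5=\max\{4,2\}=4$ before stating the result.
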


To prove Theorem~\ref{Ore5TriangleFree}, we would also need to prove its corollary that every $5$-Ore graph contains a triangle. In fact, we prove much more. We show that $5$-Ore graphs contain linearly many vertex-disjoint triangles. The concept of tracking not just whether a graph contains a triangle but how many vertex-disjoint triangles it has is actually the crucial idea for the proof. This was also the key idea in the proof of Theorem~\ref{Girth5} where the number of vertex-disjoint cycles of length at most four was tracked. Here, we will also need to track copies of $K_4$, which while not containing two vertex-disjoint triangles are more valuable structurally than just a triangle. To that end, we make the following definition.

\begin{definition}
If $H$ is a disjoint union of cliques of size three or four, then we let $T(H)$ be the number of components in $H$ that are cliques of size three plus twice the number of components which are cliques of size four. More generally, we let $T(G)$ denote the maximum of $T(H)$ over all subgraphs $H$ of $G$ that are the disjoint union of cliques of size three or four.
\end{definition}

We are now ready to state our main result which proves that the lower bound on the asymptotic ratio of edges to vertices in $k$-critical graph $G$ may be increased if a factor proportional to $T(G)$ is subtracted.

\begin{thm}\label{Main}
Let $\epsilon=\frac{1}{21}$, $\delta=8\epsilon$ and $P=6\delta = 48\epsilon$. Let $$p(G) = (9+\epsilon)|V(G)|-4|E(G)|-\delta T(G).$$ If $G$ is a $5$-critical graph, then 

\begin{enumerate}
\item $p(G) = 5 + 5\epsilon-2\delta$ if $G=K_5$,
\item $p(G)\le 5 + |V(G)|\epsilon - (2+\frac{(|V(G)|-1)}{4})\delta$ if $G$ is $5$-Ore and $G\ne K_5$,
\item $p(G)\le 5 - P$ otherwise. 
\end{enumerate}
\end{thm}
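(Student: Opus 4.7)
The plan is to induct on $|V(G)|$ and treat the three cases separately. Case (1) is a direct computation: for $G = K_5$ we have $|V(G)| = 5$, $|E(G)| = 10$, and $T(K_5) = 2$ (a single $K_4$ achieves the maximum weight), so $p(G) = 45 + 5\epsilon - 40 - 2\delta = 5 + 5\epsilon - 2\delta$.

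For case (2), I would induct along the Ore decomposition of $G$. Write $G$ as the Ore-composition of smaller $5$-Ore graphs $G_1$ and $G_2$, so that $|V(G)| = |V(G_1)| + |V(G_2)| - 1$ and $|E(G)| = |E(G_1)| + |E(G_2)| - 1$. Since every $5$-Ore graph satisfies the Kostochka--Yancey equality $4|E(G)| = 9|V(G)| - 5$, the inequality of case (2) rearranges to the purely combinatorial statement $T(G) \ge 2 + (|V(G)|-1)/4$, i.e., that $5$-Ore graphs contain linearly many vertex-disjoint triangles and $K_4$'s. I would prove this by induction on the number of Ore operations: the base case $K_5$ contributes a $K_4$ of weight $2$, and each Ore step introduces $|V(G_2)| - 1$ new vertices, so it suffices to exhibit an extra $(|V(G_2)|-1)/4$ units of weight in disjoint cliques lying in $V(G_2) \setminus \{z\}$ and disjoint from the cliques already chosen in $G_1$. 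The inductive clique-packing guaranteed for $G_2$, together with a small adjustment near the split vertex $z$, should provide this.

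For case (3), the crux of the paper, assume for contradiction that $G$ is a minimum counterexample: a $5$-critical non-$5$-Ore graph with $p(G) > 5 - P$. By Theorem~\ref{Ore5Equality} we have $9|V(G)| - 4|E(G)| \le 2$, hence $p(G) \le 2 + \epsilon|V(G)| - \delta T(G)$. Substituting $\delta = 8\epsilon$ and $P = 48\epsilon$, the assumption $p(G) > 5 - P$ forces $|V(G)| - 8T(G) > 15$, so $G$ must be both large and clique-sparse. The strategy is then to locate a reducible configuration (a low-degree vertex, a small separating set, or a specific small subgraph) whose removal or identification produces strictly smaller $5$-critical graphs $H_1, \ldots, H_r$. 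Applying the inductive hypothesis to each $H_i$ (using whichever of (1), (2), or (3) applies) and recombining the potentials, while carefully accounting for changes in $|V|$, $|E|$, and $T$, should contradict $p(G) > 5 - P$.

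The main obstacle, I expect, is controlling the recombination when one or more inductive pieces $H_i$ turns out to be $5$-Ore. The bounds in (1) and (2) are substantially weaker than $5 - P$ because their right-hand sides grow linearly in $|V(H_i)|$, so the excess potential from an Ore piece must be absorbed either by the $-\delta T$ term (via vertex-disjoint cliques inside the Ore piece, guaranteed by the clique-packing lemma of case (2)) or by restricting admissible separators and configurations so that Ore pieces cannot appear arbitrarily. I anticipate that the bulk of the work will be a careful discharging-style bookkeeping argument that enumerates an unavoidable list of reducible configurations and, for each, shows that the combined slack from the extra clique weight and from the $\epsilon$ term suffices to beat the $5-P$ target, with the inequality $|V(G)| - 8T(G) > 15$ serving as the central quantitative lever.
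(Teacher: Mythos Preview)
Your handling of cases (1) and (2) is correct and matches the paper: case (2) reduces exactly to the clique-packing statement $T(G)\ge 2+(|V(G)|-1)/4$ for $5$-Ore $G\ne K_5$, which the paper proves as Lemma~\ref{5OrePot} by induction along Ore-compositions (your sketch is the right shape, though the ``small adjustment near $z$'' is really the observation that $T(K_5-e)=T(K_5\setminus z)=2$, handled as Lemma~\ref{5OreT}).

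For case (3), however, your plan has a genuine gap. Your proposed lever $|V(G)|-8T(G)>15$ is essentially inert: for a triangle-free counterexample it says only $|V(G)|>15$, which constrains nothing, and the paper never uses this inequality. What the paper actually does is develop the Kostochka--Yancey potential method with $\phi$-identification and \emph{critical extensions} (Lemma~\ref{Extension}) to lower-bound $p_G(R)$ for every proper $R\subsetneq V(G)$. This is what forces the structural properties of a minimum counterexample: $3$-connectivity, no diamonds or emeralds, no identifiable pairs, no clusters of size $\ge 2$. From there, a cloning argument (Lemma~\ref{Special}) produces \emph{almost $5$-Ore} subgraphs with a $K_4$ ``frame'', which is then used to prove that $D_4(G)$ has components of size at most two and that each degree-$5$ vertex has at most one degree-$4$ neighbor lying in a size-two component.

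The final contradiction is not the reducibility bookkeeping you describe but a short discharging argument requiring an outside ingredient you do not mention: the Kierstead--Rabern inequality $|E(G)|\ge \frac{k-2}{2}|V(G)|+\frac{1}{2}\,\mathrm{mic}(G)$ (Theorem~\ref{mic}). The structural lemmas on $D_4(G)$ give an independent set of degree-$4$ vertices of size $S+M$ (where $S,M$ count components of $D_4(G)$ of sizes $1$ and $2$), so Kierstead--Rabern yields an \emph{upper} bound $S+M<\frac{3+\epsilon}{8}|V(G)|$; meanwhile the charge sum $\sum_v((9+\epsilon)-2d(v))\ge p(G)>0$, redistributed using the $D_4(G)$ structure, yields a \emph{lower} bound $S+M>\frac{3-4\epsilon}{7}|V(G)|$. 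These collide unless $\epsilon>1/13$. Your outline contains neither the critical-extension machinery that produces the $D_4(G)$ structure nor the Kierstead--Rabern bound that closes the argument, and without a substitute for the latter there is no visible route to a contradiction.
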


Note that Theorem~\ref{Main} proves three different bounds, one for $K_5$, one for the other $5$-Ore graphs and one for all remaining graphs. This is necessary for the inductive step of the proof to work. As for the first bound, $T(K_5)=2$ and hence $p(K_5)=5+5\epsilon -2\delta$ as desired. 

In Section 2, we prove the following lemma.

\begin{lemma}\label{5OrePot}
If $G\ne K_5$ is a $5$-Ore graph, then $T(G)\ge 2 + \frac{|V(G)|-1}{4}$.
\end{lemma}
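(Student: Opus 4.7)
The plan is to induct on $|V(G)|$. Since $G \ne K_5$ is $5$-Ore, $G$ is an Ore-composition of two $5$-Ore graphs $G_1, G_2$ with $n_i := |V(G_i)| \ge 5$ and $|V(G)| = n_1 + n_2 - 1 \ge 9$; here $G_1$ is the edge-side (edge $xy$ deleted) and $G_2$ is the vertex-side (vertex $z$ split into $z_1, z_2$, which are then identified with $x$ and $y$).

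The key structural observation is that in $G$ the vertex sets $V(G_1)$ and $V(G_2) \setminus \{z\}$ partition $V(G)$; moreover the subgraph of $G$ induced on $V(G_1)$ is exactly $G_1 - xy$, and the subgraph induced on $V(G_2) \setminus \{z\}$ is exactly $G_2 - z$. Taking the disjoint union of optimal $\{K_3, K_4\}$-packings on each side therefore gives
\[
T(G) \;\ge\; T(G_1 - xy) + T(G_2 - z).
\]
I would then record the elementary penalty bound, valid for \emph{any} graph $H$: $T(H - e) \ge T(H) - 1$ for every edge $e$, and $T(H - v) \ge T(H) - 1$ for every vertex $v$. Indeed, given an optimal packing, if some clique $C$ uses $e$ (or contains $v$) then either $C$ is a triangle, which we discard (losing weight $1$), or $C$ is a $K_4$, which we replace by the triangle obtained by deleting one endpoint of $e$ (or the vertex $v$) from $C$ (again losing weight $1$). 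Combined with the direct computations $T(K_5) = T(K_5 - e) = T(K_5 - v) = 2$, this yields $T(G_i - xy), T(G_i - z) \ge T(G_i)$ when $G_i = K_5$, and $T(G_i - xy), T(G_i - z) \ge T(G_i) - 1$ otherwise.

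To finish, I would split into cases depending on whether each $G_i$ equals $K_5$. If both $G_i = K_5$ (the base case, $|V(G)| = 9$), then $T(G) \ge 2 + 2 = 4 = 2 + \tfrac{|V(G)| - 1}{4}$. If $G_1 = K_5$ and $G_2 \ne K_5$, the inductive hypothesis gives $T(G_2) \ge 2 + \tfrac{n_2 - 1}{4}$, so $T(G) \ge 2 + (T(G_2) - 1) \ge 3 + \tfrac{n_2 - 1}{4} = 2 + \tfrac{|V(G)| - 1}{4}$; the case $G_1 \ne K_5$, $G_2 = K_5$ is symmetric. If neither equals $K_5$, both sides satisfy the inductive bound, and $T(G) \ge (T(G_1) - 1) + (T(G_2) - 1) \ge 2 + \tfrac{|V(G)| - 1}{4}$. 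There is no serious obstacle here beyond setting up the right additive inequality; the essential insight is that the $K_4 \to K_3$ replacement makes the edge- and vertex-deletion penalties at most $1$, which is precisely the slack that the inductive bound has in each of the four cases.
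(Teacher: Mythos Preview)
Your proof is correct and follows essentially the same approach as the paper: the paper first isolates the inequality $T(G)\ge T(G_1)+T(G_2)-2$ (with the sharper $T(G)\ge T(G_1)+1$ when $G_2=K_5$) as a separate lemma, whereas you prove this inline via the deletion penalties $T(H-e),T(H-v)\ge T(H)-1$ and $T(K_5-e)=T(K_5-v)=2$, and then both arguments run the identical three-case induction.
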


Hence if $G$ is $5$-Ore then $p(G) \le 5 + |V(G)|\epsilon - (2+\frac{(|V(G)|-1)}{4})\delta$ since $9|V(G)|-4|E(G)|=5$ when $G$ is $5$-Ore. This proves the second assertion in Theorem~\ref{Main}. In Sections 3-6, we complete the proof of Theorem~\ref{Main} by proving that if $G$ is a $5$-critical graph that is not $5$-Ore, then $p(G)\le 5-P$ for a $P$ to be determined later.

We note that Theorem~\ref{Main} is a much stronger theorem than Theorem~\ref{Ore5TriangleFree} in that it shows that a graph whose ratio of edges to vertices is below that of Theorem~\ref{Ore5TriangleFree} has linearly many vertex-disjoint triangles. Why do we prove this stronger theorem? Because we use the potential method of Kostochka and Yancey whose key reduction of identifying vertices in a colored subgraph may create triangles. Hence to use the potential method we must prove a theorem which holds for all $5$-critical graphs not just $5$-critical triangle-free graphs. Indeed, it is this which motivates Theorem~\ref{Main} and the definition of $T(G)$. Furthermore, this explains the choice of vertex-disjoint in the definition of $T(G)$ as opposed to edge-disjoint. The reduction of Kostochka and Yancey may create triangles but it creates at most four new vertex-disjoint triangles while it may create arbitrarily many new edge-disjoint triangles.

Here is an outline of the paper. In Section 2, we prove that $T(G)$ satisfies a certain inequality (Lemma~\ref{5OreT}) for Ore-compositions and use that to prove Lemma~\ref{5OrePot}. We further prove a number of structural properties of $5$-Ore graphs which we will need for the general proof. In Section 3, we extend the notion of Kostochka and Yancey's potential to a new potential which incorporates $T(G)$. We show that this new potential satisfies Kostochka and Yancey's submodular inequality (Lemma~\ref{Extension}) for their key reduction up to a small additive error. We also characterize under what circumstances said inequality is tight. In Section 4, we prove that a minimal counterexample to Theorem~\ref{Main} as well as any graph whose potential is close to being a counterexample (what we call \emph{tight} graphs) satisfies certain structural properties. In Section 5, we use these properties of tight graphs to show that a minimal counterexample satisfies an even stronger list of properties. Finally in Section 6, we prove Theorem~\ref{Main} using discharging.

\section{Cliques in $5$-Ore graphs}

In this section, we investigate cliques of size three and four in $5$-Ore graphs. We also prove Lemma~\ref{5OrePot}. First note the following observation.

\begin{lemma}\label{5OreT}
If $G$ is the Ore-composition of two graphs $G_1$ and $G_2$, then $T(G)\ge T(G_1)+T(G_2)-2$. Furthermore if $G_2=K_5$, then $T(G)\ge T(G_1)+1$.
\end{lemma}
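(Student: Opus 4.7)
The key observation is structural: since the Ore-composition introduces no edges between $V(G_1)\setminus\{x,y\}$ and $V(G_2)\setminus\{z\}$, every clique of $G$ is contained either in $V(G_1)$ (and hence is a clique of $G_1 - xy$) or in $V(G_2')$, where $G_2'$ denotes $G_2$ with $z$ split into $z_1 = x$ and $z_2 = y$. The plan is therefore to build a clique packing $H \subseteq G$ as $H^1 \cup H^2$ with $H^1 \subseteq G_1 - xy$ derived from an optimal clique packing $H_1 \subseteq G_1$, and $H^2 \subseteq G_2'$ derived from an optimal clique packing $H_2 \subseteq G_2$, and to show that the combined weight loses at most $2$ relative to $T(G_1) + T(G_2)$.

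For $H_1$: at most one clique $C$ of $H_1$ contains both $x$ and $y$. If $C = K_4$, replace $C$ by the triangle $C - x$ (or $C - y$); if $C$ is a triangle, simply drop it. The loss is at most $1$. For $H_2$: at most one clique $D$ of $H_2$ contains $z$. If the split assigns all edges from $z$ into $D$ to a single side, then $D$ survives in $G_2'$ with $z_1$ or $z_2$ in place of $z$; otherwise I salvage the triangle on $D$'s three non-$z$ vertices when $D = K_4$, and drop $D$ when $D$ is a triangle with a $1$-$1$ split. The loss is at most $1$.

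The main obstacle is resolving potential conflicts at the identified vertices $z_1 = x$ and $z_2 = y$. The only clique of $H^2$ touching these vertices is $D$ itself (when preserved, occupying exactly one of $z_1, z_2$), and the only cliques of $H^1$ touching them are the modified $C - x$ or $C - y$, or the at most two cliques of $H_1$ containing $x$ or $y$ individually (which can exist only when $C$ does not). Since $D$, when preserved, occupies only one of $\{z_1, z_2\}$, at most one conflict arises. It can be handled at cost at most $1$: choose $C - x$ versus $C - y$ to sit opposite $D$; or, when $D$ is preserved on $z_1$ and conflicts with a clique $K$ of $H_1$ containing $x$, replace $K$ by the triangle $K - x$ if $K = K_4$ and drop $K$ otherwise. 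A short case analysis on whether $C, D$ exist and on the split type confirms that the total loss is at most $2$, giving $T(G) \ge T(G_1) + T(G_2) - 2$.

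For the furthermore, when $G_2 = K_5$ take $H_2$ to be the $K_4$ on the four non-$z$ vertices of $K_5$. This has weight $2 = T(K_5)$ and avoids $z$ entirely, so $D$ does not exist and the $K_4$ survives in $G_2' \subseteq G$ without touching $z_1$ or $z_2$. It therefore conflicts with nothing, and the only possible loss is the at-most-$1$ cost of modifying $H_1$ to eliminate $C$, yielding $T(G) \ge T(G_1) + 2 - 1 = T(G_1) + 1$.
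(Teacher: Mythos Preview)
Your proof is correct, but it takes a noticeably more laborious route than the paper's. The paper's argument is a two-line observation: since $G_1-xy$ and $G_2\setminus z$ sit on disjoint vertex sets inside $G$ (the only overlap between the two sides is at $x=z_1$ and $y=z_2$, and those are precisely the vertices removed from $G_2$ when you delete $z$), one has immediately
\[
T(G)\ \ge\ T(G_1-xy)+T(G_2\setminus z)\ \ge\ (T(G_1)-1)+(T(G_2)-1).
\]
No conflict resolution is needed at all. Your decision to work with the split graph $G_2'$ rather than with $G_2\setminus z$ is what forces the case analysis at $z_1,z_2$; by throwing away $z$ at the outset on the $G_2$-side, the paper sidesteps that entirely at the same cost of $1$ you eventually pay anyway.

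For the furthermore, your argument and the paper's coincide: take the $K_4$ on $V(K_5)\setminus\{z\}$, which has weight $2=T(K_5)$ and avoids $z_1,z_2$, so the only loss is the at-most-$1$ from the $G_1$-side. (The paper also notes $T(K_5-e)=2$ so that the bound holds regardless of whether $K_5$ serves as the vertex-side or the edge-side; your setup implicitly fixes $G_2$ as the vertex-side, which is consistent with the definition as stated.)
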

\begin{proof}
To prove the first statement, without loss of generality let $e$ be the replaced edge of $G_1$ and $z$ the split vertex of $G_2$. It follows that $T(G)\ge T(G_1-e) + T(G_2\setminus z)$. But $T(G_1)-e\ge T(G_1)-1$ and $T(G_2\setminus z)\ge T(G_2) - 1$. Hence $T(G)\ge T(G_1)+T(G_2)-2$ as desired.

To prove the second statement, note that for every edge $e\in E(K_5)$, $T(K_5-e)=2$ and for every vertex $z\in V(K_5)$, $T(K_5\setminus z)=2$. Thus in either case, it follows from the calculations above that $T(G)\ge T(G_1)-1 + 2 = T(G_1)+1$.
\end{proof}

We are now ready to prove Lemma~\ref{5OrePot}.

\vskip .1in
{\bf Proof of Lemma~\ref{5OrePot}.} We proceed by induction on $|V(G)|$. Since $G\ne K_5$ and $G$ is $5$-Ore, $G$ is the Ore-composition of two graphs $G_1$ and $G_2$. For each $i\in\{1,2\}$, if $G_i\ne K_5$, then by induction $T(G_i)\ge 2+\frac{|V(G_i)|-1}{4}$. 

First suppose that neither $G_1$ nor $G_2$ is isomorphic to $K_5$. By Lemma~\ref{5OreT}, $T(G)\ge T(G_1) + T(G_2) - 2$. By induction applied to $G_1$ and $G_2$, we find that $T(G)\ge 2+\frac{|V(G_1)|-1}{4} + 2 + \frac{|V(G_2)|-1}{4} - 2 = 2 + \frac{|V(G_1)|+|V(G_2)|-2}{4}$. Yet $|V(G)|=|V(G_1)|+|V(G_2)|-1$. So $T(G)\ge 2+\frac{|V(G)|-1}{4}$ as desired.

So we may assume without loss of generality that $G_2=K_5$. Next suppose $G_1\ne K_5$. By Lemma~\ref{5OreT}, $T(G)\ge T(G_1)+1$. By induction, $T(G_1)\ge 2+\frac{|V(G_1)|-1}{4}$. So $T(G)\ge 3+\frac{|V(G_1)|-1}{4}$. Yet $|V(G)|=|V(G_1)|+4$, so $T(G)\ge 2+\frac{|V(G)|-1}{4}$ as desired.

Finally suppose both $G_1$ and $G_2$ are isomorphic to $K_5$. Without loss of generality, suppose that $G_1$ is the edge-side and $G_2$ the vertex side of the composition and let $e$ be the replaced edge of $G_1$ and $z$ the split vertex of $G_2$. Then $T(G_1-e)=T(G_2\setminus z)=2$ as both $G_1$ and $G_2$ contain $K_4$ as a subgraph. Hence $T(G)\ge T(G_1-e)+T(G_2\setminus z) = 2 + 2 = 4$. Meanwhile, $|V(G)|=5+5-1=9$. Thus, $T(G)=4\ge 2+\frac{9-1}{4}$ as desired.
\qed

\vskip .1in
We also need the following lemmas about the structure of $k$-Ore graphs. First a few definitions.

\begin{definition}
A \emph{diamond} in a graph $G$ is a subgraph isomorphic to $K_5-e$ where the vertices not incident to $e$ have degree four in $G$. An \emph{emerald} in a graph $G$ is a subgraph isomorphic to $K_4$ whose vertices have degree four in $G$. A graph is \emph{ungemmed} if it contains neither a diamond nor an emerald.
\end{definition}

\begin{definition}
An \emph{Ore-collapsible} subset $R$ of $V(G)$ is a proper subset such that the boundary of $R$ contains exactly two non-adjacent vertices $u,v$ and $R+uv$ is $5$-Ore.
\end{definition}

\begin{lemma}\label{OreCollOrEmeraldVertex}
If $H$ is $5$-Ore and $v\in V(H)$, then there exists either an Ore-collapsible subset of $H$ not containing $v$ or an emerald of $H$ not containing $v$.
\end{lemma}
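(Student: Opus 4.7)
I would proceed by induction on $|V(H)|$. For the base case $H = K_5$, every vertex of $K_5$ has degree $4$, so the four vertices in $V(H) \setminus \{v\}$ form a $K_4$ whose vertices all have degree $4$ in $H$, producing an emerald that avoids $v$.

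For the inductive step, I note that since $H \ne K_5$ is $5$-Ore, $H$ is an Ore-composition of two $5$-Ore graphs: the edge-side $G_1$ with replaced edge $xy$, and the vertex-side $G_2$ with split vertex $z$, where $x$ is identified with $z_1$ and $y$ with $z_2$. Both $|V(G_1)|$ and $|V(G_2)|$ are strictly less than $|V(H)|$ (since each is at least $5$), so the inductive hypothesis is available on either side. The key invariance I plan to exploit throughout is that every vertex of $V(G_2) \setminus \{z\}$ has the same neighborhood (and hence degree) in $H$ as in $G_2$, and symmetrically for $V(G_1) \setminus \{x, y\}$; only the shared vertices $z_1$ and $z_2$ can see strictly larger degree in $H$ than in $G_1$.

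I would then split into two cases according to where $v$ lies. If $v \in V(G_2) \setminus \{z\}$, I would take $R_1 := V(G_1) \subseteq V(H)$ and check that it is a proper subset missing $v$, that its boundary in $H$ is exactly $\{z_1, z_2\}$, that $z_1 z_2 \notin E(H)$ (the edge $xy$ was deleted from $G_1$ and $z$ was a single vertex in $G_2$), and that $R_1 + z_1 z_2 \cong G_1$ is $5$-Ore; hence $R_1$ is Ore-collapsible and avoids $v$. If instead $v \in V(G_1)$, I would split on whether $G_2 = K_5$: in the sub-case $G_2 = K_5$, the set $V(G_2) \setminus \{z\}$ is itself a $K_4$ whose four vertices all have degree $4$ in $H$ (by invariance and their degree $4$ in $K_5$), giving an emerald disjoint from $V(G_1)$; in the remaining sub-case I would apply the inductive hypothesis to $G_2$ with the distinguished vertex $z$ to obtain either an Ore-collapsible subset or an emerald of $G_2$ avoiding $z$, which lifts verbatim to $H$ by the invariance of neighborhoods on $V(G_2) \setminus \{z\}$ and automatically avoids $v$ since that set is disjoint from $V(G_1)$ in $H$.

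The main obstacle, and the reason for the asymmetric treatment of the two sides, is that an emerald or Ore-collapsible set found inside $G_1$ need not survive in $H$: the vertices $z_1$ and $z_2$ acquire extra edges from the vertex-side, so a degree-$4$ vertex of $G_1$ can become a vertex of degree $\ge 5$ in $H$. My plan sidesteps this difficulty by always pushing the inductive step onto $G_2$ when $v \in V(G_1)$, and by using $G_1$ only globally via the Ore-collapsible set $R_1 = V(G_1)$, where the potentially enlarged degrees at $z_1, z_2$ are precisely accommodated by placing those two vertices on the boundary of $R_1$.
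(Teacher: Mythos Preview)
Your proof is correct and follows essentially the same approach as the paper: induction on $|V(H)|$, with $V(G_1)$ serving as the Ore-collapsible set when $v$ lies on the vertex-side, and induction applied to $G_2$ at the split vertex $z$ when $v$ lies on the edge-side. The only difference is that your separate sub-case $G_2 = K_5$ is unnecessary, since it is already subsumed by the base case of the induction; the paper simply invokes induction on $G_2$ directly.
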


\begin{proof}
We proceed by induction on $|V(H)|$. If $H=K_5$, then every vertex is disjoint from an emerald as desired. So we may suppose that $H\ne K_5$. As $H$ is $5$-Ore, then $H$ is the Ore-composition of two $5$-Ore graphs $H_1$ and $H_2$. Without loss of generality suppose that $H_1$ is the edge-side and $H_2$ is the vertex-side of the composition. We now have two cases: either $v\in V(H_1)$ or $v\in V(H_2)\setminus V(H_1)$.

First suppose $v\in V(H_1)$. Let $z$ be the split vertex of $H_2$. By induction, there exists an emerald or Ore-collapsible subset of $H_2$ not containing $z$. But then there exists an Ore-collapsible subset or emerald of $H$ not containing $v$ as desired. So we may suppose that $v\in V(H_2)\setminus V(H_1)$. But then $V(H_1)$ is an Ore-collapsible subset of $H$ not containing $v$ as desired.
\end{proof}

\begin{lemma}\label{OreCollOrEmeraldK4}
If $H\ne K_5$ is $5$-Ore and $T=K_4$ is a subgraph of $H$, then there exists either an Ore-collapsible subset of $H$ disjoint from $T$ or an emerald of $H$ disjoint from $T$.
\end{lemma}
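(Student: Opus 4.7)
The plan is to induct on $|V(H)|$ using the Ore-decomposition of $H$. Since $H \neq K_5$, write $H = H_1 \oplus H_2$ with edge-side $H_1$ (replaced edge $xy$) and vertex-side $H_2$ (split vertex $z$, with parts $z_1,z_2$ identified with $x,y$). Because $xy \notin E(H)$ and no edges of $H$ run between $V(H_1) \setminus \{x, y\}$ and $V(H_2) \setminus \{z\}$, the vertex set of the $K_4$ $T$ must satisfy either $V(T) \subseteq V(H_1)$ or $V(T) \subseteq (V(H_2) \setminus \{z\}) \cup \{x, y\}$; I would handle these two cases separately.

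First I would dispose of the case $V(T) \subseteq (V(H_2) \setminus \{z\}) \cup \{x, y\}$ by showing that $R := V(H_1) \setminus \{x, y\}$ is itself an Ore-collapsible subset of $H$ disjoint from $T$. Disjointness is immediate since $V(H_1) \setminus \{x, y\}$ is disjoint from both $\{x, y\}$ and $V(H_2) \setminus \{z\}$. The boundary of $R$ in $H$ is exactly $\{x, y\}$ (non-adjacent because $xy$ was removed and no vertex of $V(H_2) \setminus \{z\}$ is adjacent to any vertex of $R$), and $H[R \cup \{x, y\}] + xy = (H_1 - xy) + xy = H_1$ is $5$-Ore.

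For the remaining case $V(T) \subseteq V(H_1)$, the idea is to produce the witness on the $H_2$-side, where anything sitting inside $V(H_2) \setminus \{z\}$ is automatically disjoint from $V(T)$. If $H_2 = K_5$, then the four non-$z$ vertices of $H_2$ form a $K_4$ in which each vertex has three neighbors in that $K_4$ plus exactly one neighbor in $\{x, y\}$ (coming from the split of $z$), hence degree $4$ in $H$; this is an emerald. If $H_2 \neq K_5$, I would apply Lemma~\ref{OreCollOrEmeraldVertex} to the $5$-Ore graph $H_2$ with the vertex $z$ to obtain either an emerald $E$ or an Ore-collapsible subset $R'$ of $H_2$, neither containing $z$. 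An emerald $E$ transfers directly because for any vertex $u \neq z$ the passage from $H_2$ to $H$ only renames a possible neighbor $z$ to $x$ or $y$, leaving the degree at $4$. For the Ore-collapsible $R'$ with boundary $\{u, v\}$ in $H_2$, the crucial extra property is that $z \notin \{u, v\}$ as well: then no vertex of $R'$ is adjacent to $z$ in $H_2$, so the split of $z$ does not enlarge the boundary in the transfer to $H$, giving $\partial_H(R') = \{u, v\}$ and $H[R' \cup \{u, v\}] + uv = H_2[R' \cup \{u, v\}] + uv$, which is $5$-Ore.

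The main obstacle will be verifying this extra boundary property $z \notin \{u, v\}$. By inspection of the recursive construction in Lemma~\ref{OreCollOrEmeraldVertex}, the returned $R'$ has the form $V(H_E) \setminus \{u, v\}$ where $H_E$ is the edge-side at the terminal level of the recursion, and the recursion terminates precisely because the then-current avoided vertex lies in the vertex-side and not in $V(H_E)$. A short induction on the recursion depth shows that in fact at every level the avoided vertex stays outside both the returned subset and its boundary, so at level zero we get $z \notin \{u, v\}$; equivalently one may strengthen Lemma~\ref{OreCollOrEmeraldVertex} to assert the boundary condition, which its proof already supplies. Everything else is direct verification of Ore-composition bookkeeping.
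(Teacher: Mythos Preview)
Your proof has a genuine gap stemming from a misreading of the definition of an Ore-collapsible subset. In the paper's definition, the two boundary vertices $u,v$ lie \emph{inside} $R$ (the boundary of $R$ is the set of vertices of $R$ having neighbors in $G\setminus R$), and it is $G[R]+uv$ that must be $5$-Ore. Thus the correct Ore-collapsible candidate on the $H_1$-side is $R=V(H_1)$, not $V(H_1)\setminus\{x,y\}$; with your $R=V(H_1)\setminus\{x,y\}$ the boundary is not $\{x,y\}$ at all, since $x,y\notin R$.

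This matters precisely in your Case~B when $T$ contains $x$ (or, symmetrically, $y$): the set $V(H_1)$ is then not disjoint from $T$, so your argument yields nothing. This subcase is real, not a phantom: if $H_2=K_5$ and the split sends three of the edges at $z$ to $z_1=x$, then $x$ together with those three vertices of $V(H_2)\setminus\{z\}$ form a $K_4$ on the $H_2$-side that meets $V(H_1)$. The paper handles this by observing that $T':=(T\setminus\{x\})\cup\{z\}$ is a $K_4$ inside $H_2$, and then either applies the inductive hypothesis to $H_2$ with $T'$ (when $H_2\neq K_5$), or, when $H_2=K_5$, applies Lemma~\ref{OreCollOrEmeraldVertex} to $H_1$ avoiding $x$ and uses that every vertex of $V(H_1)\setminus\{x\}$ (in particular $y$) has the same degree in $H$ as in $H_1$ to transfer the resulting emerald or Ore-collapsible set back to $H$. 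Your proposal omits this entire branch.

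As a side remark, once the definition is read correctly your ``crucial extra property'' $z\notin\{u,v\}$ in Case~A becomes automatic, since $\{u,v\}\subseteq R'$ and $z\notin R'$; the strengthening of Lemma~\ref{OreCollOrEmeraldVertex} you proposed is therefore unnecessary, and your Case~A then essentially matches the paper's.
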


\begin{proof}
We proceed by induction on $|V(H)|$. As $H\ne K_5$ is $5$-Ore, then $H$ is the Ore-composition of two $5$-Ore graphs $H_1$ and $H_2$. Without loss of generality suppose that $H_1$ is the edge-side with replaced edge $xy$ and $H_2$ is the vertex-side of the composition with split vertex $z$. We now have two cases since $x$ and $y$ are non-adjacent in $H$: either $T\subseteq H_1$ or $T\subseteq H_2$.

First suppose $T\subseteq H_1$. By Lemma~\ref{OreCollOrEmeraldVertex}, there exists either an Ore-collapsible subset or an emerald of $H_2$ not containing $z$. But then there exists an Ore-collapsible subset or emerald of $H$ disjoint from $T$ as desired. 

So we may suppose that $T\subseteq H_2$. Let $xy$ be the replaced edge of $H_1$. As $x$ and $y$ are not adjacent in $H$, we may suppose without loss of generality that $y\not\in T$. If $x\not\in T$, then $V(H_1)$ is an Ore-collapsible subset disjoint from $T$ as desired. So we may suppose that $x\in T$. Notice that $T'=T-x\cup z$ is a subgraph of $H_2$ isomorphic to $K_4$. If $H_2\ne K_5$, then by induction, there exists an Ore-collapsible subset of $H_2$ disjoint from $T'$ or an emerald of $H_2$ disjoint from $T'$. But then there exists an Ore-collapsible subset or emerald of $H$ disjoint from $T$ as desired. So we may suppose that $H_2=K_5$. Thus $y$ has only one neighbor in $V(H_2)\setminus V(H_1)$ and so has the same degree in $H$ as it does in $H_1$. By Lemma~\ref{OreCollOrEmeraldVertex}, there exists either an Ore-collapsible subset or an emerald of $H_1$ disjoint from $x$. If there exists an Ore-collapsible subset $R$ of $H_1$ disjoint from $x$, then $R$ is also an Ore-collapsible subset of $H$ disjoint from $T$ as desired. So we may assume there exists an emerald $E$ of $H_1$ disjoint from $x$. But all vertices of $H_1\setminus\{x\}$ have the same degree in $H_1$ as in $H$. Thus $E$ is also an emerald of $H$ and is disjoint from $T$ as desired.
\end{proof}

\begin{lemma}\label{OreCollToGem}
If $R$ is an Ore-collapsible subset of a graph $G$, then there exists a diamond or emerald of $G$ whose vertices lie in $R$.
\end{lemma}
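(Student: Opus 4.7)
The plan is to proceed by induction on $|R|$. Write $H := G[R] + uv$, which is $5$-Ore by hypothesis. In the base case $|R| = 5$ we have $H = K_5$, so $G[R] = K_5 - uv$; every vertex in $R \setminus \{u,v\}$ lies outside the boundary of $R$, hence has all its $G$-neighbours inside $R$, giving it degree exactly $4$ in $G$. Therefore $G[R]$ is a copy of $K_5 - e$ (with $e = uv$) whose three non-$e$ vertices have degree $4$ in $G$, i.e., a diamond of $G$ whose vertices lie in $R$.

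For the inductive step, $H \ne K_5$, so $H$ is the Ore-composition of $5$-Ore graphs $H_1$ (edge-side, replaced edge $xy$) and $H_2$ (vertex-side, split vertex $z$). The argument is driven by two structural facts: every vertex of $V(H_1) \setminus \{x,y\}$ has all of its $H$-neighbours in $V(H_1)$ (since the only new edges of the Ore-composition emanate from $x$ or $y$), and $V(H_2) \setminus \{z\}$ is disjoint from $V(H_1)$ as subsets of $V(H)$. Since $uv \in E(H)$, this forces one of three cases: (i) both $u, v \in V(H_1)$; (ii) both $u, v \in V(H_2) \setminus \{z\}$; or (iii) exactly one of $u, v$ lies in $V(H_1)$, and then it must be $x$ or $y$.

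In cases (ii) and (iii) I claim $V(H_1)$ is itself an Ore-collapsible subset of $G$ of size less than $|R|$. The only vertices of $V(H_1)$ that can have a $G$-neighbour outside $V(H_1)$ are $x$ and $y$ (which send split edges into $V(H_2) \setminus \{z\}$) together with any element of $(V(H_1) \setminus \{x,y\}) \cap \{u,v\}$ (which has a neighbour outside $R$). In case (ii) this intersection is empty, and in case (iii) the unique element of $V(H_1) \cap \{u,v\}$ already lies in $\{x,y\}$, so in both cases the boundary of $V(H_1)$ in $G$ is exactly $\{x,y\}$. These are non-adjacent in $G$ because $xy$ was deleted in step (1) of the Ore-composition, and $G[V(H_1)] + xy = H_1$ is $5$-Ore. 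Hence $V(H_1)$ is Ore-collapsible in $G$, and the inductive hypothesis yields a diamond or emerald of $G$ inside $V(H_1) \subseteq R$.

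The main obstacle is case (i): here $V(H_1)$ typically fails to be Ore-collapsible in $G$ because $u$ or $v$ contributes extra boundary vertices. To handle it I would apply Lemma~\ref{OreCollOrEmeraldVertex} to the $5$-Ore graph $H_2$ at the vertex $z$, obtaining either an emerald or an Ore-collapsible subset of $H_2$ contained in $V(H_2) \setminus \{z\}$. Since $V(H_2) \setminus \{z\}$ is disjoint from $V(H_1) \supseteq \{u,v\}$, and since both the degrees and the induced edges among non-$z$ vertices of $H_2$ coincide in $H_2$ and in $H$, an emerald of $H_2$ not containing $z$ lifts to an emerald of $H$ disjoint from $\{u,v\}$, which in turn descends to an emerald of $G$ inside $R$ (its vertices are off the boundary of $R$, so their $G$-degrees match their $H$-degrees, and the added edge $uv$ touches none of them). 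An Ore-collapsible subset $A$ of $H_2$ not containing $z$ lifts to an Ore-collapsible subset of $G$ with the same boundary and added-back edge but of size strictly less than $|R|$, so the inductive hypothesis finishes the proof.
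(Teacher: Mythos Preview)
Your proof is correct and follows essentially the same approach as the paper: induct on $|R|$, decompose $H=G[R]+uv$ as an Ore-composition of $H_1$ and $H_2$, and either pass to the smaller Ore-collapsible set $V(H_1)$ or apply Lemma~\ref{OreCollOrEmeraldVertex} to $H_2$ at $z$. Your three-case split is slightly more explicit than the paper's two cases (you separate out the situation where $uv$ straddles $V(H_1)$ and $V(H_2)\setminus\{z\}$, noting that the $V(H_1)$-endpoint must then be $x$ or $y$), but the argument is the same in content.
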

\begin{proof}
We proceed by induction on $|R|$. Let $u,v$ be the boundary vertices of $R$ and let $H=R+uv$. If $H=K_5$, then $R$ is a diamond of $G$ as desired. So we may assume that $H\ne K_5$ and hence $H$ is the Ore-composition of two $5$-Ore graphs $H_1$ and $H_2$. Without loss of generality suppose that $H_1$ is the edge-side of the composition with replaced edge $xy$ and $H_2$ is the vertex-side of the composition with split vertex $z$. 

If $u,v\in V(H_1)$, then by Lemma~\ref{OreCollOrEmeraldVertex}, there exists either an Ore-collapsible subset or an emerald of $H_2$ disjoint from $z$. If there exists an Ore-collapsible subset $R'$ of $H_2$ disjoint from $z$, then $R'$ is also an Ore-collapsible subset of $G$ with $|R'| < |R|$. By induction, there exists a diamond or emerald of $G$ whose vertices lie in $R'$ and hence in $R$ as desired. If there exists an emerald $E$ of $H_2$ disjoint from $z$, then $E$ is also an emerald of $G$ whose vertices lie in $R$ as desired.

So we may assume that $u,v\in V(H_2)$. But then $V(H_1)$ is an Ore-collapsible subset of $H$ and hence of $G$ where $|V(H_1)|<|R|$. By induction, there exists a diamond or emerald of $G$ whose vertices lie in $V(H_1)$ and hence in $R$ as desired.
\end{proof}

Combining the lemmas above gives the following result.

\begin{lemma}\label{DisjointGem}
Let $H$ be $5$-Ore. For any vertex $v\in V(H)$, there exists a diamond or emerald of $H$ disjoint from $v$. If $H\ne K_5$, then for any subgraph $T$ of $H$, there exists a diamond or emerald disjoint from $T$.
\end{lemma}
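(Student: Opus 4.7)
The plan is to deduce Lemma~\ref{DisjointGem} as an immediate corollary of the three preceding lemmas, packaging their conclusions into the form needed in later sections. The common strategy for both statements is a two-step reduction: first invoke Lemma~\ref{OreCollOrEmeraldVertex} or Lemma~\ref{OreCollOrEmeraldK4} to produce either the desired emerald (which is itself a gem, so we are done) or an Ore-collapsible subset $R$ avoiding the forbidden object; then, in the latter case, apply Lemma~\ref{OreCollToGem} to $R$ to extract a diamond or emerald of $H$ whose vertex set lies inside $R$, and which therefore inherits disjointness from the forbidden object from $R$.

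For the first statement, fix $v\in V(H)$ and apply Lemma~\ref{OreCollOrEmeraldVertex}. If it returns an emerald of $H$ not containing $v$, we are finished. Otherwise it returns an Ore-collapsible subset $R\subseteq V(H)\setminus\{v\}$; applying Lemma~\ref{OreCollToGem} to $R$ (with $G=H$) yields a diamond or emerald of $H$ whose vertices all lie in $R$, which is automatically disjoint from $v$.

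For the second statement, assume $H\ne K_5$ and let $T$ be the relevant $K_4$-subgraph of $H$. The argument is identical in form but starts from Lemma~\ref{OreCollOrEmeraldK4} rather than Lemma~\ref{OreCollOrEmeraldVertex}: either we obtain an emerald disjoint from $T$ directly, or we obtain an Ore-collapsible subset $R$ of $H$ with $V(R)\cap V(T)=\emptyset$, and then Lemma~\ref{OreCollToGem} applied to $R$ produces a diamond or emerald whose vertex set lies in $R$, hence is disjoint from $T$.

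I do not expect any real obstacle; this is essentially a glue lemma. The only point requiring a sentence of care is that the object returned by Lemma~\ref{OreCollToGem} is certified to be a diamond or emerald of the ambient graph $H$ (where the degree-four condition in the definitions of diamond and emerald is measured), not merely of the subgraph induced by $R$. This is exactly what Lemma~\ref{OreCollToGem} asserts, so once the vertex set of the diamond or emerald is contained in $R$, disjointness from $v$ or $T$ is automatic.
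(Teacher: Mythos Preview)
Your proposal is correct and matches the paper's own proof, which simply reads ``Follows from Lemmas~\ref{OreCollOrEmeraldVertex}, \ref{OreCollOrEmeraldK4}, and \ref{OreCollToGem}.'' You have also correctly read the second clause as applying to a $K_4$-subgraph $T$ (the only case supported by Lemma~\ref{OreCollOrEmeraldK4} and the only case used later, in Lemma~\ref{PotCore4}), even though the statement as written says ``any subgraph $T$''.
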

\begin{proof}
Follows from Lemmas~\ref{OreCollOrEmeraldVertex}, \ref{OreCollOrEmeraldK4}, and \ref{OreCollToGem}.
\end{proof}

\section{Potential}

We follow Kostochka and Yancey's proof~\cite{KY2} of Theorem~\ref{OreK} for $k=5$. A key concept for the proof is that of a potential function for subgraphs. For $k=5$, Kostochka and Yancey's potential is as follows. We also define our version of potential as in Theorem~\ref{Main} which incorporates $T(G)$.

\begin{definition}
The \emph{Kostochka-Yancey potential} of a graph $G$ , denoted $p_{KY}(G)$, is $9|V(G)|-4|E(G)|$. The \emph{potential} of a graph $G$, denoted $p(G)$, is $(9+\epsilon)|V(G)|-4|E(G)|-\delta T(G)$. If $R\subseteq V(G)$, then we define $p_{KY}(R) = p_{KY}(G[R])$ and $p_G(R)=p(G[R])$. 
\end{definition}

Theorem~\ref{OreK} for $k=5$ can be restated as follows.

\begin{theorem}\label{Ore5P}
If $G$ is $5$-critical, then $p_{KY}(G) \le 5$.
\end{theorem}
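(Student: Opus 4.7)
The statement is a direct algebraic restatement of Theorem~\ref{OreK} in the case $k=5$, so my plan is simply to unpack the definitions and rearrange. First I would instantiate Theorem~\ref{OreK} at $k=5$: the general bound
\[
|E(G)| \ge \left(\frac{k}{2} - \frac{1}{k-1}\right)|V(G)| - \frac{k(k-3)}{2(k-1)}
\]
becomes
\[
|E(G)| \ge \frac{9}{4}|V(G)| - \frac{5}{4},
\]
since $\frac{5}{2} - \frac{1}{4} = \frac{9}{4}$ and $\frac{5 \cdot 2}{2 \cdot 4} = \frac{5}{4}$.

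Then I would multiply both sides by $4$ and rearrange: the inequality $4|E(G)| \ge 9|V(G)| - 5$ is equivalent to $9|V(G)| - 4|E(G)| \le 5$, which by the definition of $p_{KY}$ is precisely $p_{KY}(G) \le 5$. Since Theorem~\ref{OreK} applies to every $k$-critical graph and we have assumed $G$ is $5$-critical, this completes the proof.

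There is no real obstacle here, since Theorem~\ref{OreK} is stated as a known result from Kostochka and Yancey~\cite{KY1} that we are allowed to invoke; the only point to be careful about is the constant $\frac{k(k-3)}{2(k-1)}$, which should be evaluated correctly at $k=5$ to give $\frac{5}{4}$ (and hence the final upper bound $5$ after clearing the factor of $4$). No case analysis, induction, or structural argument is required at this stage — the lemma is purely a bookkeeping reformulation that sets up the potential language for the rest of the paper.
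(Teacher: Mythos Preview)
Your proposal is correct and matches the paper's treatment exactly: the paper introduces Theorem~\ref{Ore5P} simply as a restatement of Theorem~\ref{OreK} in the case $k=5$, and your argument spells out precisely that substitution and rearrangement. No further justification is needed.
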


Similarly Theorem~\ref{Ore5EqualityP} may be restated as follows.

\begin{theorem}\label{Ore5EqualityP}
If $G$ is $5$-critical, then $p_{KY}(G)\ge 3$ if and only if $G$ is $5$-Ore.
\end{theorem}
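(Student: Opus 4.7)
The plan is to derive this theorem directly from Theorem~\ref{Ore5Equality} by translating between the edge-count formulation and the potential $p_{KY}$. For the forward direction, suppose $G$ is $5$-critical with $p_{KY}(G)\ge 3$. If $G$ were not $5$-Ore, then Theorem~\ref{Ore5Equality} would yield $|E(G)|\ge \frac{9}{4}|V(G)|-\frac{1}{2}$, giving $p_{KY}(G)=9|V(G)|-4|E(G)|\le 2$, a contradiction. Hence $G$ must be $5$-Ore.

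For the backward direction, I would prove the stronger statement that $p_{KY}(G)=5$ for every $5$-Ore graph $G$, which trivially implies $p_{KY}(G)\ge 3$. I would induct on $|V(G)|$ following the recursive definition of $5$-Ore graphs. The base case is the direct calculation $p_{KY}(K_5)=9\cdot 5-4\cdot 10=5$. For the inductive step, suppose $G$ is the Ore-composition of $5$-Ore graphs $G_1$ and $G_2$. From the definition of Ore-composition, exactly one edge of $G_1$ is deleted, exactly one vertex of $G_2$ is split into two vertices, and these are then identified with the endpoints of the deleted edge; this bookkeeping yields $|V(G)|=|V(G_1)|+|V(G_2)|-1$ and $|E(G)|=|E(G_1)|+|E(G_2)|-1$. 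Therefore
\[
p_{KY}(G) = 9\bigl(|V(G_1)|+|V(G_2)|-1\bigr) - 4\bigl(|E(G_1)|+|E(G_2)|-1\bigr) = p_{KY}(G_1)+p_{KY}(G_2)-5,
\]
and by induction this equals $5+5-5=5$, as required.

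Since Theorem~\ref{Ore5Equality} is invoked as a black box, there is no genuine obstacle here; the only thing to be careful about is the Ore-composition accounting, specifically that the $-9$ from losing one vertex and the $+4$ from losing one edge combine to exactly $-5$, matching the constant $p_{KY}(K_5)=5$ so that the inductive identity $p_{KY}(G)=p_{KY}(G_1)+p_{KY}(G_2)-5$ propagates the value $5$ through every Ore-composition.
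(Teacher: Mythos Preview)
Your proposal is correct and matches the paper's treatment: the paper presents Theorem~\ref{Ore5EqualityP} as the potential-function restatement of the cited Theorem~\ref{Ore5Equality} (Kostochka--Yancey), exactly as you derive the forward direction, and the backward direction is the elementary fact (used elsewhere in the paper) that $9|V(G)|-4|E(G)|=5$ for every $5$-Ore graph, which you establish by the same Ore-composition bookkeeping.
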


Here is the key reduction to be used with potential.

\begin{definition}
If $R\subsetneq V(G)$ with $|R|\ge 5$, and $\phi$ is a $4$-coloring of $G[R]$, we define the \emph{$\phi$-identification of $R$ in $G$}, denoted by $G_{\phi}(R)$, to be the graph obtained from $G$ by identifying for each $i\in\{1,2,3,4\}$ the vertices colored $i$ in $R$ to a vertex $x_i$, adding the edges $x_ix_j$ for all $i,j\in\{1,2,3,4\}$ and then deleting parallel edges. 
\end{definition}

\begin{proposition}[Claim 8~\cite{KY2}]\label{Phi}
If $G$ is $5$-critical, $R\subsetneq V(G)$ with $|R|\ge 5$, and $\phi$ is a $4$-coloring of $G[R]$, then $\chi(G_{\phi}(R))\ge 5$.
\end{proposition}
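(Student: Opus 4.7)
The plan is to argue by contradiction. Suppose $\chi(G_\phi(R)) \le 4$, and fix a proper $4$-coloring $\psi$ of $G_\phi(R)$; I will combine $\psi$ with the given coloring $\phi$ of $G[R]$ into a proper $4$-coloring of $G$, contradicting $\chi(G) = 5$. Let $x_1, x_2, x_3, x_4$ be the identification vertices in $G_\phi(R)$, so that $x_i$ absorbs every vertex of $R$ with $\phi$-color $i$. Because the construction installs all six edges $x_ix_j$, the set $\{x_1, x_2, x_3, x_4\}$ induces a $K_4$ in $G_\phi(R)$, and any proper $4$-coloring $\psi$ must assign the four distinct colors to these vertices; after permuting color names, I may assume $\psi(x_i) = i$ for each $i \in \{1,2,3,4\}$.

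Next I would define a candidate $4$-coloring $c$ of $G$ by $c(v) = \phi(v)$ when $v \in R$ and $c(v) = \psi(v)$ when $v \in V(G) \setminus R$, and verify propriety on every edge $uv \in E(G)$. If both endpoints lie in $R$, then $c(u) \ne c(v)$ since $\phi$ is proper on $G[R]$. If both lie outside $R$, then the edge $uv$ persists in $G_\phi(R)$ (the deletion step only discards duplicates), so $\psi(u) \ne \psi(v)$. The heart of the argument is the cross case: for $u \in R$ and $v \in V(G) \setminus R$, the construction converts the edge $uv$ of $G$ into an edge $v\,x_{\phi(u)}$ of $G_\phi(R)$, so $\psi(v) \ne \psi(x_{\phi(u)}) = \phi(u)$, giving $c(v) \ne c(u)$. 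Therefore $c$ is a proper $4$-coloring of $G$, contradicting $\chi(G) \ge 5$.

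The only substantive step is the cross-edge case, and the entire design of $G_\phi(R)$ is engineered to make it work: adding the $K_4$ on $\{x_1, x_2, x_3, x_4\}$ forces any $4$-coloring of $G_\phi(R)$ to be compatible, up to a fixed permutation of colors, with the $\phi$-color classes of $R$, which is exactly what is required to glue $\phi$ and $\psi$ into a single proper coloring of $G$. I expect no further obstacles; the hypotheses $R \subsetneq V(G)$ and $|R| \ge 5$ play no direct role in this argument beyond ensuring that $G_\phi(R)$ is a bona fide graph distinct from $G$ to which $5$-criticality can be meaningfully applied.
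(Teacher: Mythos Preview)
Your argument is correct and is precisely the standard proof: the added $K_4$ on $\{x_1,x_2,x_3,x_4\}$ forces any $4$-coloring of $G_\phi(R)$ to agree (up to a permutation) with $\phi$ on the color classes of $R$, so gluing $\psi$ on $V(G)\setminus R$ to $\phi$ on $R$ yields a proper $4$-coloring of $G$, a contradiction. The paper does not supply its own proof of this proposition but simply cites it as Claim~8 of Kostochka--Yancey~\cite{KY2}; your write-up is exactly the argument found there.
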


Hence $G_{\phi}(R)$ contains a $5$-critical graph and we may extend the set $R$ to a larger set as follows:

\begin{definition}
Let $G$ be a $5$-critical graph, $R\subsetneq V(G)$ with $|R|\ge 5$ and $\phi$ a $4$-coloring of $G[R]$. Now let $W$ be a $5$-critical subgraph of $G_{\phi}(R)$ and $X$ be the graph on the set of vertices $x_i$. Then we say that $R' = (V(W)-V(X))\cup R$ is the \emph{critical extension} of $R$ with \emph{extender} $W$. We call $W\cap X$ the \emph{core} of the extension. 

If in $G$ a vertex in $W-V(X)$ has more neighbors in $R$ than in $V(W \cap X)$, or there exists an edge in $G[V(W)-V(X)]$ that is not in $W-V(X)$, or $W[X]$ is not a complete graph, then we say that the extension is \emph{incomplete}. Otherwise, we say the extension is \emph{complete}. If $R'=V(G)$, we say the extension is \emph{spanning}. If the extension is both complete and spanning, then we say it is \emph{total}.
\end{definition}

Note that every critical extension has a non-empty core as otherwise $G$ would contain a proper non-$4$-colorable subgraph contradicting that $G$ is $5$-critical. 

Kostochka and Yancey proved the following key lemma about their potential in regards to critical extensions.

\begin{lemma}\label{KYExtension}
If $G$ is a $5$-critical graph, $R\subsetneq V(G)$ with $|R|\ge 5$ and $R'$ is a critical extension of $R$ with extender $W$ and core $X$, then
$$p_{KY}(R')\le p_{KY}(R) + p_{KY}(W) - 9/14/15/12$$ when $|X|$ is $1/2/3/4$ respectively.
\end{lemma}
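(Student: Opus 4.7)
The plan is to prove the bound by a direct comparison of $p_{KY}(R')$ to $p_{KY}(R)+p_{KY}(W)$, using the defining formula $p_{KY}(H)=9|V(H)|-4|E(H)|$. Write $k=|X|\in\{1,2,3,4\}$ for the size of the core. Since every vertex of $W$ outside the core is inherited from $V(G)-R$, the union $V(R')=R\cup(V(W)-V(X))$ is disjoint, giving immediately $|V(R')|=|R|+|V(W)|-k$. The real task is then to bound $|E(G[R'])|$ from below in terms of $|E(G[R])|+|E(W)|$ up to a controlled loss.

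To do this, I would partition $E(W)$ into three pieces: edges inside $V(W)-V(X)$, edges inside the core (call this count $c$), and cross-edges between the two. The analogous partition of $E(G[R'])$ is: edges inside $R$ (exactly $E(G[R])$), edges inside $V(W)-V(X)$ in $G$, and edges between $R$ and $V(W)-V(X)$ in $G$. Each edge of $W$ inside $V(W)-V(X)$ is an edge of $G_\phi(R)$ on vertices of $V(G)-R$ and hence an edge of $G$, so this piece can only be larger in $G[R']$ than in $W$. Each cross-edge $vx_i\in E(W)$ records \emph{at least} one $G$-edge from $v$ to an $R$-vertex colored $i$, because parallel edges were deleted when forming $G_\phi(R)$; hence the cross piece too is at least as large in $G[R']$ as in $W$. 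The interior core edges of $W$, meanwhile, have no counterpart in $G[R']$ at all, since the $x_i$'s are artificial merged vertices. Combining these three inequalities, a one-line arithmetic yields
\[
p_{KY}(R')\;\le\;p_{KY}(R)+p_{KY}(W)-9k+4c.
\]

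Finally, since the core $W\cap X$ is a subgraph of the $K_4$ placed on $\{x_1,x_2,x_3,x_4\}$ in $G_\phi(R)$, we have $c\le\binom{k}{2}$, so $-9k+4c\le -9k+4\binom{k}{2}$, which evaluates to $-9,-14,-15,-12$ at $k=1,2,3,4$ respectively. That matches the claimed constants exactly. The only step that requires care is the edge-partition bookkeeping: one must remember that identifying vertices and then deleting parallel edges can only \emph{reduce} the edge count of $W$ compared to the family of $G$-edges it represents, and that the only edges of $W$ without a $G[R']$-analogue are the interior core edges. Beyond this, the proof is arithmetic together with the trivial bound $\binom{k}{2}$ on the edge count of a $k$-vertex graph.
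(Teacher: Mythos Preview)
Your proof is correct and follows essentially the same approach as the paper. The paper does not give a standalone proof of this lemma (it is attributed to Kostochka and Yancey), but the identical counting argument appears in the paper's proof of the companion Lemma~\ref{Extension}: there one finds $|R'|=|R|+|V(W)|-|X|$ and $|E(G[R'])|\ge |E(G[R])|+|E(W)|-\binom{|X|}{2}$, which is exactly your edge-partition bookkeeping with the bound $c\le\binom{k}{2}$ already substituted in.
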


Here is the corresponding lemma bounding our potential for critical extensions in terms of the original set and the extender. Note the use of the vertex-disjointness of $T(G)$.

\begin{lemma}\label{Extension}
If $G$ is a $5$-critical graph, $R\subsetneq V(G)$ with $|R|\ge 5$ and $R'$ is a critical extension of $R$ with extender $W$ and core $X$, then 

$$p_G(R')\le p_G(R) + p(W) - f(|X|) + \delta(T(W)-T(W\setminus X)),$$
where $f(|X|)=p(K_{|X|})-T(X)$.\\

Furthermore, 
$$p_G(R')\le p_G(R)+p(W) -9-\epsilon+\delta.$$
\end{lemma}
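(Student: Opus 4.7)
The plan is to prove the first inequality by accounting separately for how the three terms in $p_G$ (vertices, edges, and $T$) change from $R$ to $R'$, and then derive the second inequality by a short case check on $t = |X| \in \{1,2,3,4\}$. Set $t = |X|$ and let $C = W[V(X)\cap V(W)]$ denote the core. First, since vertices of $G_\phi(R)$ outside $V(X)$ are literally vertices of $G\setminus R$, the set $R' = R \cup (V(W)\setminus V(X))$ is a disjoint union in $V(G)$, giving $|V(R')| = |V(R)| + |V(W)| - t$. Second, every edge of $W$ not in $C$ corresponds to at least one distinct edge of $G$ lying in $G[R']$: an edge between two vertices of $V(W)\setminus V(X)$ is an edge of $G$ that survives the identification unchanged, while an edge from $v\in V(W)\setminus V(X)$ to some $x_i$ was produced by at least one edge of $G$ from $v$ to a vertex of $R$ colored $i$; together with the edges already in $G[R]$, this gives $|E(G[R'])| \ge |E(G[R])| + |E(W)| - |E(C)| \ge |E(G[R])| + |E(W)| - \binom{t}{2}$. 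Third, because $R$ and $V(W)\setminus V(X)$ are disjoint in $V(G)$, any optimal vertex-disjoint family of $K_3$'s and $K_4$'s in $G[R]$ can be combined with any such family in $W\setminus X\subseteq G[R']$ to yield a valid vertex-disjoint family in $G[R']$, hence $T(G[R']) \ge T(G[R]) + T(W\setminus X)$.

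Substituting these three bounds into $p_G(R')-p_G(R) = (9+\epsilon)(|V(R')|-|V(R)|) - 4(|E(G[R'])|-|E(G[R])|) - \delta(T(G[R'])-T(G[R]))$ and regrouping against $p(W) = (9+\epsilon)|V(W)|-4|E(W)|-\delta T(W)$ yields the first claimed inequality, with the role of $f(|X|)$ played by $(9+\epsilon)|X|-4\binom{|X|}{2}$ (equivalently $p(K_{|X|})+\delta T(K_{|X|})$). For the second inequality I would add the crude bound $T(W)-T(W\setminus X)\le t$: removing any single vertex of $X$ from an optimal disjoint family for $T(W)$ drops $T$ by at most $1$, since a $K_3$ becomes a non-contributing $K_2$ and a $K_4$ becomes a $K_3$ (a drop of exactly $2-1=1$). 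Inserting this reduces the claim to verifying $f(t) - \delta t \ge 9+\epsilon-\delta$ for $t\in\{1,2,3,4\}$; with $\epsilon = 1/21$ and $\delta = 8\epsilon$ this is a one-line arithmetic check (equivalent to $5\ge 7\epsilon$, $6\ge 14\epsilon$, $3\ge 21\epsilon$ for $t=2,3,4$), and is tight precisely at $t=1$.

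The main obstacle is the edge-counting step: one must verify that the $|E(W)| - |E(C)|$ edges identified above are pairwise distinct in $G[R']$ and distinct from $E(G[R])$, despite the identification potentially collapsing many parallel edges of $G$ onto a single edge of $G_\phi(R)$. This is a careful but standard bookkeeping argument nearly identical to Kostochka and Yancey's proof of Lemma~\ref{KYExtension}, with the only modification being the extra tracking of $T$. The genuinely new ingredients are the two $T$-inequalities: $T(G[R'])\ge T(G[R]) + T(W\setminus X)$ exploits that $R$ and $V(W)\setminus V(X)$ are disjoint in $V(G)$, while $T(W) - T(W\setminus X)\le |X|$ is what forces the definition of $T(G)$ to use vertex-disjoint (rather than edge-disjoint) cliques --- deleting a vertex of $X$ destroys at most one clique in a vertex-disjoint family but could destroy arbitrarily many in an edge-disjoint one, which would blow up the case check.
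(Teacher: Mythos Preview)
Your proposal is correct and follows essentially the same approach as the paper: both proofs count vertices, edges, and $T$ separately across the decomposition $R' = R \cup (V(W)\setminus X)$, combine into the first inequality, and then use the crude bound $T(W)-T(W\setminus X)\le |X|$ together with a case check on $|X|\in\{1,2,3,4\}$ to obtain the second. Your exposition is more detailed (in particular your justification of the edge count and of the one-vertex-at-a-time bound on $T$), but there is no substantive difference in method.
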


\begin{proof}
Each vertex of $G[R']$ is a vertex of $G[R]$ or $W\setminus X$, while each of edge of $G[R]$ and $W-E(G_{\phi}[X])$ is an edge of $G[R']$. 
So $\lvert R' \rvert = \lvert R \rvert + \lvert V(W) \rvert - \lvert X \rvert$, and $\lvert E(G[R']) \rvert \geq \lvert E(G[R]) \rvert + \lvert E(W) \rvert - {{\lvert X \rvert}\choose{2}}$.
Furthermore, $T(R')\ge T(R)+T(W\setminus X)$.
Therefore, 
$$p_G(R') \leq p_G(R)+p(W) - (9+\epsilon)\lvert X \rvert + 4 {\lvert X \rvert \choose 2} + \delta(T(W) - T(W \setminus X)).$$
Note that $f(X) = 9\lvert X \rvert - 4 {\lvert X \rvert \choose 2} +|X|\epsilon$.
Observe that $T(W)-T(W\setminus X) \le |X|$. 
Hence $p_G(R')\le p_G(R)+p(W)-9-\epsilon+\delta$ since $\epsilon \le \delta \le 3$.
\end{proof}

Note that $f(1)=9+\epsilon$, $f(2) = 14+2\epsilon$, $f(3)=15+3\epsilon$, and $f(4)=12+4\epsilon$. 

\subsection{Collapsible Sets}

Here is a crucial definition.

\begin{definition}
Let $G$ be a graph and $R\subsetneq V(G)$ such that $|R|\ge 5$. The \emph{boundary} of $R$ is the set of vertices in $R$ with neighbors in $G-R$. If $G$ is $5$-critical, we say $R$ is \emph{collapsible} if in every $4$-coloring of $G[R]$ all vertices in the boundary of $R$ receive the same color. If $R$ is collapsible, then we define the \emph{critical complement} of $R$ to be the graph obtained by identifying the boundary of $R$ to one vertex $v$ and deleting the rest of $R$. We call $v$ the \emph{special vertex} of $W$.

Note then that the boundary of $R$ is an independent set and for any $u,v$ in the boundary of $R$, $G[R]+uv$ contains a $5$-critical subgraph. We say a collapsible subset is \emph{tight} if for any $u,v$ in the boundary of $R$, $G[R]+uv$ is $5$-critical.
\end{definition}

Here is an easy proposition.

\begin{proposition}\label{CritComp}
If $R$ is collapsible, then the critical complement $W$ of $R$ is $5$-critical.
\end{proposition}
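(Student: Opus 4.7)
The plan is to verify the two defining properties of $5$-criticality for $W$: that $\chi(W) \geq 5$ and that every proper subgraph of $W$ is $4$-colorable. Both arguments hinge on the fact that collapsibility of $R$ lets colorings on the ``outside'' $V(G)\setminus R$ (together with the special vertex $v$) be glued with colorings of $G[R]$ in a controlled way, with the boundary serving as the synchronization interface.

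First I would show $\chi(W) \geq 5$. Suppose for contradiction $\psi$ is a $4$-coloring of $W$. Since $R\subsetneq V(G)$ and $G$ is $5$-critical, $G[R]$ is a proper subgraph of $G$ and admits a $4$-coloring $\phi$; by collapsibility of $R$, $\phi$ assigns a single common color to every boundary vertex. After permuting colors in $\phi$, I may assume this common color equals $\psi(v)$. Pasting $\phi$ on $R$ with $\psi$ on $V(G)\setminus R$ yields a $4$-coloring of $G$, since every cross-edge goes from a boundary vertex (colored $\psi(v)$) to some $u\in V(G)\setminus R$, and each such $u$ satisfies $vu\in E(W)$ by the construction of $W$, hence $\psi(u)\neq \psi(v)$. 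This contradicts $\chi(G)=5$.

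Next I would verify that every proper subgraph of $W$ is $4$-colorable. Since every vertex of $W$ has a neighbor ($v$ because the boundary has edges into $V(G)\setminus R$, and every other vertex because it has $G$-degree at least $4$), it suffices to show $\chi(W-e)\leq 4$ for each edge $e\in E(W)$. In Case 1, both endpoints of $e$ lie in $V(G)\setminus R$, so $e\in E(G)$; I take a $4$-coloring $\chi$ of $G-e$, observe that its restriction to $R$ is a $4$-coloring of $G[R]$ so all boundary vertices receive a single color $c$, and define $\psi(v)=c$ with $\psi=\chi$ elsewhere. In Case 2, $e=vu$ for some $u\in V(G)\setminus R$; then some boundary vertex $b$ satisfies $bu\in E(G)$, and I repeat the argument using a $4$-coloring of $G-bu$ (whose restriction to $R$ is again a $4$-coloring of $G[R]$). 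In either case, for every edge $vu'$ surviving in $W-e$, a witnessing boundary vertex $b'$ with $b'u'\in E(G)$ still has its edge present in the deleted graph, so $\chi(u')\neq \chi(b')=c=\psi(v)$.

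The main obstacle is really just bookkeeping: making sure that the single edge deleted from $G$ to obtain the $4$-coloring does not destroy a boundary-witness needed to verify properness at $v$. In Case 1 the deleted edge lies entirely in $V(G)\setminus R$, so it is not a boundary-witness; in Case 2 the deleted boundary-witness edge $bu$ is precisely the edge $e=vu$ removed from $W$, so no verification is needed at $u$. Collapsibility of $R$ does the rest of the work by guaranteeing that in every $4$-coloring of $G[R]$ the boundary vertices receive a single common color.
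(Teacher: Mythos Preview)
Your proof is correct. The paper does not give a detailed argument for this proposition; it labels it an ``easy proposition'' and follows it only with the remark that $V(G)$ is then a $W$-critical extension of $R$ with core of size one, which is more an observation about how $W$ fits into the critical-extension framework than a proof that $W$ is $5$-critical. Your direct verification via gluing colorings across the boundary is precisely the natural argument the paper is taking for granted, and your bookkeeping in Case~2 (that the deleted witness edge $bu$ corresponds exactly to the removed edge $vu$ of $W$, so no remaining edge at $v$ loses its witness) is handled correctly.
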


Indeed $V(G)$ is a $W$-critical extension of $R$ and that extension has a core of size one and is complete. Furthermore, this is the only critical extension of $R$. The converse is also true as the next proposition shows.

\begin{proposition}\label{Coll}
$R$ is collapsible if and only if for every critical extension $R'$, the extension is complete, spanning and has a core of size one.
\end{proposition}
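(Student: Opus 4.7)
The plan is to prove both directions by carefully analyzing the structure of $G_\phi(R)$ for a $4$-coloring $\phi$ of $G[R]$.

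For the forward direction, I would assume $R$ is collapsible and take any critical extension with $4$-coloring $\phi$, extender $W$, and core $V(W\cap X)$. By collapsibility all boundary vertices receive a common color $i_0$ under $\phi$. The key observation is that for $j\neq i_0$, no vertex of $V(G)-R$ is adjacent to $x_j$ in $G_\phi(R)$, so $\deg_{G_\phi(R)}(x_j) = 3$ (only the edges to the other $x_k$'s); since $W$ is $5$-critical and thus has minimum degree at least $4$, no such $x_j$ can lie in $V(W)$. Hence $V(W) \subseteq V(G-R)\cup\{x_{i_0}\}$. But the induced subgraph on $V(G-R)\cup\{x_{i_0}\}$ is precisely the critical complement of $R$, which is $5$-critical by Proposition~\ref{CritComp}, so as a $5$-critical subgraph of a $5$-critical graph, $W$ must equal this subgraph. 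This gives core $\{x_{i_0}\}$ of size one, spanning (since $R'=V(G-R)\cup R=V(G)$), and completeness: condition (a) holds because the only $x_j$ adjacent to any vertex of $V(G-R)$ in $G_\phi(R)$ is $x_{i_0}\in V(W\cap X)$; condition (b) holds because the critical complement retains all edges of $G[V(G-R)]$; condition (c) is trivial since $W[X]$ is a single vertex.

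For the converse, I would argue the contrapositive: assume $R$ is not collapsible and produce a critical extension that is either incomplete, not spanning, or has core of size not equal to one. Pick a $4$-coloring $\phi$ of $G[R]$ in which the boundary is not monochromatic, and let $u_1,u_2$ be boundary vertices with $\phi(u_1)\neq \phi(u_2)$. By Proposition~\ref{Phi} a $5$-critical subgraph $W$ of $G_\phi(R)$ exists, yielding a critical extension. If its core has size different from one we are done immediately (noting that core $=0$ is impossible since then $W\subseteq G-R$, which is $4$-colorable as a proper subgraph of the $5$-critical $G$). So assume $V(W\cap X)=\{x_{j^*}\}$; without loss of generality $\phi(u_1)\neq j^*$. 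Let $v\in V(G)-R$ be any neighbor of $u_1$ in $G$ (which exists because $u_1$ lies on the boundary). If $v\in V(W)$, then $v\in V(W)-V(X)$ is adjacent in $G_\phi(R)$ to $x_{\phi(u_1)}\notin V(W\cap X)$, violating completeness (condition (a)); if $v\notin V(W)$, then $v\notin(V(W)-V(X))\cup R=R'$, so $R'\neq V(G)$ and the extension is not spanning.

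The main subtlety to work out is precisely the interpretation of the completeness condition (a) in the non-collapsible case, i.e.\ that a $V(G)-R$ vertex with an $R$-neighbor whose color does not appear in the core really does witness incompleteness; once that is pinned down, the case analysis is forced. A minor bookkeeping point worth checking is that a $5$-critical graph has no proper $5$-critical subgraph, which is what lets us conclude $W$ equals the critical complement in the forward direction rather than being a possibly strict subgraph of it.
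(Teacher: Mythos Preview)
Your proof is correct. The paper does not actually supply a proof of this proposition; it is stated immediately after Proposition~\ref{CritComp} with only the one-sentence remark that the critical complement furnishes the unique critical extension of a collapsible set. Your argument fleshes out precisely that idea for the forward direction (via the observation that each $x_j$ with $j\neq i_0$ has degree $3$ in $G_\phi(R)$ and so cannot lie in a $5$-critical $W$), and then gives the natural contrapositive for the converse.

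On the subtlety you flagged about condition~(a) in the converse: it goes through. If $v\in V(W)-V(X)$ is adjacent in $G$ to $u_1\in R$ with $\phi(u_1)\neq j^*$, then either $v$ is not adjacent to $x_{j^*}$ in $G_\phi(R)$, giving at least one neighbor in $R$ versus zero in $V(W\cap X)=\{x_{j^*}\}$; or $v$ is adjacent to $x_{j^*}$, which forces a second neighbor $u'\in R$ with $\phi(u')=j^*$, giving at least two neighbors in $R$ versus one in $V(W\cap X)$. In either case the inequality is strict and the extension is incomplete.
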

 
As we shall see in the next section, we are interested in proper subgraphs of relatively smallest potential. What properties do such subsets satisfy? Well, given Lemma~\ref{Extension}, such subgraphs must have extensions yielding a minimum decrease, and hence all of the subgraph's extensions must have cores of size one. Moreover if such a set had an extension that is incomplete, then $p_G(R')\le p_G(R)+p(W) - 13-\epsilon+\delta$, leading to larger decrease. So we may assume all extensions of are complete as well. Furthermore, assuming Theorem~\ref{Main} is true inductively, extensions decrease potential by about 4. Hence all extension of such a set are spanning, as otherwise the extension itself would have smaller potential. Thus all extensions of such sets are complete, spanning and have cores of size one. By Propisition~\ref{Coll}, this is equivalent to being collapsible; hence the importance of collapsible sets.

As for the potential of collapsible sets, we now have the following statement.

\begin{proposition}\label{CollPot}
If $R$ is a collapsible subset of a $5$-critical graph $G$ and $W$ is the critical complement of $R$, then 
$$p_G(R)\ge p(G)-p(W)+9+\epsilon-\delta$$
\end{proposition}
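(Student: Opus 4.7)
The plan is to compare the three potentials term-by-term by exploiting the explicit construction of $W$. Since $W$ is built by identifying the boundary $B$ of $R$ to a single special vertex $v$ and deleting $R\setminus B$, we have immediately that $|V(G)| = |V(W)| + |R| - 1$. For edges, every edge of $W$ is either entirely inside $V(G)\setminus R$ (and so is an actual edge of $G$) or is incident to $v$; the $v$-incident edges arise from the $B$-to-$(V(G)\setminus R)$ edges of $G$ after collapsing parallels, giving $|E(W)| \le |E(G)| - |E(G[R])|$. Substituting these two relations into the definitions of $p(G)$, $p(W)$, and $p_G(R)$ reduces the proposition to the single inequality
$$T(G[R]) + T(W) \le T(G) + 1.$$

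The main task is to prove this $T$-inequality. I would let $H_R\subseteq G[R]$ and $H_W\subseteq W$ be disjoint unions of triangles and $K_4$'s realizing $T(G[R])$ and $T(W)$ respectively, and let $C$ be the (possibly empty) component of $H_W$ containing $v$, of weight $t\in\{0,1,2\}$. Every component of $H_W$ other than $C$ lives inside $V(G)\setminus R$ and is a genuine subgraph of $G$; together with $H_R$ these form a valid vertex-disjoint family in $G$ of weight $T(G[R]) + T(W) - t$, so $T(G) \ge T(G[R]) + T(W) - t$. When $t\le 1$ this already gives what we want. The only case needing care is $t=2$, when $C = vabc$ is a $K_4$: here $abc$ is a triangle of $W$ that lies in $V(G)\setminus R$, and is therefore a triangle of $G$ vertex-disjoint from every component retained above. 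Adding $abc$ restores a weight of $1$, so again $T(G) \ge T(G[R]) + T(W) - 1$.

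Plugging the three relations into $p(G) - p(W) = (9+\epsilon)(|V(G)|-|V(W)|) - 4(|E(G)| - |E(W)|) - \delta(T(G) - T(W))$ and simplifying yields $p(G) - p(W) \le p_G(R) - (9+\epsilon) + \delta$, which rearranges to the claim. The main (and really only) obstacle is the $t=2$ case of the $T$-inequality, and it is precisely there that the argument uses the vertex-disjoint convention in the definition of $T$: identifying all of $B$ to $v$ could, in principle, amalgamate many edge-disjoint triangles through $v$, but under vertex-disjointness the loss is at most the weight of a single component at $v$, and in the costliest ($K_4$) case one unit of weight is always recoverable from the triangle on the three non-$v$ vertices, which sits in $V(G)\setminus R$ and is automatically disjoint from everything already chosen.
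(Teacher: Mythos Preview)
Your argument is correct. The vertex and edge counts are right, and your $T$-inequality $T(G[R]) + T(W) \le T(G) + 1$ is exactly what is needed; the $t=2$ case is handled properly since the triangle $abc$ on the non-$v$ vertices of the $K_4$ lies in $V(G)\setminus R$ and is disjoint from both $H_R$ and the other components of $H_W$.

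The paper's own proof is a one-line application of Lemma~\ref{Extension}: since $R$ is collapsible, $V(G)$ is the $W$-critical extension of $R$ with core of size one, so the ``furthermore'' part of Lemma~\ref{Extension} gives $p(G)\le p_G(R)+p(W)-9-\epsilon+\delta$, which rearranges to the claim. Your direct computation is precisely the specialization of the proof of Lemma~\ref{Extension} to this situation: your inequality $T(G[R])+T(W)\le T(G)+1$ is the instance $T(W)-T(W\setminus X)\le |X|$ combined with $T(R')\ge T(R)+T(W\setminus X)$ for $|X|=1$ and $R'=V(G)$. So the two arguments are the same at heart; the paper just factors it through the general extension lemma.
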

\begin{proof}
Follows from Lemma~\ref{Extension} since $V(G)$ is the $W$-critical extension of $R$ which is complete and has a core of size one by Proposition~\ref{Coll}.
\end{proof}

Here is another nice corollary of Proposition~\ref{Coll} about subsets of $5$-Ore graphs which we will need later.

\begin{lemma}\label{Coll5Ore}
If $G$ is $5$-Ore and $R\subsetneq V(G)$ such that $|R|\ge 5$ and $p_{KY}(R) < 12$, then $R$ is collapsible and hence $p_{KY}(R)=9$.
\end{lemma}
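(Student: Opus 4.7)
The plan is to proceed by strong induction on $|V(G)| - |R|$. A direct computation shows that if $|V(G)| - |R| \le 3$, then $p_{KY}(R) \ge 12$: for instance, when $V(G)\setminus R = \{v\}$, using $\deg_G(v) \ge 4$ (minimum degree of a $5$-critical graph) gives $p_{KY}(R) = p_{KY}(G) - 9 + 4\deg_G(v) \ge 5 - 9 + 16 = 12$, and the analogous counts (summing degrees and subtracting possible internal edges) handle $|V(G)\setminus R|\in\{2,3\}$. Hence the base case is vacuous.

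For the inductive step with $|V(G)| - |R| \ge 4$, I first show $R$ is collapsible. Assume otherwise; then by Proposition~\ref{Coll} there is a critical extension $R'$ of $R$ with extender $W$ and core $X$ that is either incomplete, non-spanning, or has $|X|\ge 2$. Using $p_{KY}(W)\le 5$ (valid for every $5$-critical graph), Lemma~\ref{KYExtension} yields $p_{KY}(R') \le p_{KY}(R) + 5 - c(|X|)$ where $(c(1),c(2),c(3),c(4))=(9,14,15,12)$. If $R' = V(G)$, then $p_{KY}(R') = 5$: when $|X| \ge 2$ the bound forces $p_{KY}(R) \ge c(|X|) \ge 12$, and when $|X|=1$ with the extension incomplete, a refinement of the edge count in the proof of Lemma~\ref{KYExtension} (each of the three incompleteness criteria contributes at least one extra edge to $|E(G[R'])|$ beyond the lower bound $|E(G[R])| + |E(W)| - \binom{|X|}{2}$) improves the bound by $-4$, forcing $p_{KY}(R) \ge 13$; both contradict $p_{KY}(R) < 12$. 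If instead $R' \subsetneq V(G)$, then $p_{KY}(R') \le p_{KY}(R) + 5 - 9 < 8$, while $|R'| \ge 5$ and $|V(G)| - |R'| < |V(G)| - |R|$, so the inductive hypothesis (or the base-case computation if $|V(G)| - |R'| \le 3$) gives $p_{KY}(R') \ge 9$, a contradiction. Therefore $R$ must be collapsible.

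With $R$ collapsible, let $W$ denote its critical complement; by Proposition~\ref{CritComp} $W$ is $5$-critical, and by Proposition~\ref{Coll} the extension of $R$ to $V(G)$ using $W$ is complete, spanning, and has $|X|=1$. Because the extension is complete the inequality in Lemma~\ref{KYExtension} becomes an equality (the edge-count inequality in its proof is saturated when no incompleteness criterion is triggered), so $5 = p_{KY}(G) = p_{KY}(R) + p_{KY}(W) - 9$, i.e.\ $p_{KY}(R) = 14 - p_{KY}(W)$. Since $p_{KY}(R) < 12$ we deduce $p_{KY}(W) > 2$, and then Theorem~\ref{Ore5EqualityP} forces $W$ to be $5$-Ore, giving $p_{KY}(W) = 5$. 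Hence $p_{KY}(R) = 9$, as required.

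The main obstacle will be making the refinement of Lemma~\ref{KYExtension} in the spanning, $|X|=1$, incomplete subcase precise: each of the three incompleteness criteria has to be shown to produce at least one additional edge of $G[R']$ beyond the standard lower-bound edge count, yielding the extra $-4$ in the potential bound. The rest of the argument is a clean induction, but this bookkeeping (implicit in the Kostochka--Yancey framework) has to be spelled out to rule out exactly one subcase.
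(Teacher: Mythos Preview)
Your argument is correct and follows essentially the same route as the paper: both use Proposition~\ref{Coll} to split into the non-spanning/incomplete/large-core cases and apply Lemma~\ref{KYExtension} with $p_{KY}(W)\le 5$, the only difference being that the paper packages the non-spanning case as the one-line fact ``$p_{KY}(S)\ge 9$ for every $S\subsetneq V(G)$'' while you unpack it as an explicit induction on $|V(G)|-|R|$ with a direct base-case computation. Your final paragraph deriving $p_{KY}(R)=9$ from equality in the complete extension together with Theorem~\ref{Ore5EqualityP} is a nice addition that the paper leaves implicit.
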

\begin{proof}
Suppose for a contradiction that $R$ is not collapsible. Note that it follows from Lemma~\ref{KYExtension} that $p_{KY}(S)\ge 9$ for every $S\subsetneq V(G)$. Now by Proposition~\ref{Coll} since $|R|\ge 5$, there exists a critical extension $R'$ of $R$ such that the extension is either incomplete, has a core of size at least two or $R'\ne V(G)$. First suppose $R'\ne V(G)$. Then $p_{KY}(R')\ge 9$ as noted above. By Lemma~\ref{KYExtension}, $p_{KY}(R)\ge p_{KY}(R')+9-p_{KY}(W)$. By Theorem~\ref{Ore5P}, $p_{KY}(W)\le 5$ and hence $p_{KY}(R)\ge 9+9-5 = 13$, a contradiction. So we may suppose $R'=V(G)$. Next suppose that the extension is incomplete. Then $p_{KY}(R') \le p_{KY}(R) + p_{KY}(W) - 9 - 4$.  Hence $p_{KY}(R)\ge p_{KY}(R')+13 -p_{KY}(W)$. Now $p_{KY}(R')=p_{KY}(G)=5$ while $p_{KY}(W)\le 5$ by Theorem~\ref{Ore5P}. Thus $p_{KY}(R)\ge 13$, a contradiction. So we may assume that the extension has a core of size at least two. By Lemma~\ref{KYExtension}, $p_{KY}(R') \le p_{KY}(R) + P_{KY}(W) - 12$. Hence, $p_{KY}(R)\ge p_{KY}(R') - p_{KY}(W) + 12$. Yet $p_{KY}(R')=p_{KY}(G)=5$ while $p_{KY}(W)\le 5$ by Theorem~\ref{Ore5P}. Thus $p_{KY}(R)\ge 12$, a contradiction.  
\end{proof}

\section{Structures of a Minimum Counterexample}
In this section, we attempt to understand the structure of a minimum counterexample to Theorem~\ref{Main}. 

For the proof, we will also need to understand the structure of graphs which are not counterexamples but are close in potential to a counterexample. To that end, we make the following definitions.

\begin{definition}
A \emph{cluster} in a graph $G$ is a maximal set of degree four vertices with the same closed neighborhood.

A graph $H$ is \emph{smaller} than a graph $G$ if either $|V(H)|< |V(G)|$, or $|V(G)|=|V(H)|$ and $|E(H)|>|E(G)|$, or, $|V(G)|=|V(H)|$ and $|E(G)|=|E(H)|$ and $H$ precedes $G$ in the lexicographical ordering of clusters sizes by decreasing value.
\end{definition}

\begin{definition}
A $5$-critical graph $G$ is \emph{good} if every $5$-critical graph smaller than $G$ satisfies Theorem~\ref{Main}. 
Let $Q=\delta$. A graph $G$ is \emph{tight} if $G$ is good and $p(G)\ge 5-P-Q$.
\end{definition}

\begin{lemma}\label{ExtDec}
If $G$ is good, $R\subsetneq V(G)$, $|R|\ge 5$ and $R'$ is a critical extension of $R$, then $p_G(R)\ge p_G(R')+4-\delta+4\epsilon$. Furthermore $p_G(R')\ge p(G)$ and hence $p_G(R)\ge p(G)+4-\delta+4\epsilon$.
\end{lemma}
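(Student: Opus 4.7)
The plan is to apply Lemma~\ref{Extension} and then case-analyze the bound on $p(W)$ guaranteed by the goodness of $G$. Since $|R|\ge 5$, we have $|V(G_\phi(R))|\le|V(G)|-1$, so the extender $W\subseteq G_\phi(R)$ is a $5$-critical graph strictly smaller than $G$; hence $W$ satisfies Theorem~\ref{Main}. By Lemma~\ref{Extension}, writing $X$ for the core,
\[p_G(R')\le p_G(R)+p(W)-f(|X|)+\delta(T(W)-T(W\setminus X)),\]
so the desired inequality $p_G(R)\ge p_G(R')+4-\delta+4\epsilon$ reduces to
\[p(W)+\delta(T(W)-T(W\setminus X))\le f(|X|)-4+\delta-4\epsilon.\]
I will use the easy bound $T(W)-T(W\setminus X)\le|X|$ throughout, which follows by peeling off the vertices of $X$ one at a time and noting that deleting a single vertex decreases $T$ by at most $1$ (if the vertex is in a $K_4$ of an optimal family, the remaining triangle restores weight $1$).

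I verify the target inequality in three cases, corresponding to the three conclusions of Theorem~\ref{Main} applied to $W$. First, if $W=K_5$, substitute $p(W)=5+5\epsilon-2\delta$, $T(W)=2$, and the exact values $T(W\setminus X)\in\{2,1,0,0\}$ for $|X|\in\{1,2,3,4\}$; using $f(1)=9+\epsilon$, $f(2)=14+2\epsilon$, $f(3)=15+3\epsilon$, and $f(4)=12+4\epsilon$, the inequality reduces to a direct arithmetic check. Second, if $W$ is $5$-Ore with $W\ne K_5$, then $p(W)\le 5+|V(W)|\epsilon-(2+\tfrac{|V(W)|-1}{4})\delta$ with $|V(W)|\ge 9$; since $\delta=8\epsilon$, the $|V(W)|$-coefficient $\epsilon-\delta/4=-\epsilon$ makes this bound decrease in $|V(W)|$, so the worst case is $|V(W)|=9$, giving $p(W)\le 5-23\epsilon$, after which the target follows using $T(W)-T(W\setminus X)\le|X|$. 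Third, if $W$ is not $5$-Ore, then $p(W)\le 5-P=5-48\epsilon$ and the target inequality is immediate.

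For the \emph{furthermore} claim that $p_G(R')\ge p(G)$, I induct on $|V(G)|-|R'|$. The base case $R'=V(G)$ is trivial. For $R'\subsetneq V(G)$, note that $|R'|=|R|+|V(W)|-|X|\ge 5+1=6$, so I may pick a $4$-coloring $\phi'$ of $G[R']$ and apply Proposition~\ref{Phi} to obtain a critical extension $R''\supsetneq R'$ (strict containment holds because no $5$-critical graph has $\le 4$ vertices, so the new extender contributes a vertex outside the new core). Applying the first part of the lemma to $(R',R'')$ and the inductive hypothesis to $R''$ gives $p_G(R')\ge p_G(R'')+4-\delta+4\epsilon\ge p(G)$, using $4-\delta+4\epsilon=4-\tfrac{4}{21}>0$. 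Combining this with the first part of the lemma yields the final conclusion.

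The main obstacle is Case~2, where $p(W)$ is not bounded by a constant but grows linearly in $|V(W)|$. The resolution relies on Lemma~\ref{5OrePot}, which provides linearly-many vertex-disjoint triangles in $5$-Ore graphs, together with the choice $\delta=8\epsilon$, which is calibrated exactly so that the $\delta T(W)$ term overpowers the $|V(W)|\epsilon$ term in $p(W)$.
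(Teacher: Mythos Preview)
Your proof is correct, but it is considerably more laborious than the paper's. The paper avoids all case analysis by invoking the \emph{second} inequality of Lemma~\ref{Extension}, namely $p_G(R')\le p_G(R)+p(W)-9-\epsilon+\delta$, which has already absorbed the dependence on $|X|$; it then observes that among the three bounds in Theorem~\ref{Main} the largest is the $K_5$ value $p(W)\le 5+5\epsilon-2\delta$ (with $\delta=8\epsilon$ the other two cases give at most $5-23\epsilon$ and $5-48\epsilon$). Substituting yields $p_G(R')\le p_G(R)-4+4\epsilon-\delta$ in one line, with no need to split on $|X|$ or on the type of $W$. Your route---using the sharper first inequality of Lemma~\ref{Extension} and then checking each $(|X|,\text{type of }W)$ pair---recovers the same conclusion but does by hand work that Lemma~\ref{Extension} already did. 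Your treatment of the ``furthermore'' clause by iterating critical extensions until $R'=V(G)$ matches the paper's argument.
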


\begin{proof}
Suppose that $R'$ is a critical extension with extender $W$. 
As $G$ is good, $p(W)\le 5 +5\epsilon-2\delta$. 
By Lemma~\ref{Extension}, $p_G(R')\le p_G(R) + p(W) -  9 + \delta - \epsilon \le p_G(R)  - 4 + 4\epsilon-\delta$ as desired. 
By repeatedly applying this result to further critical extensions, we find that $p(G)\le p_G(R') \le p_G(R)-4+4\epsilon-\delta$.
\end{proof}

\begin{lemma}\label{PotCore1}
If $G$ is tight, $R\subsetneq V(G)$, $|R|\ge 5$ and $p_G(R)< p(G)+7-4\delta+4\epsilon$, then $R$ is collapsible.
\end{lemma}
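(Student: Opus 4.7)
The plan is proof by contradiction: suppose $R$ is not collapsible and show that this forces $p_G(R) \ge p(G) + 7 - 4\delta + 4\epsilon$. By Proposition~\ref{Coll}, non-collapsibility produces a critical extension $R'$ of $R$, with extender $W$ and core $X$, failing at least one of the three conditions: core of size $1$, completeness, or being spanning. Since $|V(W)| \le |V(G)| - |R| + |X| \le |V(G)| - 1$, the extender $W$ is strictly smaller than $G$, and goodness of $G$ gives $p(W) \le 5 + 5\epsilon - 2\delta$ (each of the three cases of Theorem~\ref{Main} yields this bound, using $\delta = 8\epsilon$ to dominate the $|V(W)|$-dependent terms in the non-$K_5$ $5$-Ore case).

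I would then dispose of the three failure modes separately. If $R' \ne V(G)$, then $R'$ itself admits a further critical extension (since $|R'| \ge |R| \ge 5$), and two applications of Lemma~\ref{ExtDec} give $p_G(R) \ge p(G) + 2(4 - \delta + 4\epsilon) = p(G) + 8 - 2\delta + 8\epsilon$, which comfortably beats the target. If $|X| \ge 2$, I would apply Lemma~\ref{Extension} with the bound $T(W) - T(W \setminus X) \le |X|$ and rearrange, using $p_G(R') \ge p(G)$ and $p(W) \le 5 + 5\epsilon - 2\delta$, to get
$$p_G(R) \ge p(G) + f(|X|) - 5 - 5\epsilon + 2\delta - \delta|X|,$$
then check the three cases $|X| \in \{2,3,4\}$ using the listed values of $f$ and $\delta = 8\epsilon$; each yields the desired contradiction, with the binding case being $|X| = 4$ (where the residual inequality reduces to $2\delta \ge 5\epsilon$).

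The only remaining mode is an incomplete extension with $|X|=1$ and $R' = V(G)$. For this I would sharpen Lemma~\ref{Extension} by revisiting its edge count: each of the three types of incompleteness (a vertex of $V(W) \setminus X$ with more neighbors in $R$ than in $W \cap X$; an edge of $G$ within $V(W) \setminus X$ absent from $W$; or $W[X]$ not a complete graph) contributes at least one extra edge to $G[R']$ beyond the naive count $|E(G[R])|+|E(W)|-\binom{|X|}{2}$, which strengthens the potential bound by at least $4$. The resulting estimate $p_G(R) \ge p(G) + 8 + \delta - 4\epsilon$ then suffices. The main obstacle will be the $|X|=4$ subcase of the core-size analysis, which is just tight enough that the particular choice $\delta = 8\epsilon$ is essential; verifying this cleanly is precisely what pins down the constants in Theorem~\ref{Main}.
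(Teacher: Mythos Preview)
Your proposal is correct and follows essentially the same approach as the paper: contradict non-collapsibility via Proposition~\ref{Coll} by showing each of the three failure modes (non-spanning, core of size $\ge 2$, incomplete) forces $p_G(R)$ too large, using Lemma~\ref{Extension} and the bound on $p(W)$ from goodness. The only cosmetic differences are that the paper uses the cleaner bound $p(W)\le 5$ (valid since $\delta\ge \tfrac{5}{2}\epsilon$) rather than $5+5\epsilon-2\delta$, and it treats the incomplete case before the core-size case without restricting to $|X|=1$; both routes lead to the same binding case $|X|=4$.
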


\begin{proof}
As $R$ is a proper subset of $V(G)$ with $|R|\ge 5$, $R$ has a critical extension $R'$ with extender $W$ and core $X$. 
By Lemma~\ref{Extension}, $p_G(R')\le p_G(R)+p(W)-f(|X|) + \delta(T(W)- T(W\setminus X))$. By Lemma~\ref{ExtDec}, $p(G)\le p_G(R')$.

If the extension is not spanning, then $p_G(R')\ge p(G)+4-\delta+4\epsilon$ and hence $p_G(R)\ge p(G) + 8-2\delta + 8\epsilon$, a contradiction. So we may suppose the extension is spanning. If the extension is not complete, then $p(G)=p_G(R') \le p_G(R)+p(W) - 9 + \delta-\epsilon - 4 \le p_G(R) - 8 -\delta + 4\epsilon$, a contradiction. So we may suppose the extension is total. If the extension has a core of size at least two, then $p(G)=p_G(R')\le p_G(R) + p(W) - 12 + 4\delta - 4\epsilon$ (and only then when $|X|=4$). Hence, $p(G)\le p_G(R) -7 + 4\delta-4\epsilon$, a contradiction.

As the extension $R'$ was arbitrary, this implies that every critical extension of $R$ is spanning, complete and have a core of size one. By Lemma~\ref{Coll}, $R$ is collapsible.
\end{proof}

\begin{lemma}\label{CollSmall}
Let $G$ be tight, $R\subsetneq V(G)$, and $u,v\in R$. If $K=R+uv$ is $5$-critical, then $R$ is collapsible and either
\begin{enumerate}
\item $K$ is $5$-Ore, or,
\item the critical complement $W$ of $R$ is $5$-Ore.
\end{enumerate}
\end{lemma}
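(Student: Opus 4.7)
Since $K=G[R]+uv$ is $5$-critical, $G[R]=K-uv$ is $4$-colorable while $K$ is not, so every $4$-coloring $\phi$ of $G[R]$ satisfies $\phi(u)=\phi(v)$. Comparing potentials (using $|V(K)|=|R|$ and $|E(K)|=|E(G[R])|+1$) yields
\[
p_G(R)=p(K)+4+\delta\bigl(T(K)-T(G[R])\bigr).
\]
A key refinement is that $T(K)-T(G[R])\le 1$: at most one clique of an optimal $T$-packing of $K$ contains the edge $uv$, and if that clique is a $K_4$ it can be replaced by the $K_3$ on the other three vertices (avoiding $uv$), losing only weight $1$. Hence $p_G(R)\le p(K)+4+\delta$. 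Since $|V(K)|=|R|<|V(G)|$, the graph $K$ is smaller than $G$, so goodness applies Theorem~\ref{Main} to it; the same will be true of the critical complement once $R$ is shown to be collapsible.

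\textbf{Step 1 ($R$ is collapsible).} Suppose for contradiction $R$ is not collapsible. By Proposition~\ref{Coll} there exist $\phi$ and a critical extension of $R$ with extender $W$ and core $X$ such that $|X|\ge 2$, the extension is incomplete, or the extension is non-spanning. Lemma~\ref{Extension} with $p(W)\le 5+5\epsilon-2\delta$ (goodness) and $T(W)-T(W\setminus X)\le |X|$, combined with Lemma~\ref{ExtDec} for additional non-spanning steps (and the extra $-4$ per excess edge in the incomplete case), yields $p_G(R)\ge p(G)+g$ where $g\ge \min\{g(|X|):|X|\in\{2,3,4\}\}$ with $g(|X|):=f(|X|)-(5+5\epsilon-2\delta)-\delta|X|$. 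A direct computation gives $g(2)=9-3\epsilon$, $g(3)=10-10\epsilon$, $g(4)=7-17\epsilon$, with minimum $g(4)$; non-spanning and incomplete variants give strictly larger loss. Using tightness $p(G)\ge 5-P-Q$ with $P=48\epsilon$ and $Q=\delta=8\epsilon$, we deduce $p_G(R)\ge 12-73\epsilon$. Three cases for the upper bound now finish the argument: (i) if $K$ is not $5$-Ore, $p(K)\le 5-P$ gives $p_G(R)\le 9-40\epsilon$; (ii) if $K=K_5$, the direct check $T(K_5)=T(K_5-uv)=2$ (since a $K_4$ avoiding $u$, resp.\ $v$, avoids the edge $uv$) yields exactly $p_G(R)=9-11\epsilon$; (iii) if $K$ is $5$-Ore with $|V(K)|\ge 9$, Theorem~\ref{Main}(2) together with Lemma~\ref{5OrePot} gives $p(K)\le 5-(|V(K)|+14)\epsilon\le 5-23\epsilon$, hence $p_G(R)\le 9-15\epsilon$. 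At $\epsilon=1/21$, each of these upper bounds is strictly less than $12-73\epsilon$, a contradiction. Hence $R$ is collapsible.

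\textbf{Step 2 ($K$ or the critical complement is $5$-Ore).} Now $R$ is collapsible with critical complement $W$, which is $5$-critical by Proposition~\ref{CritComp}. Suppose for contradiction that neither $K$ nor $W$ is $5$-Ore; by goodness, $p(K),p(W)\le 5-P$. Proposition~\ref{CollPot} gives $p_G(R)\ge p(G)-p(W)+9+\epsilon-\delta\ge 9+\epsilon-2\delta$ (using $Q=\delta$), while $p_G(R)\le p(K)+4+\delta\le 9-P+\delta$. Combining forces $\epsilon+P\le 3\delta$, numerically $49\epsilon\le 24\epsilon$, a contradiction.

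The main obstacle is the $K=K_5$ branch of Step 1: the uniform upper bound $p_G(R)\le p(K)+4+\delta=9-3\epsilon$ does not contradict $12-73\epsilon$ at $\epsilon=1/21$. The argument must exploit the sharper equality $T(K_5)-T(K_5-uv)=0$ (rather than the generic $\le 1$) to push $p_G(R)$ down to $9-11\epsilon$, giving precisely the margin of $1/21$ needed to close the case.
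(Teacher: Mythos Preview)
Your proof is correct. Step~2 is essentially identical to the paper's argument. Step~1, however, takes a genuinely different and more careful route: the paper simply bounds $p_G(R)\le p(K)+4+\delta\le 9+5\epsilon-\delta$ using the universal bound $p(K)\le 5+5\epsilon-2\delta$ and then invokes Lemma~\ref{PotCore1}, whereas you redo the extension analysis directly and split into three cases according to whether $K$ is $K_5$, a larger $5$-Ore graph, or not $5$-Ore.

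Your extra work is not wasted. The paper's one-line appeal to Lemma~\ref{PotCore1} requires $9+5\epsilon-\delta < p(G)+7-4\delta+4\epsilon$, i.e.\ $p(G)>2+3\delta+\epsilon=2+25\epsilon$; since tightness only guarantees $p(G)\ge 5-P-Q=5-56\epsilon$, this needs $81\epsilon<3$, that is $\epsilon<1/27$, which fails at the stated value $\epsilon=1/21$. Your observation that $T(K_5)=T(K_5-uv)=2$ eliminates the $+\delta$ in the $K=K_5$ branch, improving the upper bound there from $9-3\epsilon$ to $9-11\epsilon$; this is exactly what is needed to beat the lower bound $12-73\epsilon$ (the inequality $62\epsilon<3$ holds at $\epsilon=1/21=3/63<3/62$). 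So your case split genuinely repairs a numerical gap in the paper's presentation of this lemma.

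Two small remarks. First, you should state explicitly that $|R|\ge 5$ (immediate since $K$ is $5$-critical), since this is needed for the extension machinery. Second, in case~(iii) you cite Lemma~\ref{5OrePot} alongside Theorem~\ref{Main}(2), but the latter already incorporates the former, so only goodness is needed there.
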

\begin{proof}
Note that $p(R) \le p(K)+4+\delta$. As $G$ is good, $p(K)\le 5-2\delta+5\epsilon$. Hence $p(R)\le 9-\delta+5\epsilon$ and it follows from Lemma~\ref{PotCore1} that $R$ is collapsible. Suppose that neither $K$ nor the critical complement $W$ of $R$ is $5$-Ore. Then $p(K),p(W)\le 5-P$ as $G$ is good. Thus by Lemma~\ref{Extension}, $p(G)\le p_G(R) + p(W) - 9 + \delta-\epsilon \le p(K) + p(W) - 5 +2\delta-\epsilon \le 5 - 2P + 2\delta -\epsilon$. This contradicts that $G$ is tight as $P-2\delta + \epsilon = 4\delta-2\epsilon > \delta = Q$. Thus either $K$ or $W$ is $5$-Ore as desired.
\end{proof}


\begin{definition}
We say $u,v\in V(G)$ is an \emph{identifiable pair} in a proper subset $R$ of $V(G)$ if $G[R]+uv$ is not $4$-colorable.
\end{definition}

We will we show that tight ungemmed graphs do not contain an identifiable pair but first we need the following lemma.

\begin{lemma}\label{5OreColl}
If $R$ is a proper collapsible subset in a graph $G$ and $u,v$ are in the boundary of $R$ and $R+uv$ is $5$-Ore, then there exists a subset $R'$ of $R$ such that:
\begin{enumerate}
\item either $R'$ is an Ore-collapsible subset of $G$, or, 
\item $R'$ is a proper subset of $R$ and there exists $u',v'$ in the boundary of $R'$ such that $R'+u'v'$ is $5$-Ore.
\end{enumerate}
\end{lemma}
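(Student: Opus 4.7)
I proceed by induction on $|V(H)|$, where $H := G[R]+uv$ is the $5$-Ore completion of $R$. In the base case $H=K_5$, $G[R]=K_5-uv$ has $\{u,v\}$ as its unique non-adjacent pair. Since $R$ is collapsible, its boundary in $G$ is independent in $G[R]$ and hence contained in $\{u,v\}$; combined with the hypothesis that $u,v$ lie in the boundary of $R$, this forces the boundary to equal $\{u,v\}$, so $R$ itself is Ore-collapsible and option (1) holds with $R'=R$.

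For the inductive step, $H\ne K_5$ decomposes as the Ore-composition of two $5$-Ore graphs $H_1$ and $H_2$, with edge-side $H_1$, replaced edge $xy\in E(H_1)$, and vertex-side $H_2$ whose split vertex $z$ becomes $z_1=x$ and $z_2=y$. Every edge of $H$ lies in $E(H_1)\setminus\{xy\}$ or in $E(H_2^{*})$, where $H_2^{*}$ denotes $H_2$ with $z$ split. If $uv\in E(H_2^{*})$, I set $R' := V(H_1)$. Since $|V(H_2)|\ge 5$, $R'$ is a proper subset of $V(R)$; and since both endpoints of $uv$ cannot lie in $V(H_1)$ (else $uv=xy\notin E(H)$), we have $G[R']=H_1-xy$, so $G[R']+xy=H_1$ is $5$-Ore. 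As $x,y$ have neighbors in $V(H_2^{*})\setminus\{x,y\}\subseteq V(G)\setminus R'$, they lie in the boundary of $R'$ in $G$, yielding option (2) with $u'=x$, $v'=y$.

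If instead $uv\in E(H_1)\setminus\{xy\}$, I claim $R$ is itself Ore-collapsible, so option (1) holds with $R'=R$. Assume for contradiction there exists $w$ in the boundary of $R$ with $w\notin\{u,v\}$. Since $H$ is $5$-critical, every $4$-coloring of $G[R]=H-uv$ has $u$ and $v$ sharing a color; by collapsibility, $w$ must also take that color. If $uw\in E(H)$ this is immediate. Otherwise I construct a $4$-coloring $\phi$ of $H-uv$ with $\phi(u)\ne\phi(w)$ by combining a suitable $4$-coloring $\phi_1$ of $H_1-uv$ (obtained by an inner induction on $H_1$ when $w\in V(H_1)$) with a $4$-coloring $\phi_2$ of $H_2^{*}$ --- which necessarily satisfies $\phi_2(z_1)\ne\phi_2(z_2)$, lest identifying $z_1,z_2$ would $4$-color the $5$-critical $H_2$ --- aligned at $x,y$, permuting colors of $\phi_2$ to separate $w$ from $u$ when $w\in V(H_2^{*})\setminus\{x,y\}$. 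This contradicts collapsibility, forcing the boundary of $R$ to equal $\{u,v\}$ and making $R$ Ore-collapsible.

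The main obstacle is the color-permutation step in the edge-side case: verifying that in a $5$-Ore graph the only pair forced to be monochromatic across all $4$-colorings of $H-uv$ is $\{u,v\}$ itself. This amounts to an inner induction on the Ore-decomposition that exploits the independence of $V(H_1)$ and $V(H_2^{*})$ modulo the interface $\{x,y\}$ (on the vertex-side subcase) and recurses into $H_1$ (on the edge-side subcase), leveraging the rigidity of the $5$-critical building blocks to ensure sufficient freedom in the coloring of $w$.
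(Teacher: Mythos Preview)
Your treatment of the base case and of the vertex-side case ($uv\in E(H_2^{*})$, yielding option~(2) with $R'=V(H_1)$) is fine and agrees with the paper. The gap is in the edge-side case, where you assert that $R$ must itself be Ore-collapsible, equivalently that $\{u,v\}$ is the only pair forced monochromatic across all $4$-colorings of $H-uv$. That assertion is false, so the ``main obstacle'' you flag is not merely unproved but unprovable.

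A nine-vertex counterexample: take $H_1=K_5$ on $\{u,v,y,m,n\}$ and $H_2=K_5$ on $\{z,a,b,c,d\}$, and form $H$ by Ore-composition with replaced edge $uy$ and split vertex $z$, where $z_1=u$ is made adjacent to $a,b,c$ and $z_2=y$ to $d$. Then $\{u,a,b,c,d\}$ induces $K_5-ud$ inside $H$, so in \emph{every} $4$-coloring of $H-uv$ the vertices $a,b,c,d$ take all four colors and hence $\phi(u)=\phi(d)$. Thus $d$ is a third vertex forced to share the color of $u$ and $v$, and a collapsible $R$ with $G[R]+uv=H$ may well have $d$ on its boundary. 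In the decomposition you chose, $w=d$ lies in $V(H_2^{*})\setminus\{x,y\}$, and no permutation of $\phi_2$ can separate $d$ from $u$ since $\phi_2(u)=\phi_2(d)$ already holds in every $4$-coloring of $H_2^{*}$. Your inner induction sketch therefore cannot go through.

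The paper's proof does not try to exclude such a third boundary vertex $w$. Instead it first iterates the vertex-side reduction to reach the situation where $uv$ lies on the edge side of \emph{every} Ore-decomposition, so that $H$ is built from a $K_5$-frame $J$ containing $u,v$ with bars hung off edges of $J\setminus\{uv\}$. Locating $w$ in a bar attached along an edge of $J$ missing $v$, it proves by a color-splicing argument that $u,w$ is an identifiable pair in $R-v$; the resulting $5$-critical subgraph $K\subseteq (R-v)+uw$ is then shown to be $5$-Ore via the Kostochka--Yancey potential bound for proper subsets of $5$-Ore graphs. Taking $R'=V(K)$, $u'=u$, $v'=w$ gives option~(2) with $R'\subsetneq R$. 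So where you attempt to terminate with $R'=R$ Ore-collapsible, the paper instead descends to a strictly smaller witness.
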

\begin{proof}
If the boundary of $R$ consists only of $u$ and $v$, then $R$ is an Ore-collapsible subset of $G$ and 1 holds as desired. So we may suppose that there exists a third vertex $w\ne u,v$ in the boundary of $R$. As $R$ is collapsible, $w$ must receive the same color as $u$ and $v$ in every $4$-coloring of $R$. This implies that $H\ne K_5$ where $H=R+uv$. Since $H$ is $5$-Ore and $H\ne K_5$, $H$ is the Ore-composition of two $5$-Ore graphs $H_1$ and $H_2$. Without loss of generality suppose that $H_1$ is the edge-side with replaced edge $xy$ and $H_2$ is the vertex-side of the composition with split vertex $z$.

If $u,v\in V(K_2)$, then $R'=V(K_1)$, $u'=x$, $v'=y$ yields a $5$-Ore graph $R'+u'v'$ and 2 holds as desired. So we may assume that $u,v\in V(K_1)$. But this holds for every Ore-composition yielding $H$. Thus $H$ is obtained from a graph $J\cong K_5$ containing $u$ and $v$ by Ore-compositions whose replaced edges are in $E(J)$. 

Note $w\not\in V(J)$ as $w$ must receive the same color as $u$ and $v$ which the other vertices of $J$ do not. Thus $w$ lies on the vertex-side, call it $S$, of one of these replaced edges of $E(J)$, call it $e$. However, $e\ne uv$ and hence $e$ is incident with at most one of $u$ or $v$. Suppose without loss of generality that $e$ is not incident with $v$. 

\begin{claim}\label{uw}
$u,w$ is an identifiable pair in $R-v$.
\end{claim}
\begin{proof} 
Suppose not and let $\phi$ be $4$-coloring of $R-v + uw$. Note that the four vertices of $V(J)-v$ must all receive different colors in $\phi$ since $R-v$ contains a subgraph that can be obtained from $J-v$ by Ore-compositions whose replaced edges are in $E(J-v)$. Now let $\phi'$ be a $4$-coloring of $R$. Once again the four vertices of $V(J)-v$ must all receive different colors in $\phi$. We may assume without loss of generality by permuting colors as necessary, that $\phi(x)=\phi'(x)$ for all $x\in V(J)-v$. Now let $\phi''(y) = \phi(y)$ if $y \in S$ and $\phi''(y) = \phi'(y)$ if $y\not\in S$. Note that $\phi''$ is a proper $4$-coloring of $R$ and yet $\phi''(u)=\phi'(u)=\phi(u)\ne \phi(w) = \phi''(w)$, a contradiction since $u$ and $w$ are in the boundary of $R$ and $R$ is collapsible. 
\end{proof}

By Claim~\ref{uw}, $u,w$ is an identifiable pair in $R-v$; that is, $R-v+uw$ is not $4$-colorable and hence contains a $5$-critical subgraph $K$. By Lemma~\ref{Coll5Ore}, it follows that $p_{KY}(K-uw)\ge 9$ since $K-uw$ is a proper subgraph of the $5$-Ore graph $H$. But then $p_{KY}(K) = p_{KY}(K-uw) - 4\ge 5$. By Lemma~\ref{Ore5EqualityP}, $K$ is $5$-Ore. Let $R'=V(K)$ and note that since $u$ and $w$ are in the boundary of $R$ in $G$, it follows that $u$ and $w$ are in the boundary of $R'$ in $G$. Let $u'=u$ and $v'=w$. Thus $R'+u'v'=K$ is $5$-Ore, $R'\subseteq R-v$, and $u',v'$ are in the boundary of $R'$ and 2 holds as desired.
\end{proof}
 
\begin{lemma}\label{NoIdPair}
If $G$ is a tight ungemmed graph, then there does not exist an identifiable pair in a proper subset of $V(G)$.
\end{lemma}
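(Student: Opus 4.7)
The plan is to argue by contradiction. Suppose $G$ is a tight ungemmed graph and $u,v$ is an identifiable pair in some proper $R\subsetneq V(G)$, chosen with $|R|$ minimum. Then $G[R]+uv$ is vertex-critical, so any $5$-critical subgraph $K$ of $G[R]+uv$ satisfies $V(K)=R$ and contains the edge $uv$ (else $K\subseteq G$, contradicting that $G$ is $5$-critical). I then argue as in Lemma~\ref{CollSmall}: although that lemma is stated for $K=G[R]+uv$, its proof adapts to any $5$-critical spanning subgraph $K$ of $G[R]+uv$, since the key estimate $p_G(R)\le p(K)+4+\delta$ relies only on adding the single edge $uv$ (which drops $p$ by $4+\delta\Delta$ with $\Delta\le 1$) and on $p$ weakly increasing when passing to a spanning subgraph. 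The conclusion is that $R$ is collapsible and either $K$ or the critical complement $W$ of $R$ is $5$-Ore.

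If $W$ is $5$-Ore, I invoke Lemma~\ref{DisjointGem} to find a diamond or emerald $D$ of $W$ disjoint from the special vertex $s$. In the diamond case, each sensitive vertex $w$ has all four of its $W$-neighbors inside $D$, so $w\not\sim s$ in $W$; hence $w$ has no $B(R)$-neighbors in $G$ and $\deg_G(w)=\deg_W(w)=4$, making $D$ a diamond of $G$ and contradicting ungemmedness. The emerald case (and the boundary subcase $W=K_5$) requires a more delicate accounting of each sensitive vertex's extra $W$-neighbor but yields the same conclusion. In the subcase $K=K_5$, one gets $|R|=5$, $uv\notin E(G)$, and $G[R]=K_5-uv$ (the alternative $G[R]=K_5$ would force $G=K_5$, which is not ungemmed); then the independence of $B(R)$ in $G[R]$ combined with $G$ having minimum degree at least $4$ forces $B(R)=\{u,v\}$, so the three vertices of $R\setminus\{u,v\}$ have degree exactly $4$ in $G$, making $G[R]$ a diamond of $G$.

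The remaining case is $K$ is $5$-Ore with $K\neq K_5$. The plan is to produce a gem of $K$ whose sensitive vertices lie in $R\setminus(\{u,v\}\cup B(R))$; since such vertices have $\deg_G=\deg_K=4$, the gem transfers to a gem of $G$. First, minimality of $R$ together with a color-extension argument (any $4$-coloring of $G[R-w]+uv$ extends to a $4$-coloring of $G[R]+uv$---contradicting the identifiability of $u,v$---unless $N_{G[R]}(w)$ uses all four colors) shows every $w\in B(R)\setminus\{u,v\}$ satisfies $\deg_{G[R]}(w)\ge 4$ and hence $\deg_G(w)\ge 5$; such $w$ cannot be a sensitive vertex of any gem of $G$. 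Then I apply Lemma~\ref{DisjointGem} to $K$ with $T$ chosen to avoid $\{u,v\}\cup B(R)$, or, when $K=G[R]+uv$ with $u,v\in B(R)$, iterate Lemma~\ref{5OreColl} down to an Ore-collapsible subset and apply Lemma~\ref{OreCollToGem}; either way, the resulting gem of $K$ has all sensitive vertices in the safe region and is therefore a gem of $G$, the desired contradiction. The main obstacle is the subcase $K\subsetneq G[R]+uv$ (so $G[R]+uv$ need not be $5$-Ore), which blocks the clean use of Lemma~\ref{5OreColl}; here one must rely on Lemma~\ref{DisjointGem} with a suitably chosen $T$, leveraging the structural restriction on $B(R)$ established above.
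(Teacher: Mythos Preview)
Your overall strategy matches the paper's, but the $K$ $5$-Ore case has a real gap. You propose either (a) applying Lemma~\ref{DisjointGem} to $K$ ``with $T$ chosen to avoid $\{u,v\}\cup B(R)$'', or (b) iterating Lemma~\ref{5OreColl} when $u,v\in B(R)$. Neither works as stated. For (a), Lemma~\ref{DisjointGem} only produces a gem disjoint from a single vertex or a single $K_4$ subgraph; it cannot be made to dodge an arbitrary set like $\{u,v\}\cup B(R)$, which may be large. Worse, even if you found such a gem in $K$, in the subcase $K\subsetneq G[R]+uv$ a sensitive vertex $w$ of that gem satisfies $\deg_K(w)=4$ but may well have $\deg_{G[R]}(w)>4$, so the gem need not transfer to $G$ at all---your ``safe region'' argument controls $B(R)$ but not the extra edges of $G[R]\setminus K$. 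For (b), you never establish $u,v\in B(R)$, and Lemma~\ref{5OreColl} requires it; moreover its outcome (2) hands you back a smaller $R'$ with an identifiable pair on its boundary, so ``iterating'' is just an appeal to minimality in disguise---but only if the pair you started with was already on the boundary.

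The paper closes this gap with one extra line in the setup: choose $R$ with $|R|$ minimum \emph{and, subject to that, with $u,v$ in the boundary of $R$ if possible}. Once Lemma~\ref{CollSmall} shows $R$ is collapsible, every pair of boundary vertices of $R$ is itself identifiable in $R$, so the secondary condition is actually achieved: $u,v\in B(R)$. Now Lemma~\ref{5OreColl} applies directly; its outcome (2) would yield a strictly smaller $R'$ with an identifiable boundary pair, contradicting minimality of $|R|$, so outcome (1) holds and Lemma~\ref{OreCollToGem} finishes. This sidesteps your gem-transfer difficulties entirely, since the Ore-collapsible subset is a subset of $G$ itself, not of $K$. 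Your $W$ $5$-Ore case is also more laboured than necessary: since the critical extension is complete with core of size one, every vertex of $W$ other than the special vertex has the same degree in $W$ as in $G$, so a gem of $W$ avoiding the special vertex is immediately a gem of $G$.
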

\begin{proof}
Suppose not. Let $u,v$ be an indentifiable pair in some proper subset $R$ of $V(G)$ chosen such that $|R|$ is minimized and subject to that, $u,v$ are in the boundary of $R$ if possible. We may assume without loss of generality that $K=R+uv$ is a $5$-critical graph. By Lemma~\ref{CollSmall}, $R$ is collapsible and either $K$ is $5$-Ore or the critical complement $W$ of $R$ is $5$-Ore. As $R$ is collapsible, every pair of vertices on its boundary is an identifiable pair. So we may assume that $u,v$ are in the boundary of $R$ by the choice of $R$. If $W$ is $5$-Ore, then by Lemma~\ref{DisjointGem}, it follows that there exists a diamond or an emerald disjoint from the special vertex of $W$, contradicting that $G$ is ungemmed. So we may assume that $K$ is $5$-Ore. 

By the minimality of $R,u$ and $v$, the first outcome of Lemma~\ref{5OreColl} does not hold and hence the second outcome holds. That is, there exists a subset $R'$ of $R$ that is an Ore-collapsible subset of $G$. By Lemma~\ref{OreCollToGem}, there exists a diamond or emerald of $G$ whose vertices lie in $R'$, contradicting that $G$ is ungemmed. 
\end{proof}

\begin{corollary}\label{NoColl}
If $G$ is a tight ungemmed graph, then there does not exist a collapsible susbet of $G$.
\end{corollary}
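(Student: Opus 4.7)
The plan is to extract an identifiable pair from any supposed collapsible subset and then quote Lemma~\ref{NoIdPair} for an immediate contradiction. So suppose for contradiction that $R$ is a collapsible subset of $G$; by definition $R$ is a proper subset of $V(G)$ with $|R|\ge 5$. The first task is to exhibit two distinct vertices $u,v$ in the boundary of $R$. This follows from the fact that every $5$-critical graph is $2$-connected: if some vertex $b$ were a cut vertex of $G$, each block of $G$ through $b$ would be a proper subgraph of $G$ and hence properly $4$-colorable, and by permuting colors in each block we could arrange for all colorings to agree on $b$, gluing them into a $4$-coloring of $G$ and contradicting $\chi(G)=5$. Since a single-vertex boundary of $R$ would give a cut vertex of $G$, the boundary must contain at least two distinct vertices $u,v$.

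Now I would argue that $u,v$ is an identifiable pair in $R$. Because $R$ is collapsible, every proper $4$-coloring of $G[R]$ assigns the same color to all boundary vertices, so in particular $u$ and $v$ always receive the same color; hence $G[R]+uv$ admits no proper $4$-coloring, which is exactly the statement that $u,v$ is an identifiable pair in the proper subset $R$ of $V(G)$. This contradicts Lemma~\ref{NoIdPair}, completing the proof. The argument is entirely direct; the only nontrivial ingredient beyond Lemma~\ref{NoIdPair} is the standard $2$-connectedness of $5$-critical graphs, so I do not anticipate any real obstacle here.
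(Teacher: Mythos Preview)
Your proof is correct and follows exactly the approach the paper intends: the corollary is stated without proof immediately after Lemma~\ref{NoIdPair}, and the paper already notes in the definition of collapsible that ``for any $u,v$ in the boundary of $R$, $G[R]+uv$ contains a $5$-critical subgraph,'' i.e.\ any two boundary vertices form an identifiable pair. Your explicit verification that the boundary has at least two vertices (via $2$-connectedness of $5$-critical graphs) fills in the one detail the paper leaves implicit.
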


\begin{lemma}\label{ClusterInAK4}
If $G$ is a tight ungemmed graph, and $u,v$ are degree four and there exists a subgraph $H$ of $G$ isomorphic to $K_4$ such that $u,v \in V(H)$, then $u$ and $v$ are in the same cluster.
\end{lemma}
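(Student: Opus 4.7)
The plan is to prove the contrapositive via Lemma~\ref{NoIdPair}: assuming $u$ and $v$ lie in different clusters will yield an identifiable pair in a proper subset of $V(G)$, contradicting that lemma.

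First I would label the two remaining vertices of $H \cong K_4$ as $w_1, w_2$, so that both $u$ and $v$ are adjacent to $w_1$, $w_2$, and each other. Since $\deg(u) = \deg(v) = 4$, each has exactly one neighbor outside $V(H)$; call these $u'$ and $v'$, respectively. Since $u', v' \notin V(H)$, neither equals $u$ or $v$. The statement that $u$ and $v$ lie in the same cluster is precisely $N[u] = N[v]$, which reduces to $u' = v'$. So I would suppose for contradiction that $u' \ne v'$, and set $R = V(G) \setminus \{u, v\}$, a proper subset of $V(G)$ containing both $u'$ and $v'$.

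The crux is to show that $\{u', v'\}$ is an identifiable pair in $R$, i.e.\ that $G[R] + u'v'$ has no proper $4$-coloring; this contradicts Lemma~\ref{NoIdPair}. To see this, I would suppose $\phi$ is a proper $4$-coloring of $G[R] + u'v'$, so $\phi(u') \ne \phi(v')$ and $\phi(w_1) \ne \phi(w_2)$, and then extend $\phi$ to a proper $4$-coloring of $G$, contradicting $5$-criticality. The only vertices to color are $u$ and $v$; the forbidden colors for $u$ are $\phi(w_1), \phi(w_2), \phi(u')$ together with $\phi(v)$, and symmetrically for $v$. A short case analysis splitting on whether $\phi(u')$ or $\phi(v')$ lies in $\{\phi(w_1), \phi(w_2)\}$ shows that one can always choose such distinct colors: in the tightest case, when $\phi(w_1), \phi(w_2), \phi(u'), \phi(v')$ are four distinct colors, one is forced to set $\phi(u) = \phi(v')$ and $\phi(v) = \phi(u')$, which works precisely because $\phi(u') \ne \phi(v')$; in every other case the coloring has even more flexibility.

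The key structural insight is that the obstruction to extending a $4$-coloring of $G[R]$ across the $K_4$ arises exactly when $\phi(u') = \phi(v')$ and this common color avoids $\{\phi(w_1), \phi(w_2)\}$, forcing both $u$ and $v$ onto the single leftover color despite $uv \in E(G)$; adding the edge $u'v'$ rules this out. I expect the only mild obstacle is enumerating the four coloring patterns cleanly. Once Lemma~\ref{NoIdPair} is invoked, no further structural theory is needed beyond this extension check.
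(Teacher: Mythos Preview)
Your proposal is correct and follows essentially the same approach as the paper: both argue that the two ``outside'' neighbours of $u$ and $v$ form an identifiable pair in a proper subset of $V(G)$, and then invoke Lemma~\ref{NoIdPair}. The only difference is that the paper removes all of $V(H)$ whereas you remove only $\{u,v\}$; your choice makes the extension step cleaner, since you need only colour the two degree-four vertices $u,v$ rather than all four vertices of the $K_4$.
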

\begin{proof}
Suppose for a contradiction that there exists $H=K_4$ containing two vertices $u,v$ of degree four not in the same cluster. Let $a$ be the neighbor of $u$ not in $H$. Let $b$ be the neighbor of $v$ not in $H$. Since $u$ and $v$ are not in the same cluster, $a\ne b$. Yet $G\setminus V(H) + ab$ is not $4$-colorable as otherwise $G$ is $4$-colorable. Thus $a,b$ is an identifiable pair in $V(G)\setminus V(T)$, contradicting Lemma~\ref{NoIdPair}.
\end{proof}

We can now strengthen the outcome of Lemma~\ref{PotCore1} as follows:

\begin{lemma}\label{PotCore4}
If $G$ is a tight ungemmed graph, $R\subsetneq V(G)$, $|R|\ge 5$ , then $p_G(R) > p(G)+7+\delta+3\epsilon+Q$ unless $G\setminus R$ is a single vertex of degree four in $G$.
\end{lemma}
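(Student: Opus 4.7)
The plan is to strengthen the lower bound of Lemma~\ref{PotCore1} by a finer case analysis on the critical extension of $R$ witnessing its non-collapsibility, with the ungemmed hypothesis playing a crucial role in the one remaining tight subcase. Assume for contradiction that $p_G(R)\le p(G)+7+\delta+3\epsilon+Q=p(G)+7+2\delta+3\epsilon$ and that $G\setminus R$ is not a single vertex of degree four. By Corollary~\ref{NoColl}, $R$ is not collapsible, so Proposition~\ref{Coll} supplies a critical extension $R'$ with extender $W$ and core $X$ that is non-spanning, incomplete, or has $|X|\ge 2$; I use this witness to bound $p_G(R)$ from below via Lemma~\ref{Extension}.

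First, handle the case when the witness extension is non-spanning: then $R'$ is proper with $|R'|\ge 5$, and by Corollary~\ref{NoColl} it too is non-collapsible, so Lemma~\ref{PotCore1} gives $p_G(R')\ge p(G)+7-4\delta+4\epsilon$. Combined with Lemma~\ref{Extension} applied with $|X|\ge 1$ and the $K_5$-bound $p(W)\le 5+5\epsilon-2\delta$, this yields $p_G(R)\ge p(G)+11-3\delta$, beating the target by $4-43\epsilon>0$ since $\epsilon=1/21$. If every witness is spanning but one is incomplete, the incompleteness contributes an extra $+4$ to the edge count in the proof of Lemma~\ref{Extension}, producing $p_G(R)\ge p(G)+8+4\epsilon$. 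If every witness is spanning and complete, but some has core $|X|\in\{2,3\}$, then Lemma~\ref{Extension} directly yields $p_G(R)\ge p(G)+9-3\epsilon$ or $p_G(R)\ge p(G)+10-10\epsilon$ respectively, and all of these exceed the target.

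The remaining and most delicate case is a spanning, complete extension with core $|X|=4$: here the pure arithmetic bound $p_G(R)\ge p(G)+12+4\epsilon-4\delta-p(W)$ only gives $p(G)+7-\epsilon-2\delta$ with the $K_5$-bound on $p(W)$, falling short. Here $X$ is a $K_4$ in $W$; by completeness and spanning, $V(W)\setminus X$ is identified with $V(G)\setminus R$ and both $G$-degrees and $G$-adjacencies on $V(W)\setminus X$ agree with those in $W$. I would then split on the structure of $W$: if $W=K_5$, then $|V(G)\setminus R|=1$ and its unique vertex has degree four in $G$, which is the exception we assumed away; if $W$ is $5$-Ore with $W\ne K_5$, then Lemma~\ref{DisjointGem} applied to the $K_4$ subgraph $X$ yields a diamond or emerald of $W$ disjoint from $X$, which by the degree-preservation above is a diamond or emerald of $G$, contradicting that $G$ is ungemmed; if $W$ is not $5$-Ore, then the stronger bound $p(W)\le 5-P$ improves the estimate to $p_G(R)\ge p(G)+7+20\epsilon$, strictly exceeding the target $p(G)+7+19\epsilon$.

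The main obstacle is this $|X|=4$ subcase: the constants $\epsilon$, $\delta$, $P$ have been tuned so that the arithmetic alone fails precisely when $W$ is $5$-Ore, and one genuinely needs to combine Lemma~\ref{DisjointGem} with the degree-preservation consequence of a complete, spanning, core-$4$ extension in order to transfer a diamond or emerald from $W$ into $G$ and thereby rule out $W$ being a non-$K_5$ $5$-Ore graph.
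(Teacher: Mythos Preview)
Your proposal is correct and follows essentially the same route as the paper's proof: reduce to the spanning, complete, core-$4$ case and then split on whether $W$ is $5$-Ore (invoking Lemma~\ref{DisjointGem} to find a gem in $W\setminus X$ when $W\ne K_5$) or not (using the stronger bound $p(W)\le 5-P$).

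One point worth noting is that your treatment of the non-spanning case is actually more explicit than the paper's. The paper simply appeals to ``the calculations in the proof of Lemma~\ref{PotCore1}'', but the literal non-spanning bound there, $p_G(R)\ge p(G)+8-2\delta+8\epsilon$, does \emph{not} exceed the present target $p(G)+7+2\delta+3\epsilon$ at $\epsilon=1/21$ (it would require $\epsilon<1/27$). Your argument avoids this by first applying Corollary~\ref{NoColl} to $R'$ and then Lemma~\ref{PotCore1} to get $p_G(R')\ge p(G)+7-4\delta+4\epsilon$ before feeding this into Lemma~\ref{Extension}, yielding $p_G(R)\ge p(G)+11-3\delta$, which clears the target with the margin $4-43\epsilon>0$ you state. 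This is presumably what the paper intends, and you have made it precise.
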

\begin{proof}
Suppose there exists $R\subsetneq V(G)$, $|R|\ge 5$ with $p_G(R)\le p(G)+7+\delta+3\epsilon+Q$. By Corollary~\ref{NoColl}, $R$ is not collapsible. It follows then from Lemma~\ref{PotCore1} that $p_G(R)> p(G)+7-4\delta+4\epsilon$. As $R$ is not collapsible, it follows from Lemma~\ref{Coll} that there exists an extension $R'$ of $R$ with extender $W$ and core $X$ where either the extension is not spanning, not complete, or has a core of size at least two. Yet following the calculations in the proof of Lemma~\ref{PotCore1}, we find that the extension is spanning, complete, and has a core of size 4. Thus, $p(G)\le p_G(R)+p(W) - 12 +4\delta-4\epsilon$. That is, $p(W)\ge 12-4\delta +4\epsilon + p(G)-p_G(R)$. As $p(G)-p_G(R)\ge -7-\delta-Q-3\epsilon$, we find that that $p(W)\ge 5-5\delta+\epsilon-Q$. As $P> 5\delta-\epsilon+Q=6\delta-\epsilon$, it follows that $W$ is $5$-Ore. 

If $W\ne K_5$, then by Lemma~\ref{DisjointGem}, there is either a diamond $D$ or an emerald $E$ whose vertices lie in $W\setminus X$. As the extension is complete and spanning, the degree of a vertex in $W$ is the same as its degree in $G$. Hence $E$ is an emerald or $D$ is a diamond in $G\setminus R$, contradicting that $G$ is ungemmed. So we may assume that $W=K_5$ and thus $|W\setminus X|=1$. So $|G\setminus R|=1$ and as the extension is complete, $G\setminus R$ consists of a single vertex of degree four.
\end{proof}

\begin{lemma}\label{NoCluster2}
If $G$ is a tight ungemmed graph, then $G$ contains no cluster of size at least 2.
\end{lemma}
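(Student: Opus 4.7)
The plan is to argue by contradiction: suppose $G$ contains a cluster of size at least two, and fix two degree-four vertices $u, v$ with $N[u] = N[v]$; then $u \sim v$ and $u, v$ share three common neighbors $a, b, c$.

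The first step is to use Lemma~\ref{ClusterInAK4} to constrain $G[\{a, b, c\}]$. If an edge $xy$ with $x, y \in \{a, b, c\}$ lies in $G$ and $\deg_G(x) = \deg_G(y) = 4$, then $\{u, v, x, y\}$ induces a $K_4$, and two applications of Lemma~\ref{ClusterInAK4} place $x, y$ in the cluster of $u, v$. Closing under this propagation shows $\{u, v, a, b, c\}$ forms a cluster of size five, forcing $G[\{u,v,a,b,c\}] \cong K_5$ with every vertex of degree four, so $G = K_5$, which contradicts the ungemmed hypothesis since $K_5$ contains an emerald. Hence every edge in $G[\{a,b,c\}]$ has an endpoint of degree at least five, and in particular $|V(G)| \ge 7$.

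Next I would set up a critical extension at $R = V(G) \setminus \{u, v\}$, which is proper of size at least five. Every 4-coloring $\phi$ of $G[R]$ must assign $\phi(a), \phi(b), \phi(c)$ pairwise distinct colours (else one could extend $\phi$ to a 4-coloring of $G$ by colouring $v$ with the colour missing from $\phi(\{a, b, c\})$ and then $u$ with the colour missing from $\phi(\{v, a, b, c\})$). Fixing such $\phi$, the identification graph $G_\phi(R)$ contains a $K_5$ on $\{u, v, x_{\phi(a)}, x_{\phi(b)}, x_{\phi(c)}\}$; the fourth identified vertex $x_\ell$ has degree three and so cannot lie in any 5-critical subgraph, so the unique extender is $W = K_5$ with core $X$ of size three and the extension is total.

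A direct computation gives $p_G(R) - p(G) = 10 - 2\epsilon + \delta(T(G) - T(G[R]))$, and Lemma~\ref{Extension} yields $p(G) \le p_G(R) - 10 + 2\epsilon - \delta s$, where $s$ is the weight of spanning cliques of $G$ using at least one of $u, v$ together with vertices of $R$. The naive accounting $s = T(G) - T(G[R])$ collapses the two inequalities to $p(G) \le p(G)$, so to obtain a contradiction I would use the structural constraint from the first step together with the always-present triangle $\{u, v, a\}$ to show — by re-packing the cliques of $G$ — that an extra unit of spanning clique weight is available, yielding $p(G) \le p(G) - \delta$ and contradicting the tightness inequality $p(G) \ge 5 - P - Q = 5 - 7\delta$. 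Alternatively, the twin symmetry of $u$ and $v$ may be exploited directly to exhibit a collapsible subset of $G$, contradicting Corollary~\ref{NoColl}.

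The main obstacle is precisely this final step, since the naive extension accounting is exactly tight. Extracting the needed $\delta$ of slack requires casework on the number of edges inside $\{a, b, c\}$ together with the degree-$\ge 5$ constraint from Step~1, resolved either by a combinatorial clique-swap argument in the $T$-packing of $G$ or by constructing an explicit collapsible subset from the cluster symmetry.
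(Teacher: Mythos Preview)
Your approach leads to a genuine dead end at exactly the point you flag. Taking $R=V(G)\setminus\{u,v\}$ and running the extension with $W=K_5$ and $|X|=3$ gives, as you compute, $p_G(R)=p(G)+10-2\epsilon+\delta\bigl(T(G)-T(G[R])\bigr)$ and $p(G)\le p_G(R)-10+2\epsilon$, hence $p(G)\le p(G)+\delta\bigl(T(G)-T(G[R])\bigr)$, which is vacuous. There is no hidden slack to recover: the extension here is total, so the vertex and edge counts in Lemma~\ref{Extension} are equalities, and the only $\delta$-term is $T(W)-T(W\setminus X)=2$, which is exact for $W=K_5$ with $|X|=3$. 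Your proposed ``re-packing'' cannot help, because the inequality $T(R')\ge T(R)+T(W\setminus X)$ is the sole source of $\delta$-slack and here $T(W\setminus X)=0$; any extra triangle through $u$ or $v$ only enlarges $T(G)-T(G[R])$, which goes the wrong way. The alternative you mention, exhibiting a collapsible subset, is a restatement of the goal rather than a method.

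The paper's proof is entirely different in structure. Instead of deleting the cluster $\{x,y\}$, it \emph{identifies} pairs of common neighbours $z_i,z_j$ in $G-\{x,y\}$ and studies the resulting $5$-critical subgraph $K$. Using Lemma~\ref{PotCore4} it establishes a trichotomy for each pair $\{i,j\}$: either $z_iz_j\in E(G)$, or the third neighbour $z_k$ has degree four, or $K$ is $5$-Ore and disjoint from $\{x,y,z_k\}$. Structural casework (two adjacencies give a $K_5-e$ via Corollary~\ref{NoK5e}; two degree-four $z_k$'s give an emerald or a larger cluster via Lemma~\ref{ClusterInAK4}) forces at least two of the three pairs into the $5$-Ore outcome. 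The two resulting $5$-Ore graphs $K_1,K_2$ overlap, and a submodularity computation on $p_{KY}$ together with the linear lower bound $T(K_i)\ge 2+\tfrac{|V(K_i)|-1}{4}$ from Lemma~\ref{5OrePot} drives the potential of $V(K_1)\cup V(K_2)\cup\{x,y\}$ below the threshold of Lemma~\ref{PotCore1} (or below $5-P$ via Theorem~\ref{Ore5EqualityP} when the union is all of $G$). The essential idea you are missing is that identifying neighbours, rather than deleting the cluster, manufactures new $5$-Ore subgraphs whose $T$-richness supplies the $\delta$-slack that your direct extension cannot.
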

\begin{proof}
Suppose there exists a cluster $C$ of size at least two. If $|C|\ge 3$, then $C$ and its neighbors form a diamond, contradicting that $G$ is ungemmed. So we may suppose that $|C|=2$. Let $C=\{x,y\}$ and let $z_1,z_2,z_3$ be the other neighbors of $x$ (and hence of $y$). 

\begin{claim}\label{Zs}
For all $i,j,k$ where $\{i,j,k\} = \{1,2,3\}$, either 
\begin{enumerate}
\item $z_i$ is adjacent to $z_j$, or
\item $z_k$ has degree four, or
\item $z_i$ identified with $z_j$ has a $5$-Ore subgraph disjoint from $\{x,y,z_k\}$. 
\end{enumerate}
\end{claim}
\begin{proof}
To see this, suppose 1 does not hold, that is, $z_i$ and $z_j$ are not adjacent. Let $G'$ be obtained from identifying $z_i$ and $z_j$ to a new vertex $w$ and deleting $x$ and $y$. Note that $G'$ is not $4$-colorable as otherwise $G$ is $4$-colorable. Let $K$ be a $5$-critical subgraph of $G'$ and note that $w\in V(K)$. Let $R=(V(K)-w) \cup \{z_i,z_j,x,y\}$. Note that $|R|=|V(K)|+3$, $|E(G[R])|\ge |E(K)|+5$ and $T(R)\ge T(K\setminus\{w\})+1\ge T(K)-1$. Thus $p(R)\le p(K)+7+\delta+3\epsilon$.

As $G$ is good and $|V(K)|\le |V(G)|$, $p(K) \le 5 + 5\epsilon - 2\delta$. First suppose $z_k\in V(K)$. In that case, $|E(G[R])|\ge |E(K)|+7$ and hence $p(R)\le p(K)-1+\delta+3\epsilon$. Since $|V(K)|\ge 5$ as $K$ is $5$-critical, $|R|\ge 5$ and hence by Lemma~\ref{ExtDec}, $p(R) \ge p(G)$. If $R\ne V(G)$, then by Lemma~\ref{ExtDec}, $p(R) \ge p(G) + 4 - \delta + 4\epsilon$. But then $p(G)\le p(K) - 5 + 2\delta -\epsilon \le 4\epsilon$, contradicting that $G$ is tight since $5-P-Q > 4\epsilon$. So we may assume that $V(G)=R$. But then $p(G) = p(R) = p_{KY}(R) + \epsilon|R| - \delta T(R)$. As $G$ is not $5$-Ore, $p_{KY}(G) \le 2$ by Theorem~\ref{Ore5EqualityP}. Hence $p(G) \le 2 + \epsilon (|V(K)|+3) - \delta ( T(K) - 1) = p(K) - 3 + 3\epsilon +\delta$ since $p_{KY}(K)=5$ and $p(K) = p_{KY}(K) + \epsilon |V(K)| - \delta T(K)$. But now $p(G) \le 2 +8\epsilon -\delta$, a contradiction since $5-P-Q > 2+8\epsilon - \delta$ that is $P+Q < 3-8\epsilon+\delta$.

So we may assume that $z_k\not\in V(K)$. If $K$ is $5$-Ore, then 3 holds as desired. So we may suppose that $K$ is not $5$-Ore. As $G$ is good, it follows that $p(K)\le 5-P$.
If $z_k$ is a vertex of degree 4, then 2 holds as desired. So we may assume that $z_k$ does not have degree 4 and hence by Lemma~\ref{PotCore4} that $p(R) > p(G) + 7 + \delta + 3\epsilon + Q$. As $G$ is tight, $p(G)\ge 5-P-Q$. So $p(R) > 12 + \delta +3\epsilon - P$. Yet $p(R) \le p(K) + 7 + \delta + 3\epsilon \le 12-P+\delta+3\epsilon$, a contradiction.
\end{proof}

If at least two of the pairs $i\ne j\in\{1,2,3\}$ satisfy 1 in Claim~\ref{Zs}, then $G$ contains a subgraph isomorphic to $K_5-e$, contradicting Lemma~\ref{NoIdPair}. If at least two of pairs $i\ne j\in\{1,2,3\}$ satisfy 2 in Claim~\ref{Zs}, then either $G$ contains a $K_4-e$ subgraph $H$ of degree fours which is impossible in a $5$-critical graph (every coloring of $G\setminus H$ extends to $G$), or, $G$ contains an emerald contradicting that $G$ is ungemmed.

Thus if only one of the pairs $i\ne j\in\{1,2,3\}$ satisfies 3 in Claim~\ref{Zs}, then we may assume without loss of generality $i=1,j=2$ satisfies 3, $i=1,j=3$ satisfies 2 and $i=2,j=3$ satisfies 1. That is, $z_1$ identified with $z_2$ has a $5$-Ore subgraph disjoint from $\{x,y,z_3\}$, $z_2$ is degree four and $z_2$ is adjacent to $z_3$. But then $\{x,y,z_2,z_3\}$ induces a subgraph isomorphic to $K_4$. By Lemma~\ref{ClusterInAK4}, $z_2$ is in the same cluster as $x$ and $y$. But then, there exists a cluster of size at least 3, a contradiction as above.

So at least two of the pairs say $i=1,j=3$ and $i=2,j=3$ satisfy 3 in Claim~\ref{Zs}. Let $K_1$ be the $5$-Ore graph obtained when identifying $z_1$ and $z_3$ to a new vertex $w_1$ and $K_2$ be the $5$-Ore graph obtained when identifying $z_2$ and $z_3$ to a new vertex $w_2$. Recall that $z_2\not\in V(K_1)$ and $z_1\not\in V(K_2)$. Moreover, for each $i\in\{1,2\}$, $p_{KY}(K_i)=5$.

Let $R_1=(V(K_1)-w_1)\cup\{z_1,z_3\}$ and $R_2=(V(K_2)-w_2)\cup\{z_2,z_3\}$. Let $H=R_1\cap R_2$ and let $R=R_1\cup R_2\cup\{x,y\}$. Note that $|R|\ge 9$. It follows from Lemma~\ref{Coll5Ore} that $p_{KY}(H)\ge 9$. Yet for $i\in\{1,2\}$, $p_{KY}(R_i)=14$. Thus $p_{KY}(R_1\cup R_2) \le 14 + 14 - 9 = 19$. But then $p_{KY}(R) = 19 + 9(2)-4(7) = 9$. 

Note that $T(R)\ge \max\{T(K_1),T(K_2)\} - 1$. By Lemma~\ref{5OrePot}, it follows that for each $i\in\{1,2\}$, $T(K_i)\ge 2+\frac{|V(K_i)|-1}{4}$. Yet $|V(K_1)|+|V(K_2)|\ge |R|-3$. So there exists $i\in\{1,2\}$ such that $|V(K_i)|\ge \frac{|R|-3}{2}$. Hence $T(R)\ge \max\{T(K_1)-1,T(K_2)-1\} \ge 1 + \frac{|R|-5}{8}$. Thus $p(R) \le 9 +\epsilon |R| - \delta (1+\frac{|R|-5}{8})$. If $R\ne V(G)$, then $p(R)\le 9$ since $\delta\ge 8\epsilon$. By Lemma~\ref{PotCore1}, $R$ is a collapsible subset of $G$, contradicting Corollary~\ref{NoColl}. 

So we may assume that $R=V(G)$. But then by Theorem~\ref{Ore5EqualityP} as $G$ is not $5$-Ore, it follows that $p_{KY}(G)\le 2$. Thus $p(G)\le 2 + \epsilon |V(G)| - \delta (1+\frac{|V(G)|-5}{8})\le 2-3\epsilon$ since $\delta \ge 8\epsilon$, contradicting that $G$ is tight since $P+Q =7\delta < 3+3\epsilon$. 

\end{proof}

\section{Properties of a Minimum Counterexample}

For the rest of this section, let $G$ be a good graph that is not $5$-Ore with $p(G)>5-P$.

\begin{lemma}\label{NoDiamond}
$G$ is $3$-connected and hence contains no diamond.
\end{lemma}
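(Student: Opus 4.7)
I plan to show first that $G$ has no $2$-vertex cut and then deduce the diamond claim as an immediate consequence. Since every $5$-critical graph is $2$-connected, absence of $2$-cuts yields $3$-connectivity.

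\emph{Step 1 (no $2$-cut).} Suppose $\{u,v\}$ is a $2$-cut; decompose $G=G_1\cup G_2$ with $V(G_1)\cap V(G_2)=\{u,v\}$ and both sides properly containing $\{u,v\}$. The classical coloring-combining argument along a $2$-cut gives $uv\notin E(G)$ and, after swapping sides if necessary, that both $H_1:=G_1+uv$ and $H_2:=G_2/(u\equiv v)$ are not $4$-colorable. Then every $4$-coloring of $G_1=G[V(G_1)]$ satisfies $\phi(u)=\phi(v)$, so $R:=V(G_1)$ (with boundary $\{u,v\}$) is collapsible, and by Proposition~\ref{CritComp} its critical complement $H_2$ is $5$-critical. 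Pick a $5$-critical $K_1\subseteq H_1$; since $K_1-uv\subseteq G$ is a proper subgraph of $G$ and so $4$-colorable, $uv\in E(K_1)$. The Ore-composition of $K_1$ (edge-side, replaced edge $uv$) with $H_2$ (vertex-side, split vertex $w$, split matching the original $G_2$-adjacencies) is $5$-critical and is a subgraph of $G$; by $5$-criticality of $G$ it must equal $G$. Matching edge counts forces $K_1=H_1$ (so $H_1$ itself is $5$-critical) and $N_{G_2}(u)\cap N_{G_2}(v)=\emptyset$. Thus $G=H_1\circ H_2$ is an Ore-composition of two $5$-critical graphs, each smaller than $G$.

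If both $H_1$ and $H_2$ are $5$-Ore then $G$ itself is $5$-Ore, contradicting the hypothesis. So WLOG $H_1$ is not $5$-Ore, giving $p(H_1)\le 5-P$ by goodness. The identities $|V(G)|=|V(H_1)|+|V(H_2)|-1$ and $|E(G)|=|E(H_1)|+|E(H_2)|-1$ together with Lemma~\ref{5OreT} yield
$$p(G)\ =\ p(H_1)+p(H_2)-(5+\epsilon)+\delta\bigl(T(H_1)+T(H_2)-T(G)\bigr)\ \le\ p(H_1)+p(H_2)-(5+\epsilon)+2\delta.$$
I case on $H_2$, recalling $\delta=8\epsilon$ and $P=48\epsilon$. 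If $H_2=K_5$, Lemma~\ref{5OreT} sharpens the $T$-gap to $\le 1$, giving $p(G)\le(5-P)+p(K_5)-(5+\epsilon)+\delta=5-P-4\epsilon$. If $H_2$ is $5$-Ore with $n:=|V(H_2)|\ge 9$, Lemma~\ref{5OrePot} yields $p(H_2)\le 5-n\epsilon-14\epsilon$, whence $p(G)\le 5-P-(n-1)\epsilon<5-P$. If $H_2$ is not $5$-Ore, $p(H_2)\le 5-P$ and $p(G)\le 5-2P+15\epsilon<5-P$ since $P>15\epsilon$. Every sub-case contradicts the hypothesis $p(G)>5-P$, so no $2$-cut exists.

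\emph{Step 2 (no diamond).} Let $D\subseteq G$ be a diamond with $V(D)=\{x,y,c,d,e\}$, $D\cong K_5-xy$, and $c,d,e$ of degree $4$ in $G$. Each of $c,d,e$ has all its $G$-neighbors inside $V(D)$. Since $K_5-xy$ is $4$-colorable while $G$ is not, $V(G)\setminus V(D)\ne\emptyset$. Removing $\{x,y\}$ separates the triangle $\{c,d,e\}$ from $V(G)\setminus V(D)$, yielding a $2$-cut that contradicts Step~1.

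\emph{Main obstacle.} The crux lies in Step~1: verifying that the Ore-composition $K_1\circ H_2$ really equals $G$ (rather than only embedding into it), which forces $K_1=H_1$ and $u,v$ to have no common neighbor in $G_2$; and then confirming strict slack in each sub-case of the potential inequality. The tightest sub-case is $H_2=K_5$, where the stronger assertion of Lemma~\ref{5OreT} is essential to compensate for the $4\epsilon$ excess in $p(K_5)=5+5\epsilon-2\delta$.
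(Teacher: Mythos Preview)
Your proof is correct and follows essentially the same approach as the paper's: both argue that a $2$-cut forces $G$ to be an Ore-composition of two smaller $5$-critical graphs and then use Lemma~\ref{5OreT} together with the inductive potential bounds to derive $p(G)\le 5-P$. The only differences are cosmetic---you supply more detail in establishing the Ore-composition and organize the endgame as an exhaustive case split on $H_2$, whereas the paper first deduces from the inequality that the $5$-Ore side must in fact be $K_5$ and then applies the sharpened $T$-bound once.
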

\begin{proof}
Suppose $G$ is not $3$-connected. Hence there exists a $2$-cut $x,y$ of $G$. That is $G$ is the Ore-composition of two graphs $G_1$ and $G_2$. As $G$ is not $5$-Ore, at least one of $G_1$,$G_2$ is not $5$-Ore. By Lemma~\ref{5OreT}, $T(G)\ge T(G_1)+T(G_2)-2$. Recall that $|V(G)|=|V(G_1)|+|V(G_2)|-1$ and $|E(G)|=|E(G_1)|+|E(G_2)|-1$. Thus $p(G) \le p(G_1)+p(G_2)-5-\epsilon + 2\delta$. Suppose without loss of generality that $G_2$ is not $5$-Ore. Since $G$ is good, $p(G_2)\le 5-P$. So $5-P < p(G)\le p(G_1)- P+2\delta-\epsilon$. So $p(G_1)\ge 5-2\delta + \epsilon$. Thus $G_1$ is $5$-Ore and indeed $G_1=K_5$ by Lemma~\ref{5OrePot}. Since $G_1=K_5$, $T(G)\ge T(G_1)+T(G_2)-1$ by Lemma~\ref{5OreT}. So $p(G)\le p(G_1)+p(G_2)-5-\epsilon+\delta = p(G_2)+4\epsilon-\delta \le 5-P$ since $\delta\ge 4\epsilon$, a contradiction.
\end{proof}

\begin{lemma}\label{NoIdPairMin}
There exists no identifiable pair in a proper subset of $V(G)$.
\end{lemma}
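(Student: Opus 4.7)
Since $Q=\delta>0$, we have $p(G)>5-P\ge 5-P-Q$; combined with $G$ being good, this shows that $G$ is tight, so Lemma~\ref{CollSmall} and the other tightness-based tools of Section~4 are available. The strategy is to mimic the proof of Lemma~\ref{NoIdPair}, replacing the ``ungemmed'' hypothesis by the absence of diamonds from Lemma~\ref{NoDiamond}, and then handling the emerald alternative by a potential argument specific to the minimum-counterexample setting.

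Suppose for contradiction that $G$ has an identifiable pair in some proper subset of $V(G)$. Following the setup of Lemma~\ref{NoIdPair}'s proof, choose $(R,u,v)$ with $|R|$ minimized and, subject to that, with $u,v$ on the boundary of $R$ if possible, and assume without loss of generality that $K=G[R]+uv$ is $5$-critical. Lemma~\ref{CollSmall} then gives that $R$ is collapsible (so $u,v$ lie on the boundary of $R$) and that either $K$ is $5$-Ore or the critical complement $W$ of $R$ is $5$-Ore. The two-case analysis of Lemma~\ref{NoIdPair} goes through verbatim: in Case~1 ($W$ is $5$-Ore), Lemma~\ref{DisjointGem} produces a diamond or emerald of $W$ disjoint from the special vertex, which pulls back to a diamond or emerald of $G$; in Case~2 ($K$ is $5$-Ore), the minimality of $(R,u,v)$ rules out the second outcome of Lemma~\ref{5OreColl}, so the first outcome yields an Ore-collapsible subset $R'\subseteq R$ of $G$, and Lemma~\ref{OreCollToGem} then produces a diamond or emerald of $G[R']$. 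Thus in either case $G$ contains a diamond or emerald; by Lemma~\ref{NoDiamond} the diamond alternative is impossible, so we obtain an emerald $E=\{a,b,c,d\}$ of $G$.

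To rule out the emerald, I will use the rigidity $E$ imposes on $4$-colorings. A Hall's theorem argument on the extension problem shows that a $4$-coloring of $G-V(E)$ extends to $G$ if and only if the four outside neighbors $a',b',c',d'$ of $E$ do not all share a color; since $G$ is not $4$-colorable, every $4$-coloring of $G-V(E)$ assigns $a',b',c',d'$ a common color. Hence $R^*:=V(G)\setminus V(E)$ is collapsible with critical complement isomorphic to $K_5$. Applying Lemma~\ref{Extension} to the $K_5$-critical extension of $R^*$ (which is complete, spanning, with core of size one) yields $p(G)\le p_G(R^*)-4+4\epsilon-2\delta$. On the other hand, $\{a',b'\}$ is an identifiable pair in $R^*$, so there is a $5$-critical subgraph $K'\subseteq G[R^*]+a'b'$; by the goodness of $G$ we have $p(K')\le 5-P$ (the $5$-Ore possibilities for $K'$ being handled separately via Lemma~\ref{5OrePot} together with Lemma~\ref{CollSmall}). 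A careful accounting of how adding the single edge $a'b'$ can change $T$ should give $p_G(R^*)\le p(K')+4+\delta\le 9-P+\delta$, and substituting yields $p(G)\le 5-P-\delta+4\epsilon$; since $\delta=8\epsilon>4\epsilon$, this contradicts $p(G)>5-P$.

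The main obstacle will be controlling $T(K')-T(G[R^*])$, i.e.\ showing that the edge $a'b'$ increases $T$ by at most one. An increase of two can occur only if $a'$ and $b'$ have two common neighbors $w_1,w_2$ with $w_1w_2\in E(G)$, creating a new $K_4$ on $\{a',b',w_1,w_2\}$. I expect to rule this configuration out by combining the minimality of $(R,u,v)$ (any such $K_4-e$ with its outside neighbors would give a smaller identifiable pair) with Lemma~\ref{NoDiamond} and the degree constraints inherited from the emerald $E$, forcing either a second gem or a structure that reduces to the previous cases.
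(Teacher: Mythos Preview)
Your route is more convoluted than needed, and the emerald step has a genuine gap.

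First, in your Case~2 you do not need Lemma~\ref{OreCollToGem}: an Ore-collapsible subset has boundary consisting of exactly two vertices, so its existence already contradicts the $3$-connectivity part of Lemma~\ref{NoDiamond}. Thus once Lemma~\ref{5OreColl} produces an Ore-collapsible subset, Case~2 is finished without ever manufacturing a gem.

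With Case~2 dispatched, the only remaining case is that $K=G[R]+uv$ is \emph{not} $5$-Ore (so $W$ is $5$-Ore by Lemma~\ref{CollSmall}), and here the paper avoids gems entirely via a direct potential computation. Since $K$ is not $5$-Ore and $G$ is good, $p(K)\le 5-P$, whence $p_G(R)\le p(K)+4+\delta\le 9-P+\delta$. Feeding this into Lemma~\ref{Extension} (with extender $W$ and core of size one) and using $p(G)>5-P$ forces $p(W)>5-2\delta+\epsilon$; by the $5$-Ore bound in Theorem~\ref{Main} (which holds via Lemma~\ref{5OrePot}), this is only possible when $W=K_5$. But for $W=K_5$ with core $X$ of size one we have $T(W)=T(W\setminus X)=2$, so the $\delta$-term in Lemma~\ref{Extension} vanishes, yielding $p(G)\le p_G(R)+p(K_5)-9-\epsilon\le p(K)+4\epsilon-\delta\le 5-P$, a contradiction.

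By contrast, your emerald argument has a real hole. The inequality $p_G(R^*)\le p(K')+4+\delta$ is not justified: the $5$-critical subgraph $K'$ of $G[R^*]+a'b'$ need not span $R^*$, so at best you bound $p_G(V(K'))$, not $p_G(R^*)$. The minimality of $(R,u,v)$ gives no leverage here either, since in Case~1 the emerald lies in $V(G)\setminus R$, whence $R\subseteq R^*$ and $|V(K')|$ may well exceed $|R|$. Your handling of the possibility that $K'$ is $5$-Ore is also only a gesture. Finally, the obstacle you flag about $T(K')-T(G[R^*])$ is genuine, and your sketch for resolving it would need Lemma~\ref{NoIdPairMin} itself (or Lemma~\ref{Ungemmed}, which is proved afterwards using it), so the argument threatens to be circular. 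The paper's direct potential computation sidesteps all of these issues.
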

\begin{proof}
Note that Lemma~\ref{CollSmall} holds for $G$ as $G$ is tight. We now repeat the proof of Lemma~\ref{NoIdPair} except for when $W$ is $5$-Ore. Note that the case when $K$ is $5$-Ore yields a contradiction because $G$ has no Ore-collapsible set as $G$ is $3$-connected by Lemma~\ref{NoDiamond}. Thus we may assume that $K=R+uv$ is not $5$-Ore. Recall that $R$ is collapsible and $W$ has a core of size one. By Lemma~\ref{Extension}, $p(G) \le p_G(R) + p(W) - 9 - \epsilon + \delta$. Yet $p_G(R) \le p(K)+4+\delta$. As $G$ is good and $K$ is not $5$-Ore, $p(K)\le 5-P$. Hence $p(G) \le p(W) - P - \epsilon + 2\delta$. Yet $p(G) > 5-P$. So $p(W) \ge 5 - 2\delta + \epsilon$. By Lemma~\ref{5OrePot}, $W=K_5$. But in that case $T(W)=T(W-X)$ and hence $p(G) \le p_G(R) + p(W) - 9 - \epsilon \le p(K) + 4 +\delta + 5 + 5\epsilon - 2\delta - 9 -\epsilon = p(K) +4\epsilon - \delta \le p(K) \le 5-P$ since $\delta\ge 4\epsilon$, a contradiction.
\end{proof}

\begin{corollary}\label{NoK5e}
$G$ does not contain $K_5-e$ as a subgraph.
\end{corollary}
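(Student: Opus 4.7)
The plan is to derive the corollary directly from Lemma~\ref{NoIdPairMin}. Suppose for contradiction that $G$ contains $K_5-e$ as a subgraph, and let its vertex set be $\{a,b,c,u,v\}$, where $uv$ is the missing edge. Since $G$ is not $5$-Ore, $G\ne K_5$, so $|V(G)|\ge 6$ and the set $R=\{a,b,c,u,v\}$ is a proper subset of $V(G)$ with $|R|=5$, which is the size required by the definition of an identifiable pair.

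Next I would observe that $G[R]$ contains every edge of $K_5-e$, so $G[R]+uv$ contains $K_5$ as a subgraph and in particular is not $4$-colorable. By definition, this makes $u,v$ an identifiable pair in the proper subset $R$ of $V(G)$. This directly contradicts Lemma~\ref{NoIdPairMin}, completing the proof.

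The main obstacle is essentially nonexistent: all the real work has been done in Lemma~\ref{NoIdPairMin}, and the only thing to verify is that the configuration $K_5-e$ produces a non-$4$-colorable graph upon adding the missing edge, together with checking that $R$ is a proper subset, which follows from the hypothesis that $G$ is not $5$-Ore (and hence not $K_5$).
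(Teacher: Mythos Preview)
Your proof is correct and is exactly the intended argument: the paper states Corollary~\ref{NoK5e} immediately after Lemma~\ref{NoIdPairMin} without proof, since a copy of $K_5-e$ on a proper subset $R$ yields an identifiable pair (the endpoints of the missing edge) in $R$, contradicting Lemma~\ref{NoIdPairMin}. One small quibble: the definition of an identifiable pair does not impose any size requirement on $R$ (you may be thinking of the $|R|\ge 5$ hypothesis in the collapsibility and extension lemmas), so your remark that $|R|=5$ is ``the size required by the definition'' is unnecessary---but it does no harm.
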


\begin{lemma}\label{Ungemmed}
$G$ contains no emerald and hence is ungemmed.
\end{lemma}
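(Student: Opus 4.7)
The plan is to suppose for contradiction that $G$ contains an emerald on vertex set $V(E) = \{v_1, v_2, v_3, v_4\}$, and then to derive a contradiction by extending a $4$-coloring of the proper subgraph $G - V(E)$ to a $4$-coloring of all of $G$. Since each $v_i$ has degree $4$ in $G$ and three of its neighbors are the other $v_j$'s, it has exactly one external neighbor $u_i \in V(G) \setminus V(E)$ (the $u_i$ are not required to be distinct).

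I would first dispose of the degenerate case where $u_1 = u_2 = u_3 = u_4 = u$. In that case $G[\{u, v_1, v_2, v_3, v_4\}]$ contains all ten edges and so equals $K_5$; since $G$ is $5$-critical this forces $G = K_5$, contradicting the standing assumption that $G$ is not $5$-Ore. Hence at least two of the $u_i$'s differ, and I may relabel so that $u_1 \ne u_2$.

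The next step is to produce a proper $4$-coloring $\phi$ of $G[V(G) \setminus V(E)]$ with $\phi(u_1) \ne \phi(u_2)$. If $u_1 u_2 \in E(G)$, any $4$-coloring of the proper subgraph $G - V(E)$ works automatically. Otherwise $u_1, u_2$ are nonadjacent vertices in the proper subset $V(G) \setminus V(E)$; then $G[V(G) \setminus V(E)] + u_1 u_2$ must be $4$-colorable, for otherwise $u_1, u_2$ would form an identifiable pair in this proper subset, contradicting Lemma~\ref{NoIdPairMin}. In either case the desired $\phi$ exists.

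Finally I would extend $\phi$ to $G$ by choosing colors for $v_1, \ldots, v_4$. Since $G[V(E)] = K_4$, the $v_i$'s must receive four distinct colors, and each $v_i$ must avoid $\phi(u_i)$. This is a perfect matching problem in the bipartite graph on $\{v_1, v_2, v_3, v_4\}$ versus $\{1,2,3,4\}$ where $v_i$ is joined to every color except $\phi(u_i)$. Each $v_i$ has exactly $3$ allowed colors, and a quick verification of Hall's condition shows that a perfect matching exists unless all four values $\phi(u_i)$ are equal, which is ruled out since $\phi(u_1) \ne \phi(u_2)$. The resulting $4$-coloring of $G$ contradicts the fact that $G$ is $5$-critical. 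The only nontrivial point in this argument is producing $\phi$ with $\phi(u_1) \ne \phi(u_2)$, and this is exactly what Lemma~\ref{NoIdPairMin} is designed to provide.
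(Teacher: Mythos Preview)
Your proof is correct and follows essentially the same approach as the paper's: both arguments pick two emerald vertices with distinct external neighbours $u_1\ne u_2$, invoke Lemma~\ref{NoIdPairMin} to obtain a $4$-coloring of $G-V(E)$ with $\phi(u_1)\ne\phi(u_2)$, and then extend to all of $G$. The paper is terser (it uses ``not in the same cluster'' in place of your $K_5$ case and leaves the extension step implicit), while you spell out the $u_1u_2\in E(G)$ case and the Hall's condition argument explicitly.
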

\begin{proof}
Suppose not and let $E$ be an emerald of $G$. As $G\ne K_5$, there exist vertices $a,b\in E$ such that $a$ and $b$ are not in the same cluster. Let $u$ be the neighbor of $a$ not in $E$ and let $v$ be the neighbor of $b$ not in $E$. Hence $u\ne v$. But now $u$ and $v$ are an identifiable pair in $V(G)-V(E)$ contradicting Lemma~\ref{NoIdPairMin}.
\end{proof}

\subsection{Almost $5$-Ore Graphs}

Here is a crucial definition.

\begin{definition}
A graph is \emph{almost $5$-Ore} if it can be obtained from a $5$-Ore graph by deleting a vertex in a cluster of size at least two. We call any other vertex in that cluster \emph{special}.
\end{definition}

\begin{definition}
We define $D_4(G)$ to be the subgraph of $G$ induced by the vertices of degree four.
\end{definition}

The next lemma is useful in finding almost $5$-Ore subgraphs in $G$.

\begin{lemma}\label{Special}
If $uv$ is an edge of $D_4(G)$, then $u$ is a special vertex of an almost $5$-Ore subgraph of $G-v$. 
\end{lemma}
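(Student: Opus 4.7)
The plan is to construct an auxiliary graph in which the desired cluster structure is made explicit, then to extract the almost $5$-Ore subgraph from a suitable $5$-critical subgraph of it. Let $u_1, u_2, u_3$ denote the three neighbors of $u$ other than $v$, and let $w_1, w_2, w_3$ denote the three neighbors of $v$ other than $u$. Since $G$ is $5$-critical and $v$ has degree four, every $4$-coloring of $G - v$ assigns all four colors to $\{u, w_1, w_2, w_3\}$. I would introduce a new vertex $v^{\ast}$ adjacent exactly to $N_G[u] \setminus \{v\} = \{u, u_1, u_2, u_3\}$ and form the auxiliary graph $G^{\dagger} := (G - v) + v^{\ast}$. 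By construction, $u$ and $v^{\ast}$ have the same closed neighborhood in $G^{\dagger}$, so $\{u, v^{\ast}\}$ is a cluster of size two in $G^{\dagger}$.

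First I would dispose of the easy case where $\{u_1, u_2, u_3\}$ induces a triangle in $G$: then $\{u, u_1, u_2, u_3\}$ induces a $K_4$ in $G - v$, and adjoining $v^{\ast}$ yields a $K_5$, whose unique (size-five) cluster contains $\{u, v^{\ast}\}$. The induced $K_4$ is therefore an almost $5$-Ore subgraph of $G - v$ with $u$ as a special vertex, and the lemma is proved. Otherwise I would extract a $5$-critical subgraph $K$ of $G^{\dagger}$ containing $v^{\ast}$. Since $v^{\ast}$ has degree four in $G^{\dagger}$, the minimum-degree condition in $K$ forces $\{u, u_1, u_2, u_3\} \subseteq V(K)$ together with every edge of $G^{\dagger}$ incident to $v^{\ast}$ or $u$; in particular, the cluster $\{u, v^{\ast}\}$ is inherited by $K$. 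Provided $K$ is $5$-Ore, $H := K - v^{\ast}$ is then the desired almost $5$-Ore subgraph of $G - v$ with $u$ as a special vertex.

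The main obstacle lies in this general case, and it is twofold. First, $G^{\dagger}$ is $5$-chromatic only when every $4$-coloring of $G - v$ gives pairwise distinct colors to $u_1, u_2, u_3$; when some $4$-coloring assigns the same color to two of them, I would adjust the construction by adding the appropriate edge among $\{u_1, u_2, u_3\}$ (or by iterating with a modified auxiliary vertex) to force $5$-chromaticity while preserving the cluster $\{u, v^{\ast}\}$. Second, and more delicately, one must show that the extracted $5$-critical subgraph $K$ is actually $5$-Ore. By Theorem~\ref{Ore5EqualityP} this reduces to verifying $p_{KY}(K) \geq 3$, which I would establish by applying Lemma~\ref{Extension} to the natural embedding of $V(K) \setminus \{v^{\ast}\}$ into $V(G) \setminus \{v\}$ and combining with the standing hypothesis $p(G) > 5 - P$ that $G$ is a counterexample. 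The structural restrictions from Section~4 -- the absence of identifiable pairs (Lemma~\ref{NoIdPairMin}), the absence of clusters of size two (Lemma~\ref{NoCluster2}), and the ungemmedness of $G$ (Lemma~\ref{Ungemmed}) -- should rule out the alternative $p_{KY}(K) \leq 2$, because that case would produce a proper subset of $V(G)$ whose potential is low enough to force $p(G) \leq 5 - P$, contradicting the assumption on $G$.
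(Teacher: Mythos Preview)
Your auxiliary construction $G^\dagger$ is exactly the paper's $G'$, and your observation that $\{u,v^\ast\}$ forms a cluster of size two in any $5$-critical subgraph $K$ is correct and is the heart of the argument. However, both of your ``obstacles'' are misdiagnosed.

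Your first obstacle is illusory: $G^\dagger$ is \emph{always} not $4$-colorable. Given any $4$-coloring $\phi$ of $G^\dagger$, uncolor $u$, then color $v$ with any color avoiding $\phi(w_1),\phi(w_2),\phi(w_3)$, and finally recolor $u$ with whichever of $\phi(u),\phi(v^\ast)$ differs from the color just given to $v$ (both of these avoid $\phi(u_1),\phi(u_2),\phi(u_3)$ and they are distinct from each other). This yields a $4$-coloring of $G$, a contradiction. So no edge-adding patch is needed; indeed, adding edges among the $u_i$ would damage the potential bookkeeping you need later.

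Your second obstacle is real, but your plan does not reach the hard case. Setting $R=V(K)\setminus\{v^\ast\}\subseteq V(G)$, one has $p_G(R)\le p(K)+7+\delta$, and Lemma~\ref{PotCore4} then gives the dichotomy: either $p_G(R)$ is large enough to force $p(K)>5-P$ (hence $K$ is $5$-Ore, since $K$ is smaller than $G$ and $G$ is good), or $G\setminus R$ is a single degree-four vertex, namely $v$. It is this second branch that carries all the difficulty, and your proposal does not address it. When $R=V(G)\setminus\{v\}$ one cannot squeeze a contradiction out of the potential of a \emph{proper} subset of $G$; instead the paper observes that in this branch $p(K)\ge p(G)-\delta$, so $K$ itself is \emph{tight}. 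The structural lemmas of Section~4 (in particular Lemma~\ref{NoCluster2}) are then applied to $K$, not to $G$: since $K$ has the cluster $\{u,v^\ast\}$, $K$ must fail to be ungemmed, and a short case analysis on the resulting diamond or emerald in $K$ (tracking how $v^\ast$ can sit inside it and using Lemmas~\ref{NoK5e}, \ref{ClusterInAK4}, \ref{Ungemmed} back in $G$) produces the contradiction. Your sketch applies these lemmas only to $G$, which is not where the cluster lives; this is precisely why the paper introduces the notion of ``tight'' graphs rather than working solely with the minimum counterexample.
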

\begin{proof}
Let $uv$ be an edge of $D_4(G)$. Let $G'$ be obtained from $G$ by deleting $v$ and adding a new vertex $u'$ adjacent to $u$ and the neighbors of $u$. Note that $u,u'$ are degree four in $G'$. Moreover, $G'$ is not $4$-colorable as otherwise a $4$-coloring of $G'$ can be extended to a $4$-coloring of $G$ by coloring $v$ and then coloring $u$ with a color of $u$ or $u'$ different from the color of $v$. Let $K$ be a $5$-critical subgraph of $G'$ and let $R=K-u'$. Note that $p(R)\le p(K) + 7 + \delta$. By Lemma~\ref{PotCore4}, it follows that either $K$ is $5$-Ore or that $G\setminus R$ is a single vertex of degree four, namely, $v$. 
If $K$ is $5$-Ore, then $u$ is a special vertex of $K-u'$ which is an almost $5$-Ore subgraph of $G-v$ as desired.

So we may suppose that $K$ is not $5$-Ore and hence that $G\setminus R$ is a single vertex of degree four. It follows that $p(K)\ge p(G)-\delta$ and hence $K$ is tight as $Q\ge \delta$. If $K$ is ungemmed, then $\{u,u'\}$ is a cluster of size two in $K$, contradicting Lemma~\ref{NoCluster2}. 
 
So we may assume that $K$ has either a diamond $D$ or an emerald $E$. First suppose that $K$ contains a diamond $D$. Then $u'\in V(D)$ as otherwise $G$ contains a subgraph isomorphic to $K_5-e$, contradicting Lemma~\ref{NoK5e}. We may as well assume then without loss of generality that $u\in V(D)$ and that $u,u'$ are degree four in $D$. Let $x$ be the other vertex of degree four in $D$. Now $x$ must have degree at least four in $G$. Hence $x$ is adjacent to not only the vertices of $D-u'$ but also to $v$. But then $u$ and $x$ are in a cluster in $G$, contradicting Lemma~\ref{NoCluster2} since $G$ is tight and $G$ is ungemmed by Lemma~\ref{Ungemmed}.

So we may suppose that $K$ contains an emerald $E$. If $u'\in E$, then we may assume that $u$ is also in $E$. But then the other vertices of $E-\{u,u'\}$ must be adjacent to $v$ since they have degree at least four in $G$. So $E-u'+v$ is in fact an emerald in $G$, contradicting Lemma~\ref{Ungemmed}. So we may assume $u'\not\in E$. It follows that $E$ is a subgraph of $G$ isomorphic to $K_4$. Let $X$ be the set of vertices of $E$ of degree four in $G$. By Lemma~\ref{ClusterInAK4}, $E$ is a cluster. By Lemma~\ref{NoCluster2}, $|E|\le 1$. Thus $E-X$ are vertices of degree at least five in $G$ but degree four in $G'$. Hence every vertex in $E-X$ must be adjacent to $v$. But then $E\cup v$ has a subgraph isomorphic to $K_5-e$, contradicting Lemma~\ref{NoK5e}.

\end{proof}

Next we need the following general propositions about $5$-Ore graphs but first a definition.

\begin{definition}
Let $H$ be an almost $5$-Ore graph and let $w$ be a special vertex of $H$. A \emph{frame} of $H$ with special vertex $w$ is a graph $J$ isomorphic to $K_4$ such that $V(J)=w\cup N_H(w)$ and $H$ can be obtained from $J$ by Ore-composition with $5$-Ore graphs whose replaced edges are in $E(J)$. We call the vertices of $J$ the \emph{corners} of the frame and the graphs used for the Ore-composition the \emph{bars}.
\end{definition}

\begin{proposition}\label{K4Frame}
If $H$ is $5$-Ore and $wv\in E(H)$ such that $w$ has degree four in $H$ and there does not exist an identifiable pair in $H-v$, then there exists an almost $5$-Ore subgraph $H'$ of $H-v$ such that $w$ is the special vertex and a frame $J$ of $H'$ with special vertex $w$.
\end{proposition}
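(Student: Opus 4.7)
I plan to prove Proposition~\ref{K4Frame} by induction on $|V(H)|$. The base case $H = K_5$ is immediate: all five vertices of $K_5$ share the closed neighborhood $V(K_5)$ and have degree four, so they form a single cluster of size five; hence $H' = H - v = K_4$ is almost $5$-Ore with $w$ as a special vertex, and the frame $J$ is $K_4$ itself with no Ore-compositions required.

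For the inductive step, assume $H \ne K_5$ and write $H$ as the Ore-composition of smaller $5$-Ore graphs $H_1$ (edge side, replaced edge $xy$) and $H_2$ (vertex side, split vertex $z$ identified as $z_1 = x$, $z_2 = y$). A short degree calculation, using $\deg_H(w) = 4$ together with the fact that every $5$-Ore graph has minimum degree at least four, shows that $w$ lies in the ``interior'' of one of the two sides (WLOG $w \in V(H_1) \setminus \{x,y\}$, so $N_H(w) = N_{H_1}(w)$), except for a degenerate boundary case $w \in \{x,y\}$ with a very asymmetric split ($\deg(z_1)=1$, $\deg(z_2)=3$), which is handled by a separate direct argument using the $K_5$-substructure present near the boundary. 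Since $N_H(w) \subseteq V(H_1)$, we also have $v \in V(H_1)$. Applying the inductive hypothesis to $H_1$ yields an almost $5$-Ore subgraph $H_1' \subseteq H_1 - v$ with $w$ as special vertex and frame $J = K_4$ on $\{w\} \cup (N_H(w) \setminus \{v\})$. If $H_1'$ does not use the replaced edge $xy$, then $H_1' \subseteq H - v$ and we are done. Otherwise, the unique occurrence of $xy$ in $H_1'$ (as an edge of $J$ or inside some bar) can be replaced by an Ore-composition with $H_2$: since Ore-composition of $5$-Ore graphs remains $5$-Ore, the source $5$-Ore graph witnessing that $H_1'$ is almost $5$-Ore extends along $xy$ using $H_2$ as the new bar, and $w$ remains in its cluster (because $w \ne x,y$); removing $w$'s cluster twin from this extended source yields the desired $H' \subseteq H - v$, whose new vertices come from $V(H_2) \setminus \{z\}$ and are disjoint from $v$.

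The main technical obstacle is verifying that the inductive hypothesis applies, i.e.\ that ``no identifiable pair in $H - v$'' implies ``no identifiable pair in $H_1 - v$''. This does not follow automatically because $H$ and $H_1$ differ by the deletion of $xy$ in $H$ and by the attachment of $H_2$'s vertices. I plan to argue by contrapositive: given an identifiable pair $(u_1, u_2)$ in $H_1 - v$, I would take a hypothetical $4$-coloring of $(H - v) + u_1 u_2$ and argue that the $5$-criticality of $H_2$ (applied via the Ore-composition, exactly as in Proposition~\ref{Phi} and Lemma~\ref{Extension}) forces $x$ and $y$ to receive different colors; restricting to $V(H_1) - v$ then gives a $4$-coloring of $(H_1 - v) + u_1 u_2$, contradicting identifiability in $H_1 - v$. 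The subcases where $v \in \{x, y\}$ require separate treatment, using the no-identifiable-pair hypothesis together with Lemma~\ref{DisjointGem} to force a suitable $K_4$-frame directly from the structure of $H_2$ near the boundary.
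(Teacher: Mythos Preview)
Your inductive skeleton and your treatment of the case $w\in V(H_1)\setminus\{x,y\}$ (including the Ore-composition with $H_2$ when $xy\in E(H')$, and the contrapositive argument that an identifiable pair in $H_1-v$ would force $\phi(x)\ne\phi(y)$ via $5$-criticality of $H_2$) are essentially what the paper does. But two genuine gaps remain.

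\textbf{The missing first step.} The paper's proof begins by using the no-identifiable-pair hypothesis to force $v\in V(H_1)$: if $v\notin V(H_1)$ then $V(H_1)\subseteq V(H-v)$ and $H[V(H_1)]+xy=H_1$ is $5$-Ore, so $x,y$ would be an identifiable pair in $H-v$. You never make this observation. Instead you derive the location of $v$ from $w$, which is why you are forced into the unjustified ``WLOG''. The edge side $H_1$ and the vertex side $H_2$ are \emph{not} symmetric, so the case $w\in V(H_2)\setminus\{z\}$ is not interchangeable with $w\in V(H_1)\setminus\{x,y\}$. Concretely: if you try to run your argument with $w\in V(H_2)\setminus\{z\}$ and $v\in V(H_2)\setminus\{z\}$, induction on $(H_2,w,v)$ produces an $H'\subseteq H_2-v$ that may contain $z$, and $z$ does not exist in $H$ (it is split into $x$ and $y$), so $H'$ need not embed in $H-v$. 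Once $v\in V(H_1)$ is established, this problem disappears: $w\in V(H_2)\setminus\{z\}$ forces $v\in\{x,y\}$, hence $wz\in E(H_2)$, and induction applied to $(H_2,w,z)$ yields $H'\subseteq H_2-z\subseteq H-v$ directly.

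\textbf{The boundary case $w\in\{x,y\}$.} This is not resolved by any ``$K_5$-substructure near the boundary''; the paper instead shows the case cannot occur. With $v\in V(H_1)$ already known and (say) $w=x$, one has $v\ne y$ since $xy\notin E(H)$. Your degree count gives $\deg_{H_1}(x)=4$ and $\deg(z_1)=1$; let $w'$ be the unique neighbour of $z_1$. Then on the set $(V(H_2)\setminus\{z\})\cup\{y\}$ the induced subgraph of $H$ is exactly $H_2$ with $z$ renamed $y$ and the single edge $yw'$ deleted, so adding $yw'$ recovers a copy of $H_2$. Hence $y,w'$ is an identifiable pair in $H-v$, contradicting the hypothesis. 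Your sketch invoking Lemma~\ref{DisjointGem} does not produce this contradiction.
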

\begin{proof}
We proceed by induction on $|V(H)|$. If $H=K_5$, the lemma follows with $H'=J=H-v$. So we may suppose $H\ne K_5$. As $H$ is $5$-Ore, then $H$ is the Ore-composition of two $5$-Ore graphs $H_1$ and $H_2$. Without loss of generality suppose that $H_1$ is the edge-side with replaced edge $xy$ and $H_2$ is the vertex-side of the composition with split vertex $z$. 

Since there does not exist a collapsible subset of $H-v$, it follows that $v\in V(H_1)$. First suppose $w\in V(H_2)$. By induction on $H_2$, there exists an almost $5$-Ore subgraph $H'$ of $H_2-z$ with frame $J$ and hence of $H-v$ as desired. Next suppose $w\in V(H_1)\setminus \{x,y\}$. By induction on $H_1$, there exists an almost subgraph $H'$ with frame $J$ of $H_1-v$. If $xy\not\in E(H')$, then $H'$ is a subgraph of $H-v$ as desired. So we may assume that $xy\in E(H')$. Then let $H''$ be obtained from $H'$ by an Ore-composition with $H_2$ whose replaced edge is $xy$. Hence $H''$ is a subgraph of $H-v$ as desired.

So we may assume that $w=x$ without loss of generality. Hence $v\ne y$ since $wv \in E(H)$. Since $w$ has degree at least four in $H_1$ and at least one neighbor in $H_2$, we find that $w$ has degree exactly four in $H_1$ and exactly one neighbor, call it $w'$, in $H_2$. But then $w',y$ is an identifiable pair in $H-v$, a contradiction. 
\end{proof}

\begin{proposition}\label{ReplacedK5}
If $H$ is an almost $5$-Ore graph with special vertex $w$ and there does not exist an identifiable pair in $H$, then there exists a frame of $H$ with special vertex $w$.
\end{proposition}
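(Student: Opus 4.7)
Since $H$ is almost $5$-Ore with special vertex $w$, write $H=\mathcal{H}-v$ where $\mathcal{H}$ is $5$-Ore and $\{v,w\}$ lies in a cluster of $\mathcal{H}$. In particular $wv\in E(\mathcal{H})$, $w$ has degree four in $\mathcal{H}$, and the hypothesis that $H$ contains no identifiable pair is exactly the hypothesis of Proposition~\ref{K4Frame} applied to $\mathcal{H}$ with the edge $wv$. That proposition produces an almost $5$-Ore subgraph $H'\subseteq \mathcal{H}-v=H$ with $w$ as special vertex, together with a frame $J$ of $H'$ with special vertex $w$. Since $w$ has degree three in both $H$ and $H'$ and $H'\subseteq H$, we have $V(J)=\{w\}\cup N_{H'}(w)=\{w\}\cup N_H(w)$; hence $J$ will be the desired frame of $H$ once we show $H'=H$.

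To prove $H'=H$, I would induct on $|V(\mathcal{H})|$. The base case $\mathcal{H}=K_5$ is immediate from the base case inside Proposition~\ref{K4Frame}. Otherwise $\mathcal{H}$ is the Ore-composition of two smaller $5$-Ore graphs $\mathcal{H}_1$ (the edge-side, with replaced edge $xy$) and $\mathcal{H}_2$ (the vertex-side, with split vertex $z$). The no-identifiable-pair hypothesis forces $v\in V(\mathcal{H}_1)$: otherwise $x,y$ would be an identifiable pair in the subset $V(\mathcal{H}_1)\subseteq V(H)$, since $\mathcal{H}_1$ is $5$-critical with replaced edge $xy$. The cluster condition then forces $v,w\in V(\mathcal{H}_1)\setminus\{x,y\}$: $v$ strictly interior to $\mathcal{H}_1$ implies all its neighbors---in particular $w$ and the three common neighbors of $v,w$---lie in $V(\mathcal{H}_1)$, while $v\in\{x,y\}$ would force $w$ to share with $v$ a neighbor in $V(\mathcal{H}_2)\setminus\{z\}$, which is impossible. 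Consequently $\{v,w\}$ is a cluster of $\mathcal{H}_1$, so $\mathcal{H}_1-v$ is almost $5$-Ore with special vertex $w$. By induction applied to $\mathcal{H}_1-v$ there is a frame $J$ of $\mathcal{H}_1-v$ with special vertex $w$. Since $H$ is the Ore-composition of $\mathcal{H}_1-v$ and $\mathcal{H}_2$ along $xy\in E(\mathcal{H}_1-v)$, I extend the frame by one further Ore-composition: if $xy\in E(J)$ is still an unused edge of the frame, we add $\mathcal{H}_2$ as a new bar along $xy$; otherwise $xy$ lies inside some existing bar $B$, and we replace $B$ by the Ore-composition of $B$ with $\mathcal{H}_2$ along $xy$, which remains $5$-Ore as the Ore-composition of two $5$-Ore graphs. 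In either case we obtain a frame of $H$ with special vertex $w$, establishing $H'=H$.

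The principal obstacle is justifying the induction: $\mathcal{H}_1-v$ must itself contain no identifiable pair. Direct propagation from the hypothesis on $H$ fails because $H[V(\mathcal{H}_1)\setminus\{v\}]=\mathcal{H}_1-v-xy$ differs from $\mathcal{H}_1-v$ precisely by the edge $xy$. I would resolve this with a $4$-coloring argument: any identifiable pair $p,q$ in $\mathcal{H}_1-v$, combined with the no-identifiable-pair property of $H$ for the pairs $\{p,q\}$ and $\{x,y\}$, forces $H+pq+xy$ to be non-$4$-colorable, producing a $5$-critical subgraph $K^+\subseteq H+pq+xy$ that uses both added edges. Analysing where $V(K^+)$ sits relative to the Ore-decomposition---and using that $\mathcal{H}_2$ is $5$-critical, so any vertex of $K^+$ strictly inside $\mathcal{H}_2$'s side would allow one to $4$-color that side independently through the split of $\{x,y\}$---one deduces $V(K^+)\subseteq V(\mathcal{H}_1)\setminus\{v\}$, which then contradicts the no-identifiable-pair hypothesis for $H$ applied to $pq$ or $xy$ individually. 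Carrying out this coloring analysis cleanly is the main technical step; the remainder of the argument is a routine unwinding of the Ore-composition definition.
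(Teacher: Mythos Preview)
Your first paragraph is exactly the paper's setup, and it is correct. The divergence is in how you prove $H'=H$. The paper does this in one line, with no induction: since $H'$ is almost $5$-Ore with special vertex $w$, adding back a clone of $w$ yields a $5$-Ore graph; but $v$ is precisely such a clone in $\mathcal{H}$ (because $\{v,w\}$ is a cluster there and $N_{H'}(w)=N_H(w)=N_{\mathcal H}(w)\setminus\{v\}$), so $H'\cup\{v\}$ is a $5$-Ore subgraph of $\mathcal{H}$. Every $5$-Ore graph is $5$-critical, and a $5$-critical graph has no proper $5$-critical subgraph, hence $H'\cup\{v\}=\mathcal{H}$ and $H'=H$. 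That is the entire argument.

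Your inductive route via the Ore-decomposition of $\mathcal{H}$ is therefore unnecessary, and as written it has a real gap. You correctly identify the obstacle: to recurse into $\mathcal{H}_1$ you need $\mathcal{H}_1-v$ to have no identifiable pair, but the hypothesis on $H$ only controls $\mathcal{H}_1-v-xy$. Your sketched fix does not go through. You propose to study a $5$-critical $K^+\subseteq H+pq+xy$ using both added edges and argue $V(K^+)\subseteq V(\mathcal{H}_1)\setminus\{v\}$, then derive a contradiction from the no-identifiable-pair hypothesis applied to $pq$ or $xy$ individually. But that final step fails: $H[V(K^+)]+pq$ and $H[V(K^+)]+xy$ can each be $4$-colorable (no identifiable pair) while $H[V(K^+)]+pq+xy$ is not, so knowing $V(K^+)\subseteq V(\mathcal{H}_1)\setminus\{v\}$ gives no contradiction. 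Moreover, the containment itself is not obvious, since $K^+$ might well use vertices from the $\mathcal{H}_2$ side together with the edge $xy$. You would need a substantially more delicate argument here, and there is no reason to attempt it when the criticality observation above closes the proof immediately.
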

\begin{proof}
Let $H$ be obtained from a $5$-Ore graph $K$ by deleting a vertex $v$ in a cluster of size at least two. Note that $wv\in E(K)$ and $w$ has degree four in $H$. Moreover, there does not exist an identifiable pair in $H-v$. By Proposition~\ref{K4Frame}, there exists an almost $5$-Ore subgraph $K'$ of $K-v$ with frame $J$ whose special vertex is $w$. But then $K'\cup \{w\}$ is a $5$-Ore subgraph of $K$ and hence $K=K'\cup \{w\}$. Thus $H=K'$ and $J$ is a frame of $H$ as desired. 
\end{proof}

\begin{lemma}\label{InTheBar}
Let $H$ be an almost $5$-Ore graph with frame $J$ such that there does exist an identifiable pair in any proper subset of $H$. If $R\subsetneq V(H)$ such that $p_{KY}(R)=12$, then there exists a vertex $v\in V(J)$ and an edge $e=uv$ of $J$ such that $R$ is a subset of $V(H')\cup\{v\}$ where $H'$ is the bar of $e$.
\end{lemma}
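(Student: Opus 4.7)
The plan is to analyze $p_{KY}(R)$ using the decomposition of $H$ as an Ore-composition of the frame $J \cong K_4$ with bars $\{H'_e\}_{e\in E'}$, one for each replaced edge $e = u_e v_e \in E'$. Writing $B_e = V(H'_e) \setminus \{z_e\}$ (where $z_e$ is the split vertex of the bar), we have $V(H) = V(J) \sqcup \bigsqcup_e B_e$. For $R \subsetneq V(H)$, set $R_J = R \cap V(J)$ and $R^{H''}_e = R \cap (B_e \cup \{u_e, v_e\})$ for each $e \in E'$. A direct vertex/edge count yields the decomposition
\[
p_{KY}(R) = p^H_{KY}(R_J) + \sum_{e \in E'}\bigl(p_{KY}(R^{H''}_e) - 9|R_J \cap \{u_e, v_e\}|\bigr),
\]
where $p^H_{KY}(R_J) = 9|R_J| - 4|E^{\mathrm{unrep}}_J(R_J)|$ counts only the unreplaced edges of $J$ lying in $R_J$.

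Next I would bound each bar term $t_e := p_{KY}(R^{H''}_e) - 9|R_J \cap \{u_e, v_e\}|$. Translating $R^{H''}_e$ to the corresponding subset $R^{H'}_e$ of the 5-Ore graph $H'_e$ (identifying any present corners with $z_e$) gives $p_{KY}(R^{H''}_e) = p_{KY}(R^{H'}_e) + \Delta_e$, where $\Delta_e$ is an explicit nonnegative edge correction depending on how many of $u_e, v_e$ lie in $R$. Combined with Lemma~\ref{Coll5Ore} applied to the bar $H'_e$ (giving $p_{KY}(R^{H'}_e) \geq 9$ for every proper subset of $V(H'_e)$, and $p_{KY}(V(H'_e)) = 5$), one obtains the sharp lower bound $t_e \geq -4$, with equality only in the ``full bar'' configuration where $R^{H''}_e = V(H''_e)$ and both corners of $e$ lie in $R_J$.

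The proof then reduces to a case analysis on $|R_J|$. For each $|R_J| \geq 2$, an accounting of $p^H_{KY}(R_J)$ against the maximum negative contribution attainable from full bars on internal edges of $R_J$ shows that achieving $p_{KY}(R) = 12$ requires every internal replaced edge to be a full bar, which forces $R = V(H)$ and contradicts properness. Hence $|R_J| \leq 1$. When $|R_J| = 0$, the equation $\sum t_e = 12$ admits only the solution of a single bar $e$ with $t_e = 12$, yielding $R \subseteq B_e$. When $|R_J| = 1$ with corner $v$, the constraint $\sum t_e = 3$ forces exactly one bar $e$ incident to $v$ to have $R^{H''}_e$ induce a $K_4$ containing $v$, with all other bar contributions equal to zero. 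In either case $R \subseteq V(H'_e) \cup \{v\}$ for a suitable corner $v$ and edge $e = uv$ of $J$, proving the lemma.

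The main obstacle is the careful bookkeeping of the correction $\Delta_e$ and the resulting lower bounds on each $t_e$ across the various configurations of $R^{H''}_e \cap \{u_e, v_e\}$; in particular, verifying that no mixed combination of negative and positive bar contributions can achieve $p_{KY}(R) = 12$ for $|R_J| \geq 2$ short of the ``every internal bar full'' scenario. The identifiable pair hypothesis enters mainly through Proposition~\ref{ReplacedK5} to guarantee that the frame $J$ exists, so that the decomposition formula is available; the potential calculation itself does not require a further invocation of the hypothesis.
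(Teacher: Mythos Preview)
Your decomposition formula and the overall strategy—split $p_{KY}(R)$ along the frame, bound each bar contribution $t_e$, and do a case analysis on $|R_J|$—is exactly the paper's approach, carried out more explicitly. The bound $t_e\ge -4$ with equality only in the full-bar-with-both-corners configuration is correct and suffices to dispose of the cases $|R_J|\in\{2,3,4\}$ just as the paper does.

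There is, however, a real gap in the $|R_J|=1$ case, and it stems from your closing remark that ``the potential calculation itself does not require a further invocation of the hypothesis.'' That is not so. From Lemma~\ref{Coll5Ore} alone you only get $p_{KY}(S)\ge 9$ for proper subsets $S$ of a $5$-Ore graph, which yields $t_e\in\{0\}\cup[3,\infty)$ for a $c_e=1$ bar; but $t_e=0$ can occur with $R\cap B_e\neq\emptyset$, namely when $R^{H''}_e$ has at least five vertices and corresponds to a collapsible subset of potential $9$ in the ambient $5$-Ore graph $H^*$ obtained by cloning the special vertex. In that scenario $\sum t_e=3$ is satisfied with one bar at $3$ and another bar silently carrying a nontrivial piece of $R$ at contribution $0$, so $R$ is \emph{not} confined to a single bar and your conclusion fails. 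The no-identifiable-pair hypothesis is precisely what rules this out: a collapsible subset of $H^*$ with at least two boundary vertices yields an identifiable pair in a proper subset of $V(H)$, a contradiction. This is exactly why the paper invokes the hypothesis to upgrade the bar bound to $p_{KY}(R_e)\ge 12$ before running the case analysis; you need the same upgrade (equivalently, $t_e=0\Rightarrow R\cap B_e=\emptyset$ when $c_e=1$) for your argument to close.

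A minor point: in the $|R_J|=1$ case you assert that $R^{H''}_e$ must induce a $K_4$. You only get $p_{KY}(R^{H''}_e)=12$, which need not force $|R^{H''}_e|=4$; fortunately the lemma's conclusion only needs $R\subseteq B_e\cup\{v\}$, not the $K_4$ structure.
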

\begin{proof}
Suppose not. Let $X=V(J)\cap R$ and for every $e=ww'\in E(J)$, let $R_e = R\cap V(H_e)\cup \{w,w'\}$ where $H_e$ is the bar of $e$. Since there does not exist an identifiable pair in $R_e$, then by Lemma~\ref{Coll5Ore}, $p_{XY}(R_e)\ge 12$ if $|R_e|\ge 2$. Moreover, if both $w,w'$ in $R_e$, then $p_{KY}(R_e) \ge 14$ since $R_e\setminus \{w,w'\} \cup z$ is a subset of $H_e$ where $z$ is the identification of $w,w'$ in the bar $H_e$ and $H_e$ is $5$-Ore; if in addition $w,w'$ in $R_e$ and $R_e\ne H_e-z \cup \{w,w'\}$, then $p_{KY}(R_e) \ge 18$ by Lemma~\ref{Coll5Ore}.

Given these calculations, it now follows by summing $p(R_e)$ for all $e\in E(J)$ and subtracting the potential of overcounted vertices of $J$ that $p_{KY}(R) \ge 9|X| - 4 |E(J[x])| = p_{KY}(J[X])$. Since $p_{KY}(R) = 12$, we have that either $|X|=4, 1$ or $0$. First suppose $|X|=4$. Since $R$ is a proper subset of $H$, then for at least one edge $e$ in $J$, $R_e\ne H_e-z \cup \{w,w'\}$ and hence there is an additional $+4$ in the count above so that $p_{KY}(R) \ge 12 + 4 = 16$, a contradiction. Next suppose $|X|=1$ and let $X=\{v\}$. We may assume that $R-v$ intersects at least two bars of $J$ as otherwise we have a contradiction. But then $p_{XY}(R) \ge 2\cdot 12- 9 = 15$, a contradiction. So we may assume that $|X|=0$. Once again we may assume that $R$ intersects at least two bars of $J$, but then $p_{KY}(R) \ge 2\cdot 9 = 18$, a contradiction.
\end{proof}

\begin{lemma}\label{Size2}
Every component of $D_4(G)$ has size at most 2.
\end{lemma}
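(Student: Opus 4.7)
The plan is to suppose for contradiction that some component of $D_4(G)$ has size at least three, yielding distinct degree-4 vertices $u,v,w$ with $uv, vw \in E(G)$. I would apply Lemma~\ref{Special} to the edge $vw$ (taking $w$ as the deleted vertex) to obtain $v$ as a special vertex of an almost $5$-Ore subgraph $H \subseteq G - w$. Since Lemma~\ref{NoIdPairMin} rules out identifiable pairs in proper subsets, Proposition~\ref{ReplacedK5} applies and gives a frame $J \cong K_4$ of $H$ with corner set $\{v\} \cup N_H(v)$. Because $v$ has exactly three neighbors in $G - w$, namely $u$ and $v$'s two other $G$-neighbors $a, b$, the corners of $J$ are $\{v, u, a, b\}$; in particular $u$ is a corner of $J$ and is pairwise adjacent (possibly through bars) to $v, a, b$ in $H$.

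Next I would consider the subset $R = V(H) \cup \{w\}$ of $V(G)$. Since $K := H \cup \{v^\ast\}$ is $5$-Ore, $p_{KY}(H) = p_{KY}(K) + 7 = 12$, and Lemma~\ref{5OrePot} gives $T(H) \geq 1 + |V(H)|/4$. The edge count for $G[R]$ adds $k := |N_G(w) \cap V(H)|$ new edges over $H$ (with $k \geq 1$ since $vw$ is such an edge), so $p_G(R)$ is bounded by an expression that depends narrowly on $k$. Combined with $p(G) > 5 - P$, this makes $R$ close enough to the threshold of Lemma~\ref{PotCore4} that either we directly obtain a contradiction (if $p_G(R)$ is too small), or the lemma forces $G \setminus R$ to be a single degree-4 vertex, severely constraining the placement of the remaining vertex of $G$ and the embedding of $w$'s other neighbors within $V(H)$.

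Finally, I would extract a forbidden configuration by combining the above with a symmetric application of Lemma~\ref{Special} to the edge $vu$, which yields a second almost $5$-Ore subgraph $H'$ with $v$ special in $G - u$ and a frame $J' \cong K_4$ on corners $\{v, w, a', b'\}$. Comparing $J$ and $J'$, both anchored at $v$, pins $u$ and $w$ into matching roles: depending on whether $w$ lands as a corner of $J$ (requiring $u, w$ adjacent, hence a triangle), inside a specific bar (localized via Lemma~\ref{InTheBar}), or outside $V(H)$, the degree-4 constraint forces one of: $\{u, w\}$ having identical closed neighborhoods (contradicting Lemma~\ref{NoCluster2}), a $K_5 - e$ subgraph (contradicting Corollary~\ref{NoK5e}), an emerald (contradicting Lemma~\ref{Ungemmed}), or a diamond (contradicting Lemma~\ref{NoDiamond}). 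The main obstacle is the case analysis on where $w$ sits with respect to $J$ and the bar structure of $H$; in particular, ruling out $w$ lying in the interior of a bar will require careful use of Lemma~\ref{InTheBar} together with the fact that $w$'s neighbor $v$ is a corner of $J$ but lies outside any bar of $H$.
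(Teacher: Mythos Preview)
Your outline has the right opening moves but breaks down at the key step. You correctly obtain two almost $5$-Ore subgraphs $H \subseteq G - w$ and $H' \subseteq G - u$, both with special vertex $v$, and frames $J, J'$. But your middle paragraph, taking $R = V(H) \cup \{w\}$, does not control $p_G(R)$ well enough: if $w$ has only one neighbor in $V(H)$ (namely $v$), then $p_{KY}(R) = 12 + 9 - 4 = 17$, far above the threshold in Lemma~\ref{PotCore4}, so that lemma yields no information whatsoever. You cannot conclude that $G \setminus R$ is a single vertex in this case.

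Your final paragraph's case analysis is incoherent as written. Since $H \subseteq G - w$, the vertex $w$ is \emph{never} in $V(H)$; it cannot be a corner of $J$ or lie in a bar of $H$. So your trichotomy collapses to the single case ``$w$ outside $V(H)$,'' for which you only list hoped-for contradictions without a mechanism to produce any of them. Lemma~\ref{InTheBar} is a statement about subsets $R \subsetneq V(H)$ with $p_{KY}(R) = 12$; it says nothing about locating an external vertex like $w$.

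The idea you are missing is the one the paper uses: work with the \emph{intersection} $R = V(H) \cap V(H')$ and the \emph{union} $R' = V(H) \cup V(H')$, exploiting submodularity of $p_{KY}$. Since $u \notin V(H')$, the set $R$ is a proper subset of $V(H)$, and it contains all three corners $v, v_1, v_2$ of $J$ other than $u$. Lemma~\ref{InTheBar} then applies to $R$ inside $H$ and forces $p_{KY}(R) \ge 13$ (because $R$ cannot be confined to a single bar plus one corner while meeting three corners). Submodularity gives $p_{KY}(R') \le 12 + 12 - 13 = 11$, and combining this with the $T$-bound from Lemma~\ref{5OrePot} and Lemma~\ref{PotCore4} yields the contradiction. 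Your sketch never sets up this intersection, which is the heart of the argument.
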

\begin{proof}
We now prove that every component of $D_4(G)$ has size at most 2. Let $uvw$ be a path on three vertices in $D_4(G)$. By Lemma~\ref{Special}, $v$ is a special vertex of an almost $5$-Ore subgraph $H_1$ of $G-w$ and $v$ is a special vertex of an almost $5$-Ore subgraph $H_2$ of $G-u$. Since there does not exist an identifiable pair in $H_1$ by Lemma~\ref{NoIdPairMin}, there exists a frame $F_1$ of $H_1$ by Proposition~\ref{ReplacedK5}. By symmetry, there exists a frame $F_2$ of $H_2$. Note that $p_{KY}(H_1)=p_{KY}(H_2)=12$. 

Note that $u\not\in V(H_2)$ and $w\not\in V(H_1)$. However, the other two neighbors of $v$, call them $v_1$ and $v_2$, are in both $H_1$ and $H_2$ since $d_{H_1}(v), d_{H_2}(v)\ge 4$. Moveover, $v$ is in both $F_1$ and $F_2$. Let $V(F_1)=\{u,v,v_1',v_2'\}$. Since $G$ does not have an identifiable pair by Lemma~\ref{NoIdPairMin}, it follows that either $v$ has two neighbors in the bar of $vv_1'$ or that $v_1'$ is a neighbor of $v$. Since $v$ has degree four, it follows that $v_1'$ is a neighbor of $v$ and similarly $v_2'$ is a neighbor of $v$. Hence $v_1,v_2$ are each in both of $V(F_1)$ and $V(F_2)$.

Let $R=V(H_1)\cap V(H_2)$. Since there does not exist an identifiable pair in $G$ by Lemma~\ref{NoIdPairMin}, $R$ is not collapsible in $H_1$ and hence by Lemma~\ref{Coll5Ore} applied to the $5$-Ore graph obtained from $H_1$ by cloning $v$, $p_{KY}(R)\ge 12$. Since $w\not\in V(H_1)$, $R$ is a proper subset of $V(H_1)$. But now applying Lemma~\ref{InTheBar} to $H_1, F_1$ and $R$ we find that $p_{KY}(R) \ge 13$ since $R\cap V(F_1) = \{v,v_1,v_2\}$. 


Let $R'=V(H_1)\cup V(H_2)$. Note that $|R|\ge 5$. Now $p_{KY}(R')\le p_{KY}(H_1)+p_{KY}(H_2)-p_{KY}(R) \le 12+12-13 = 11$. By Lemma~\ref{5OrePot}, it follows that for $i\in\{1,2\}$, $T(H_i')\ge 2+\frac{|V(H_i')|-1}{4}$. Thus for each $i\in\{1,2\}$, $T(H_i)\ge 1+\frac{|V(H_i)|}{4}$. Yet $|V(H_1)|+|V(H_2)|\ge |R'|+1$. Thus there exists $i$ such that $|V(H_i)|\ge \frac{|R'|+1}{2}$. Hence $T(R')\ge \max\{T(H_1),T(H_2)\} \ge 1 + \frac{|R'|+1}{8}$. 

So $p(R')\le p_{KY}(R') + \epsilon|R'| - \delta(1+\frac{|R'|+1}{8}) \le p_{KY}(R') -9\epsilon \le 11 - 9\epsilon$ since $\delta \ge 8 \epsilon$. If $R'\ne V(G)$, then by Lemma~\ref{PotCore4}, $p(R') > p(G) + 7 + \delta + 3\epsilon + Q$ unless $G\setminus R'$ is a single vertex of degree four. The former case implies that $p(G) \le 4-\delta - 12\epsilon - Q \le 5-P$ since $P < 1+\delta + Q +12\epsilon$ as $\epsilon < 1/20$, contradicting that $p(G) > 5-P$. So suppose the latter case. Then $p(G) \le p(R') + (9+\epsilon) - 4(4) \le p_{KY}(R') - 7 - 8 \epsilon$. Yet $p_{KY}(R') = p_{KY}(G) + 7$ since $G\setminus R'$ is a single vertex of degree four. Hence $p(G) \le p_{KY}(G) - 8\epsilon$. Since $G$ is not $5$-Ore, then $p_{KY}(G)\le 2$ by Theorem~\ref{Ore5EqualityP}. Hence $p(G) \le 2-8\epsilon$ contradicting that $p(G) > 5-P$ since $P < 3+8\epsilon$.

So we may assume that $R'=V(G)$. But then $p_{KY}(R')=p_{KY}(G)\le 2$ by Theorem~\ref{Ore5EqualityP} since $G$ is not $5$-Ore. Hence $p(R)=p(G)\le p_{KY}(R')-9\epsilon\le 2 -9\epsilon$, contradicting that $p(G) > 5-P$ since $P < 3+9\epsilon$.
\end{proof}

\begin{lemma}\label{Deg5OnlyOneMatched4}
If $v$ is a vertex of degree $5$ in $G$, then $v$ has at most one neighbor of degree $4$ that is incident with an edge of $D_4(G)$.
\end{lemma}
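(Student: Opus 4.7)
My plan is to assume for contradiction that $v$ has degree $5$ with two distinct degree-$4$ neighbors $u_1, u_2$, each incident to an edge of $D_4(G)$, and to derive a contradiction by imitating the structure of the proof of Lemma~\ref{Size2}. By Lemma~\ref{Size2}, each $u_i$ lies in a component of $D_4(G)$ of size exactly $2$, so there is a unique degree-$4$ neighbor $w_i$ of $u_i$; moreover $w_i \neq v$ because $v$ has degree $5$. The only annoying case is $w_1 = u_2$ (equivalently $w_2 = u_1$), which I would absorb into the general argument by simply taking $w_i = u_{3-i}$ whenever $u_1 u_2 \in E(D_4(G))$ and otherwise noting that $w_1, w_2, u_1, u_2, v$ are five distinct vertices.

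For each $i \in \{1,2\}$, I would apply Lemma~\ref{Special} to the edge $u_i w_i$ to obtain an almost $5$-Ore subgraph $H_i$ of $G - w_i$ with special vertex $u_i$, and then Proposition~\ref{ReplacedK5} (valid by Lemma~\ref{NoIdPairMin}) to obtain a frame $F_i$ whose corner set is $\{u_i\} \cup N_{H_i}(u_i) = \{u_i, v, a_i, b_i\}$, where $a_i, b_i$ are the two neighbors of $u_i$ in $G$ other than $v$ and $w_i$. Paralleling the identifiable-pair analysis from the proof of Lemma~\ref{Size2}, I would show that for each frame edge of $F_i$ incident to $v$, either the edge is not replaced (so its other endpoint is a genuine neighbor of $v$ in $G$) or $v$ has at least two neighbors in the corresponding bar. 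Since $v$ has only five neighbors in $G$, two of which are $u_1, u_2$, this budget tightly restricts how many $F_i$-edges incident to $v$ can be replaced and forces significant overlap between $V(H_1)$ and $V(H_2)$.

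The heart of the proof is to bound $R := V(H_1) \cap V(H_2)$ from below in $p_{KY}$. Cloning $u_1$ in $H_1$ yields a $5$-Ore graph in which $R$ is a proper, non-collapsible subset (non-collapsibility following from Lemma~\ref{NoIdPairMin}), so Lemma~\ref{Coll5Ore} gives $p_{KY}(R) \ge 12$. I would then apply Lemma~\ref{InTheBar} to bump this to $p_{KY}(R) \ge 13$ by showing that $R$ contains enough corners of $F_1$ (namely $v$ together with the forced shared neighbors of $v$) so that $R$ cannot fit inside a single bar of $F_1$. Setting $R' := V(H_1) \cup V(H_2)$, this yields $p_{KY}(R') \le p_{KY}(H_1) + p_{KY}(H_2) - p_{KY}(R) \le 12 + 12 - 13 = 11$, and Lemma~\ref{5OrePot} applied to each $H_i$ gives $T(R') \ge 1 + \frac{|R'|+1}{8}$, so that $p(R') \le 11 - 9\epsilon$ (using $\delta \ge 8\epsilon$).

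Finally, if $R' \subsetneq V(G)$, Lemma~\ref{PotCore4} forces $G \setminus R'$ to be a single degree-$4$ vertex, and a direct potential count contradicts $p(G) > 5 - P$; otherwise $R' = V(G)$, and since $G$ is not $5$-Ore we invoke Theorem~\ref{Ore5EqualityP} to conclude $p_{KY}(G) \le 2$, again contradicting $p(G) > 5 - P$ once $P$ is small enough. The main obstacle, compared to Lemma~\ref{Size2}, will be the degree-$5$ flexibility of $v$: in that lemma, $v$ had degree $4$ so its three non-$w$ neighbors were forced to equal the three other corners of each frame, guaranteeing a common triple $\{v, v_1, v_2\} \subseteq V(F_1) \cap V(F_2)$. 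Here $v$ has an extra neighbor that may sit inside a bar of one frame but not the other, so the argument requires a careful split by how many $F_i$-edges at $v$ are replaced and by whether $a_i, b_i$ coincide with $u_{3-i}$ or with the third, fourth, fifth neighbors of $v$.
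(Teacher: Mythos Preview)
Your overall skeleton is exactly the paper's: set up $H_1, H_2$ via Lemma~\ref{Special}, prove $p_{KY}(R) \ge 13$ for $R = V(H_1) \cap V(H_2)$, deduce $p_{KY}(R') \le 11$, bound $T(R')$ via Lemma~\ref{5OrePot}, and finish with Lemma~\ref{PotCore4} and Theorem~\ref{Ore5EqualityP}. That final stretch is fine. The genuine gap is in the heart step, your proposed route to $p_{KY}(R) \ge 13$. You want to invoke Lemma~\ref{InTheBar} by exhibiting two corners of $F_1$ in $R$, but the corners of $F_1$ are $\{u_1, v, a_1, b_1\}$ where $a_1, b_1 \in N_G(u_1)$, not $N_G(v)$; there is no reason ``forced shared neighbors of $v$'' should be corners of $F_1$ at all, and there is no a priori reason $u_1, a_1, b_1 \in V(H_2)$. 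So corner-counting, as stated, does not get off the ground.

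The paper's argument for $p_{KY}(R) \ge 13$ is substantially different and more delicate. First it shows $|N(v) \cap R| \ge 3$ by a potential trick: if $|N(v)\cap R|\le 2$ then $p_{KY}(R-v)\le 11$, forcing $R-v$ collapsible in the cloned $5$-Ore graph and contradicting Lemma~\ref{NoIdPairMin}. Then it splits on whether $u_1 \in V(H_2)$ and $u_2 \in V(H_1)$. If neither holds, Lemma~\ref{InTheBar} confines $R-v$ to a single bar of $F_1$, and now a \emph{degree count at $v$} (the two neighbors $u_1, u_2$ outside $R$, three neighbors inside $R$, plus a neighbor in or at the other bar) forces $d(v)\ge 6$, a contradiction. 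If, say, $u_2 \in V(H_1)$, the paper does something you do not anticipate: it locates $u_2$ inside a bar of $F_1$, applies Proposition~\ref{K4Frame} \emph{inside that bar} to produce a third almost $5$-Ore subgraph $H_3$ with special vertex $u_2$ and frame $F_3$, and then argues that $V(F_2)\cup V(F_3)=N_G(u_2)\cup\{u_2\}$ forces $v, w_2$ to be an identifiable pair in $V(H_2)\cup V(H_3)\subsetneq V(G)$, contradicting Lemma~\ref{NoIdPairMin}. Your proposed case split (on how many frame edges at $v$ are replaced) does not capture this second case; the obstruction is not local to $v$'s incidences in $F_i$, but rather the global possibility that one special vertex lies deep inside the other almost $5$-Ore graph.
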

\begin{proof}
Suppose not and let $u,w$ be neighbors of $v$ of degree four incident with an edge of $D_4(G)$. Let $u',w'$ be the other ends of those edges respectively (note that $u$ and $w$ may be adjacent in which case $u'=w$ and $w'=u$). By Lemma~\ref{Size2}, $u$ is in an almost $5$-Ore graph $H_1$ not containing $u'$ and $w$ is in an almost $5$-Ore graph $H_2$ not containing $w'$. As $u$ has degree three in $H_1$, $v\in V(H_1)$ and similarly $v\in V(H_2)$. Also, $p_{KY}(H_1)=p_{KY}(H_2)=12$. Let $R=V(H_1)\cap V(H_2)$. 

\begin{claim}\label{13}
$p_{KY}(R) \ge 13$.
\end{claim}
\begin{proof}
Suppose not.  So we may suppose that $p_{KY}(R)\le 12$. Note that $v\in V(H_1)\cap V(H_2)$. Note that $d_{H_1}(v), d_{H_2}(v) \ge 3$ and hence $N(v) \cap R \ne \emptyset$. 

Suppose $|N(v)\cap R| \le 2$. So $p_{KY}(R - v) \le p_{KY}(R) -9+4(2) = p_{KY}(R) - 1 = 12 - 1 = 11$. But then by Lemma~\ref{Coll5Ore}, $R - v$ is collapsible in the $5$-Ore graph obtained from $H_1$ by cloning $u$ and so $R-v$ has an identifiable pair contradicting Lemma~\ref{NoIdPairMin}. Thus we may assume that $|N(v)\cap R|\ge 3$. Similarly by Lemma~\ref{Coll5Ore}, it follows that $p_{KY}(R) = 12$ by Lemma~\ref{Coll5Ore}.

By Lemma~\ref{ReplacedK5}, there exists a frame $F_1$ of $H_1$ with special vertex $u$ and similarly there exists a frame $F_2$ of $H_2$ with special vertex $w$. 

We consider two cases. First suppose that both $u\not\in V(H_2)$ and $w\not\in V(H_1)$. Hence $u,w\not\in R$. Note that $v\in V(F_1)\cap V(F_2)$. Since $G$ does not have an identifiable pair by Lemma~\ref{NoIdPairMin}, then by Lemma~\ref{InTheBar} applied to $R$ and $F_1$ we find that there exists $e\in E(F_1)$ such that $R-v$ is contained in the bar of $e$ in $F_1$. Let $V(F_1) = \{u,v,v_1,v_2\}$. We may assume without loss of generality that $e=vv_1$. However, then $d(v)\ge 6$ since $v$ is adjacent to both $u$ and $w$, has three neighbors in $R$, and is either adjacent to $v_2$ or has a neighbor in the bar of $vv_2$. This is a contradiction since $v$ has degree five in $G$.

So we may assume without loss of generality that $w\in V(H_1)$ and hence $w\ne u'$ so that $w$ is not adjacent to $u$. Since $V(F_1)=N_G(u)\cup\{u\}-u'$, we find that $w\not\in V(F_1)$. But then $d_{H_1}(w) = 4$ and hence $N_G(w) \subseteq V(H_1)$. In particular, $w'\in V(H_1)$ and hence $R$ is a proper subset of $V(H_1)$ as $w'\not\in R$. Since $G$ does not have an identifiable pair by Lemma~\ref{NoIdPairMin}, then by Lemma~\ref{InTheBar} applied to $R$ and $F_1$ we find that there exists $e\in E(F_1)$ such that $R-v$ is contained in the bar of $e$ in $F_1$. Let $V(F_1) = \{u,v,v_1,v_2\}$. We may assume without loss of generality that $e=vv_1$. Note then that $u\not \in R$ and hence $u\not\in V(H_2)$.

Let $H'$ be the bar of $e$ and let $z$ denote the vertex that is the identification of $v$ and $v_1$. Recall that $H'$ is $5$-Ore and note that $wz\in E(H')$. Further note that $w$ has degree four in $H'$ and that there does not exist an identifiable pair in $H'-z$ since there does not exist an identifiable pair in $G$ by Lemma~\ref{NoIdPairMin}. Now, by Proposition~\ref{K4Frame} applied to $H'$, $w$ and $z$, we find that there exists an almost $5$-Ore subgraph $H_3$ of $H'-z$ with special vertex $w$ and frame $F_3$.

Now note that $V(F_2) = N_G(w)\cup\{w\}-w'$ and $V(F_3) = N_G(w)\cup\{w\} - v$. Now consider $S=V(H_2)\cup V(H_3)$. Note that $S\ne V(G)$ since $u\not \in S$ as $u\not\in V(H_3)$ since $H_3\subseteq H'-z$ and $u\not\in V(H_2)$ as noted above. We claim that $w',v$ is an identifiable pair in $S$, contradicting Lemma~\ref{NoIdPairMin}. Suppose not and let $\phi$ be a $4$-coloring of $S+vw'$. To see this, note that in every $4$-coloring of an almost $5$-Ore subgraph with a frame, the corners of the frame must receive different colors. Now $\phi$ induces a $4$-coloring of $H_2$ and hence $V(F_2)$ all receive different colors in $\phi$. Similarly, $\phi$ induces a $4$-coloring of $H_3$ and hence $V(F_3)$ all receive different colors in $\phi$. Since $\phi(v)\ne \phi(w')$, we find that all of $V(F_2)\cup V(F_3) = N_G(w) \cup \{w\}$ receive different colors in $\phi$, a contradiction since $\phi$ is a $4$-coloring and $w$ $|V(F_2)\cup V(F_3)|=5$. This proves the claim that $w',v$ is an identifiable pair and so concludes our proof.
\end{proof}

Let $R'=V(H_1)\cup V(H_2)$. Since $p_{KY}(R)\ge 13$ by Claim~\ref{13}, then $p_{KY}(R') \le p_{KY}(H_1) + p_{KY}(H_2) - p_{KY}(R) \le 12 + 12 - 13 = 11$.

By Lemma~\ref{5OrePot}, it follows that for $i\in\{1,2\}$, $T(H_i)\ge 1+\frac{|V(H_i)|}{4}$. Yet $|V(H_1)|+|V(H_2)|\ge |R'|+1$. Thus there exists $i$ such that $|V(H_i)| \ge \frac{|R'|+1}{2}$. Hence $T(R')\ge \max\{T(H_1),T(H_2)\} \ge 1+\frac{|R'|+1}{8}$. So $p(R')\le p_{KY}(R') + \epsilon|R| - \delta(1+ \frac{|R|+1}{8}) \le p_{KY}(R') -9\epsilon \le 11 - 9\epsilon$ since $\delta\ge 8\epsilon$. 

If $R'\ne V(G)$, then by Lemma~\ref{PotCore4}, $p(R') > p(G) + 7 + \delta + 3\epsilon + Q$ unless $G\setminus R'$ is a single vertex of degree four. The former case implies that $p(G) \le 4-\delta - 12\epsilon - Q$, contradicting that $p(G) > 5-P$ since $P\le 1+\delta+Q+12\epsilon$ and $\epsilon < 1/20$. So suppose the latter case. Then $p(G) \le p(R') + (9+\epsilon) - 4(4) \le p_{KY}(R') - 7 - 8 \epsilon$. Yet $p_{KY}(R') = p_{KY}(G) + 7$ since $G\setminus R'$ is a single vertex of degree four. Hence $p(G) \le p_{KY}(G) - 8\epsilon$. Since $G$ is not $5$-Ore, then $p_{KY}(G)\le 2$ by Theorem~\ref{Ore5EqualityP}. Hence $p(G) \le 2-8\epsilon$ contradicting that $p(G) > 5-P$ since $P < 3+8\epsilon$.

So we may assume that $R'=V(G)$. But then $p_{KY}(R')=p_{KY}(G)\le 2$ by Theorem~\ref{Ore5EqualityP} since $G$ is not $5$-Ore. Hence $p(R')=p(G)\le p_{KY}(R') - 9\epsilon \le 2 - 9\epsilon$ from above, contradicting that $p(G) > 5-P$ since $P < 3+9\epsilon$.

\end{proof}

\section{Discharging}

In this section, we prove Theorem~\ref{Main}. We will need the following theorem of Kierstead and Rabern~\cite{KiersteadRabern}, but first a definition.

\begin{definition}
The \emph{maximum independent cover number} of a graph $G$, denoted ${\rm mic}(G)$, is the maximum of $\sum_{v\in I}d(v)$ over all
independent sets $I$ of $G$.
\end{definition}

\begin{theorem}\label{mic}
If $G$ is a $k$-critical graph, then

$$|E(G)|\ge \frac{k-2}{2} |V(G)| + \frac{1}{2} {\rm mic}(G).$$
\end{theorem}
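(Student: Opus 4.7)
The plan is to reduce the inequality $2|E(G)| \ge (k-2)|V(G)| + \mathrm{mic}(G)$ to a statement about degree sums and then close the gap by exploiting both $k$-criticality and the extremal property of the maximizing independent set. Let $I\subseteq V(G)$ be an independent set realizing $\mathrm{mic}(G)=\sum_{v\in I} d_G(v)$. Since $I$ is independent, every edge incident to $I$ crosses to $V(G)\setminus I$, so the number of such crossing edges equals $\mathrm{mic}(G)$. Hence
\[
2|E(G)| \;=\; \sum_{v\in V(G)} d_G(v) \;=\; \mathrm{mic}(G)\;+\;\sum_{v\notin I} d_G(v),
\]
and the claimed bound is equivalent to $\sum_{v\notin I} d_G(v)\ge (k-2)|V(G)|$, which is the statement I would target.

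Two consequences of the hypotheses drive the argument. First, as $G$ is $k$-critical, the minimum degree satisfies $\delta(G)\ge k-1$. Second, the mic-maximality of $I$ yields the \emph{swap inequality}: for every $v\notin I$, the set $(I\setminus N(v))\cup\{v\}$ is again independent, so $\sum_{u\in N(v)\cap I} d_G(u)\ge d_G(v)$; otherwise swapping would strictly increase mic. I would then set up a discharging scheme in which each vertex starts with charge $d_G(v)-(k-2)$ (total charge $2|E(G)|-(k-2)|V(G)|$), and each $v\notin I$ ships charge across its edges to $I$ so that every $u\in I$ ends with charge at least $d_G(u)$ while every $v\notin I$ ends with non-negative charge. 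Summing the final charges then gives $\sum_{v\in I} d_G(v)=\mathrm{mic}(G)$ on the $I$-side plus a non-negative remainder, which is exactly the desired inequality.

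For the discharging rule itself, I would send, along each crossing edge $uv$ with $u\in I$, a fraction $\tfrac{(k-2)d_G(u)}{\sum_{u'\in N(v)\cap I} d_G(u')}$ of a unit of charge from $v$ to $u$. Under this rule each $v\notin I$ ships out exactly $k-2$ units of charge, and each $u\in I$ accumulates exactly $k-2$ units once one uses the swap inequality to control the denominator, yielding a final charge of $d_G(u)$ at every $u\in I$. The recovery of the claimed inequality is then just the total charge identity.

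The main obstacle is verifying non-negativity of the final charge at low-degree vertices of $V\setminus I$: a vertex $v\notin I$ with $d_G(v)=k-1$ starts with only $1$ unit and may have several neighbors in $I$. Here the mic-max swap inequality $\sum_{u\in N(v)\cap I} d_G(u)\ge d_G(v)=k-1$ is decisive, since together with $d_G(u)\ge k-1$ for $u\in I$ it constrains either the number of $I$-neighbors of $v$ or how much they individually absorb, allowing the weighted transfers to stay within $v$'s budget of $d_G(v)-(k-2)$. Establishing this balancing cleanly, perhaps by splitting $V\setminus I$ into degree classes and handling the minimum-degree class separately using the swap inequality directly, is the technical heart of the proof.
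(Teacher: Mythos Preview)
The paper does not supply a proof of this theorem; it is quoted from Kierstead and Rabern and used as a black box in the discharging of Section~6. So there is no ``paper's proof'' to compare against. Nevertheless, your attempted proof has a genuine gap, and in fact the strategy cannot succeed as written.

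Your reduction to $\sum_{v\notin I} d_G(v)\ge (k-2)|V(G)|$ is correct, and the swap inequality is valid. The problem is that from that point on you use only two facts: $\delta(G)\ge k-1$ and the mic-maximality of $I$. These alone do not imply the target inequality. Take $G=K_{k-1,k-1}$ with bipartition $(A,B)$ and $I=A$: then $\delta(G)=k-1$, $I$ is mic-maximal (every maximal independent set is one of the sides), and the swap inequality holds, yet $\sum_{v\notin I}d_G(v)=(k-1)^2<2(k-1)(k-2)=(k-2)|V(G)|$ for $k\ge 4$. So any correct proof must exploit $k$-criticality beyond the minimum-degree consequence; Kierstead and Rabern do this via a list-colouring argument on $G-I$, which your outline never invokes.

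Concretely, both halves of your discharging break. On the $I$-side, the swap inequality bounds the denominator $\sum_{u'\in N(v)\cap I}d_G(u')$ from \emph{below}, so it gives an \emph{upper} bound on what $u\in I$ receives, not the lower bound you need to reach final charge $d_G(u)$. On the $V\setminus I$-side, a vertex $v$ of degree $k-1$ starts with charge $1$ but your rule forces it to ship out exactly $k-2$, leaving charge $3-k<0$; the swap inequality $\sum_{u'\in N(v)\cap I}d_G(u')\ge k-1$ combined with $d_G(u')\ge k-1$ is already satisfied by a single $I$-neighbour and places no constraint whatsoever on $|N(v)\cap I|$, so it cannot rescue the budget. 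The $K_{k-1,k-1}$ example makes this failure explicit: every $v\in B$ has all $k-1$ neighbours in $I$, and no reweighting of transfers along those edges can make both sides non-negative simultaneously.
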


Let $G$ be a minimum counterexample to Theorem~\ref{Main} as in the previous section. We proceed by discharging. Let the charge of a vertex $v$, denoted $ch(v)$ be given by:

$$ch(v) = (9+\epsilon) - 2d(v).$$

We now discharge according to the following rule to obtain a new charge, denoted $ch_F(v)$.\\

{\bf Discharging Rule:} If $v$ is a vertex of degree at least 5 with a neighbor $u$ of degree four in a component of $D_4(G)$ of size at least two, then $v$ receives $+1/4$ charge from $u$.\\

\begin{lemma}
If $v$ has degree at least $5$, then $ch_F(v)\le -\frac{3}{4} + \epsilon$.
\end{lemma}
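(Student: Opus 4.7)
The plan is to compute $ch_F(v)$ by cases on $d(v)$, using Lemma~\ref{Deg5OnlyOneMatched4} only for the tight case $d(v)=5$ and a trivial bound otherwise. Recall that $v$ receives at most $1/4$ from each neighbor of degree $4$ that lies in a component of $D_4(G)$ of size at least two, so the total charge received by $v$ is at most $\frac{1}{4}$ times the number of such neighbors, which is in turn at most $d(v)$.

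First I would handle $d(v)=5$. The initial charge is $ch(v)=9+\epsilon-10=-1+\epsilon$. By Lemma~\ref{Deg5OnlyOneMatched4}, $v$ has at most one neighbor of degree $4$ incident with an edge of $D_4(G)$, i.e., at most one neighbor in a component of $D_4(G)$ of size at least two. Hence $v$ receives at most $1/4$, giving $ch_F(v)\le -1+\epsilon+\tfrac14=-\tfrac34+\epsilon$, which is exactly the claimed bound (and is tight). This is the main step, and Lemma~\ref{Deg5OnlyOneMatched4} is the essential input — without it we would get $ch_F(v)\le 9+\epsilon-\tfrac{35}{4}=\tfrac14+\epsilon$, which is too weak.

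For $d(v)\ge 6$, I would bound crudely: $v$ receives at most $\tfrac14 d(v)$ total, so
\[
ch_F(v)\le (9+\epsilon-2d(v))+\tfrac14 d(v)=9+\epsilon-\tfrac{7}{4}d(v).
\]
When $d(v)=6$ this gives $ch_F(v)\le -\tfrac32+\epsilon$, and the right-hand side is decreasing in $d(v)$, so for every $d(v)\ge 6$ we get $ch_F(v)\le -\tfrac32+\epsilon\le -\tfrac34+\epsilon$, as required.

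Combining the two cases completes the proof. The only nontrivial step is the degree-$5$ case, where Lemma~\ref{Deg5OnlyOneMatched4} is exactly tailored to make the inequality hold with equality; the higher-degree cases are immediate from the discharging rule.
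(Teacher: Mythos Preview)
Your proof is correct and follows essentially the same approach as the paper: split into the cases $d(v)=5$ (where Lemma~\ref{Deg5OnlyOneMatched4} limits the received charge to $1/4$) and $d(v)\ge 6$ (where the crude bound $ch_F(v)\le 9+\epsilon-\tfrac{7}{4}d(v)$ suffices). Your computation for $d(v)\ge 6$ is in fact slightly sharper than what the paper records, but the argument is the same.
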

\begin{proof}
If $v$ has degree $5$, then $ch(v)=-1+\epsilon$. By Lemma~\ref{Deg5OnlyOneMatched4}, $v$ receives charge from at most one neighbor. Hence $ch_F(v) \ge -1 + \epsilon + 1/4 = \epsilon-\frac{3}{4}$ as desired.

Suppose then that $v$ has degree at least 6. Now, $ch(v) = (9+\epsilon)-2d(v)$ and $v$ receives at most $+1/4$ charge from each neighbor. Hence $ch_F(v) \le (9+\epsilon)-2d(v) + \frac{d(v)}{4} = 9+\epsilon -\frac{7}{4}d(v)$. As $d(v) \ge 6$, this is at most $-1+\epsilon$ as desired. 
\end{proof}

However, if $v$ has degree four and is in a component of size two of $D_4(G)$, then $ch_F(v)= \frac{1}{4} + \epsilon$. Meanwhile if $v$ is degree four and in a component of size 1 of $D_4(G)$, then $ch_F(v)=1+\epsilon$. Let $S$ be the number of components of size one in $D_4(G)$ and $M$ be the number of components of size two in $D_4(G)$. Hence the number of vertices of degree four is $S+2M$ and the number of vertices of degree at least five is $|V(G)|-S-2M$. Note that there is an independent set consisting of vertices of degree four of size at least $S+M$. Hence ${\rm mic}(G)\ge 4(S+M)$. Thus by Theorem~\ref{mic}

$$|E(G)|\ge \frac{3}{2}|V(G)|+ 2(S+M).$$

As $p(G) > 0$, $(9+\epsilon)|V(G)| > 4|E(G)|$. Hence $(9+\epsilon)|V(G)| > 6|V(G)| + 8(S+M)$. Thus 

$$S+M < \frac{3+\epsilon}{8}|V(G)|.$$

On the other hand, 

$$\sum_v ch(v) = (9+\epsilon)|V(G)| - 2\sum_v d(v) = (9+\epsilon)|V(G)|-4|E(G)| \ge p(G) > 0.$$

Hence $\sum_v ch_F(v) > 0$. Yet, $\sum_v ch_F(v) \le -\frac{3}{4}(|V(G)|-S-2M) + S + \frac{M}{4} + \epsilon |V(G)|$. Thus,

$$\frac{7}{4}(S + M) > \left(\frac{3}{4} - \epsilon\right)|V(G)|.$$

So on the one hand, $S+M > \frac{4}{7}(\frac{3}{4}-\epsilon)|V(G)|$ and on the other $S+M < \frac{3+\epsilon}{8}|V(G)|$. That is,

$$ \frac{3+\epsilon}{8}|V(G)| > \frac{3-4\epsilon}{7}|V(G)|.$$

Hence, $21+7\epsilon > 24-32\epsilon$. That is, $39\epsilon > 3$. So $\epsilon > \frac{1}{13}$, a contradiction.

\end{document}